\numberwithin{equation}{section}
\theoremstyle{definition}
\newtheorem{theorem}[equation]{Teorema}
\newtheorem{definition}[equation]{Defini\c c\~ao}
\newtheorem{remark}[equation]{Observa\c c\~ao}
\newtheorem{example}[equation]{Exemplo}
\theoremstyle{definition}
\newtheorem{lem}[equation]{Lema}
\newcommand{\R}{\mathbb{R}}
\newcommand{\Sp}{\mathbb{S}}
\newcommand{\T}{\mathbb{T}}
\newcommand{\B}{\mathcal{B}}
\begin{document}


\title[Uma prova elementar da f\'ormula de Euler usando o m\'etodo de Cauchy] {Uma prova elementar da f\'ormula de Euler \\ usando o m\'etodo de Cauchy} 

\author{Jean-Paul \textsc{Brasselet} e Nguy\~\ecircumflex n Th\d{i} B\'ich Th\h{u}y}
\address{CNRS I2M e Aix-Marseille Universit\'e, Marseille, France.}
\email{jean-paul.brasselet@univ-amu.fr}

\address{UNESP, Universidade Estadual Paulista, ``J\'ulio de Mesquita Filho'', S\~ao Jos\'e do Rio Preto, Brasil}
\email{bich.thuy@unesp.br}

\begin{abstract}
O uso do m\'etodo de Cauchy para provar a conhecida f\'ormula
 de Euler \'e objeto de 
muitas controv\'ersias.
O objetivo deste artigo \'e mostrar que o m\'etodo de Cauchy se aplica 
para poliedros convexos e n\~ao somente para eles, 
mas tamb\'em para 
superf\'\i cies como o toro, o plano projetivo, a garrafa de Klein e  o toro pin\c cado.
\end{abstract}

\maketitle
 
\section*{Introdu\c c\~ao}

A f\'ormula ``de Euler'' diz que para qualquer poliedro convexo, a soma alternativa
\begin{equation}\label{carEuler}
n_0 - n_1 + n_2, 
\tag{1}
\end{equation}
vale 2, onde os n\'umeros $n_i$ est\~ao respectivamente o n\'umero de v\'ertices $n_0$, 
o n\'umero de arestas $n_1$ e o n\'umero de pol\'igonos (de dimens\~ao 2) $n_2$. 
H\'a muitas controv\'ersias sobre a paternidade da f\'ormula, 
tamb\'em sobre quem deu a primeira prova correta. 
Neste artigo, vamos investigar a hist\'oria  da f\'ormula e a sua prova pelo m\'etodo de Cauchy. 

 Na primeira se\c c\~ao do artigo, introduzimos as no\c c\~oes de poliedro,  
triangula\c c\~ao, representa\c c\~ao planar, assim que g\^enero e orienta\c c\~ao. 
Na se\c c\~ao \ref{histoire},  providenciamos precisamente 
alguns elementos sobre a hist\'oria da f\'ormula assim que 
sobre a primeira prova topol\'ogica dada por Cauchy. Alguns autores criticam a prova de Cauchy, 
dizendo que a prova necessita resultados profundos da topologia que foram provados depois 
da \'epoca de Cauchy: 
``{\it N\~ao se pode, portanto, esperar obter uma demonstra\c c\~ao elementar do Teorema 
 de Euler, com a hip\'otese de que o poliedro \'e homeomorfo a uma esfera,
  como fazem Hilbert-Cohn Vossen e Courant-Robbins}'' (Lima, \cite{Li2}). 
 Observe que a prova dada por Hilbert-Cohn Vossen \cite{HC} 
e Courant-Robbins \cite{CR}  \'e a prova de Cauchy. 

Na se\c c\~ao \ref{OTeo}, mostramos que, com uma t\'ecnica de ``elongamento'' e o uso somente 
de sub-triangula\c c\~oes,  a prova de Cauchy funciona sem usar os demais resultados.
 Mais precisamente, considere um pol\'igono triangulado no plano, com poss\'iveis identifica\c c\~oes 
 dos simplexos na sua borda,
 provamos que a diferen\c ca entre as somas (\ref{carEuler}) do pol\'igono e da sua borda 
 vale $+1$ (Teorema \ref {teo1}). A ideia de 
nossa prova \'e, a partir do buraco formado pela remo\c c\~ao de um tri\^angulo, 
estendemos o buraco por po\c cas sucessivas.
O processo \'e ilustrado pela constru\c c\~ao de uma pir\^amide adequada.
Uma conseq\"u\^encia direta do teorema \'e uma prova elementar da f\'ormula de Euler
usando apenas o m\'etodo de Cauchy. 

Na se\c c\~ao \ref{applications}, nosso prop\'osito \'e mostrar que 
a prova de Cauchy vale com ferramentas conhecidas 
do tempo de Cauchy n\~ao somente para a esfera mas tamb\'em para superf\'icies 
triangul\'aveis como o toro, o toro de genero 2 e 3, 
o plano projetivo, a garrafa de Klein e at\'e o toro pin\c cado, que \'e uma superf\'icie singular. 

Para ser completamente honesto, 
na parte de aplica\c c\~oes (se\c c\~ao \ref{applications}), exceto o caso da esfera, 
usamos tamb\'em a ideia de
``cortar'' superf\'icies que, em geral, foi introduzida por Alexander Veblen em um semin\'ario em 1915
(veja \cite{Bra}). Obviamente, pode-se perguntar por que n\~ao aplicamos o teorema \ref {teo1} e o
racioc\'inio para mostrar o resultado para todas as superf\'icies (suaves) orient\'aveis e n\~ao  
orient\'aveis.  A raz\~ao \'e
 muito simples: queremos fornecer provas, como foi poss\'ivel na \'epoca de Cauchy.
Foi somente em 1925 que T. Rad\'o \cite{Rad} provou o teorema de triangula\c c\~ao
para superf\'icies, que era mais ou menos admitido na \'epoca de Cauchy. 
O teorema da classifica\c c\~ao de
superf\'icies compactas e a representa\c c\~ao sob a forma ``normal'' foi provado 
pela primeira vez de maneira rigorosa por H.R. Brahana \cite{Bra} (1921).
\'E claro que, usando nosso teorema \ref {teo1} e a representa\c c\~ao de superf\'icies 
sob a forma normal, obtemos imediatamente o resultado para 
 a caracter\'istica de Euler-Poincar\'e de 
qualquer superf\'icie compacta.
No entanto, \'e como a serpente que morde o seu pr\'oprio rabo, ou seja ``remar contra a mar\'e''. 
Isso \'e a raz\~ao pela qual n\~ao 
apresentamos o resultado para superf\'icies em geral, mas apenas o que \'e poss\'ivel fazer 
com o m\'etodo de Cauchy em algumas superf\'icies elementares, ao fim de respeitar 
a cronologia e a hist\'oria.

Este artigo \'e uma vers\~ao em portugu\^es e ampliada do artigo dos mesmos autores, em ingl\^es: 
``An elementary proof of the Euler's formula using the Cauchy's Method''. 

O artigo \'e tamb\'em uma vers\~ao estendida da primeira parte do minicurso ministrado pelos autores 
durante a  XXVIII SEMAT Semana de Matem\'atica, 17 a 21 de Outubro de 2016 no Instituto de 
Bioci\^encias, Letras e Ci\^encias Exatas (IBILCE) - C\^ampus de S\~ao Jos\'e do Rio Preto da 
Universidade Estadual Paulista ``J\'ulio de Mesquita Filho'' (UNESP). A segunda parte, 
Teorema de Poincar\'e-Hopf, {foi publicada} na Revista Electr\^onica Paulista de Matem\'atica 
C.Q.D \cite{BT}.

O primeiro autor teve aux\'ilio financeiro da FAPESP  (processo UNESP-FAPESP no. 2015/06697-9).
\bigskip

\section{Triangula\c c\~oes - Orienta\c c\~ao} \label{rappels}

Nesta se\c c\~ao introduzimos essencialmente a no\c c\~ao de triangula\c c\~ao. Por isso, 
precisamos as defini\c c\~oes de complexo simplicial, de poliedro assim que de 
representa\c c\~ao planar. 

\subsection{Poliedros} \label{poliedros particulares}

H\'a muitas defini\c c\~oes de poliedros dependente de autores. A primeira 
discuss\~ao diz respeito \`a dimens\~ao do poliedro. Seria que um poliedro \'e um objeto s\'olido de dimens\~ao tr\^es constru\'ido  a partir de faces poligonais ou seria que \'e a parte 
superficial (de dimens\~ao 2) deste s\'olido? Neste artigo, 
chamaremos de poliedro a superf\'\i cie, isto \'e, a uni\~ao das faces poligonais. 
Por\'em, note que Euler considerou poliedros como sendo objetos s\'olidos de dimens\~ao 3
(veja \S \ref{provaEuler}). 

\smallskip

A segunda discuss\~ao diz respeito \`as condi\c c\~oes de liga\c c\~ao  dos elementos  
constituintes da parte superficial do poliedro. Neste artigo, usaremos a seguinte 
defini\c c\~ao (veja \cite{Ri}, Cap\'itulo 2).

\begin{definition}
Um {\it poliedro} $P$ \'e uma figura constru\'ida por faces poligonais tal que cada segmento \'e face comum de exatamente dois pol\'igonos de dimens\~ao 2 e cada v\'ertice \'e face comum de pelo menos tr\^es segmentos.
\end{definition}

\begin{definition} \label{convexo} 
Um poliedro em $\R^3$ \'e {\it convexo} se ele satisfaz uma das condi\c c\~oes equivalentes:
\begin{itemize}
\item Para cada face de dimens\~ao 2, o poliedro est\'a completamente localizado 
 no mesmo semi-espa\c co delimitado pelo plano que cont\'em a face. 
 \item Considerando um poliedro 
como sendo de dimens\~ao tr\^es,  para todos dois pontos dentro no poliedro, 
o segmento (reto) ligando estes dois pontos \'e contido completamente no poliedro. 
\item Considerando um poliedro 
como sendo de dimens\~ao dois, para todos dois pontos do poliedro, o segmento (reto) ligando estes dois pontos n\~ao 
encontra o poliedro em ademais pontos.
\end{itemize}
\end{definition}

Neste artigo, useramos a no\c c\~ao de triangula\c c\~ao, a qual pode ser 
definida a partir da no\c c\~ao de complexo simplicial. 

\begin{definition} \label{compsimp}
Um {\it complexo simplicial} (geom\'etrico) finito $C$ de dimens\~ao $2$ 
 \'e um conjunto finito de ``simplexos'': tri\^angulos (simplexos de dimens\~ao 2), 
arestas (simplexos de dimens\~ao 1) e v\'ertices (simplexos de dimens\~ao 0), tais que 
\begin{enumerate}
\item cada face de um simplexo \'e um elemento do complexo simplicial, 
\item  para todo par $(\sigma_i, \sigma_j)$ de simplexos, 
a interse\c c\~ao $\sigma_i \cap \sigma_j$ \'e 
seja vazia, seja uma face comum de $\sigma_i$ e $ \sigma_j$. 
\end{enumerate}
\end{definition}

Existem v\'arias maneiras para representar um complexo simplicial em um espa\c co 
euclidiano. A mais comum,  que \'e v\'alida para todos complexos simpliciais \'e a no\c c\~ao 
 de  realiza\c c\~ao geom\'etrica sob a forma de poliedro. 

\begin{example} \label{def_poliedro}
Um exemplo de poliedro \'e dado pela realiza\c c\~ao geom\'etrica de um complexo simplicial $C$, 
como sendo o subespa\c co topol\'ogico compacto 
de $\R^n$ formado pela cole\c c\~ao de todas as uni\~oes de simplexos de $C$. 
Este poliedro \'e chamado 
de {\it poliedro triangulado} ou seja de {\it realiza\c c\~ao geom\'etrica} de $C$
 e \'e denotado por $\vert C\vert$.
\end{example}

\begin{remark}
\'E sempre poss\'ivel construir o poliedro $\vert C\vert$ correspondente 
a um  complexo simplicial $C$ no espa\c co euclidiano $\R^{n_0}$ onde $n_0$  \'e o n\'umero 
de v\'ertices de $C$.
\end{remark}

\begin{definition} \label{def_triangulation}
Uma {\it triangula\c c\~ao}  $(C,\varphi)$ de uma superf\'\i cie $\mathcal{S}$ 
\'e o dado de um complexo simplicial $C$ e de um homeomorfismo $\varphi : \vert C \vert \to \mathcal{S}$. 
\end{definition}

Assim, na figura \ref{triangsphere}, temos exemplos
de triangula\c c\~oes da esfera $\Sp^2$ e do toro. 

\begin{figure}[H]
  \begin{tikzpicture}[scale=0.8]

\draw[very thick] (7,5) -- (7,6) -- (8,6) -- (8,5) -- (7,5);
\draw[very thick] (8,5) -- (8.7,5.5) -- (8.7,6.3) -- (8,6);
\draw[very thick] (8.7,6.3) -- (7.7,6.3) -- (7,6);
\draw[dotted, very thick] (7,5) -- (7.7,5.5) -- (8.7,5.5);
\draw[dotted, very thick] (7.7,5.5) -- (7.7,6.3);
\draw[very thick] (7, 5) -- (8,6);
\draw[very thick] (7, 6) -- (8.7, 6.3);
\draw[very thick] (8, 5) -- (8.7, 6.3);

\node at (8, 3) [left] {$\vert C\vert$}; 
\draw [->] (8.7, 3) --(9.7, 3);
\node at (9.5, 3.2) [left] {$\varphi$}; 
\draw[very thick] (6, 0.5) -- (7,0) -- (8.7, 0.5) -- (7.5,2) -- (6, 0.5);
\draw[very thick]  (7,0) --  (7.5,2);
\draw[dotted, very thick] (6, 0.5) -- (8.7, 0.5);

\draw [->] (10, 1) cos (11.5, 2.2);
\draw [->] (10, 5.5) cos (11.5, 4.8);

\draw[very thick] (11.5, 3.5) circle (0.8cm);
\draw[dotted, thick]  (12.3cm, 3.5cm) arc (0:60:0.4) arc (60:90:1.2) arc (90:120:1.2) arc (120:180:0.4);
\draw[thick]  (12.3cm, 3.5cm) arc (0:-50:0.4) arc (-50:-90:1.02) arc (-90:-130:1.02) arc (-130:-180:0.4);
\end{tikzpicture}
\qquad\qquad\qquad
\begin{tikzpicture}[scale=0.4]

\coordinate (A) at (0,0);
\coordinate (B) at (2,0);
\coordinate (C) at (4,0);
\coordinate (D) at (6,0);
\coordinate (E) at (0,2);
\coordinate (F) at (2,2);
\coordinate (G) at (4,2);
\coordinate (H) at (6,2);
\coordinate (I) at (0,4);
\coordinate (J) at (2,4);
\coordinate (K) at (4,4);
\coordinate (L) at (6,4);
\coordinate (M) at (0,6);
\coordinate (N) at (2,6);
\coordinate (O) at (4,6);
\coordinate (P) at (6,6);

\coordinate (Q) at (7,1);
\coordinate (R) at (7,3);
\coordinate (S) at (7,5);
\coordinate (T) at (7,7);
\coordinate (U) at (1,7);
\coordinate (V) at (3,7);
\coordinate (W) at (5,7);

\coordinate (X) at (3,3);
\coordinate (Y) at (3,4);
\coordinate (Z) at (4,3);

\draw [thick](A)--(B)--(C)--(D);
\draw [thick](E)--(F)--(G)--(H);
\draw [thick](I)--(J)--(K)--(L);
\draw [thick](M)--(N)--(O)--(P);

\draw [thick](A)--(E)--(I)--(M);
\draw [thick](B)--(F)--(J)--(N);
\draw [thick](C)--(G)--(K)--(O);
\draw [thick](D)--(H)--(L)--(P);

\draw [thick](M)--(U)--(V)--(W)--(T) -- (S) -- (R)--(Q)--(D);
\draw [thick](N)--(V);
\draw [thick](O)--(W);
\draw [thick](P)--(T);
\draw [thick](L)--(S);
\draw [thick](H)--(R);

\draw [very thick](F)--(X)-- (Y);
\draw [very thick](X)-- (Z);

\draw (E)--(B);
\draw (I)--(C);
\draw (M)--(J);
\draw (N)--(H);
\draw (O)--(L);
\draw (G)--(D);
\draw (M)--(V);
\draw (N)--(W);
\draw (O)--(T);
\draw (P)--(S);
\draw (L)--(R);
\draw (H)--(Q);

\draw[very thick] (12, 3) ellipse (3cm and 1.5 cm);
\draw [very thick] (9.8, 3.1) sin (12, 2.5) cos (14.2, 3.1);
\draw [very thick] (10, 3) sin  (12, 3.5) cos (14, 3); 

\draw[very thick] (12,1.5) arc (90:0:-0.4) arc (0:-90: -0.6cm) ;
\draw[dotted, very thick] (12,1.5) arc (-90:0:+0.6) arc (0:90: 0.4cm) ;

\node at (3, -3) [left] {$\vert C\vert$}; 
\draw [->] (5, -3) --(9, 0);
\node at (8.5, -2) [left] {$\varphi$}; 

\end{tikzpicture}
\caption{Triangula\c c\~oes da esfera e do toro.}\label{triangsphere}
\end{figure}
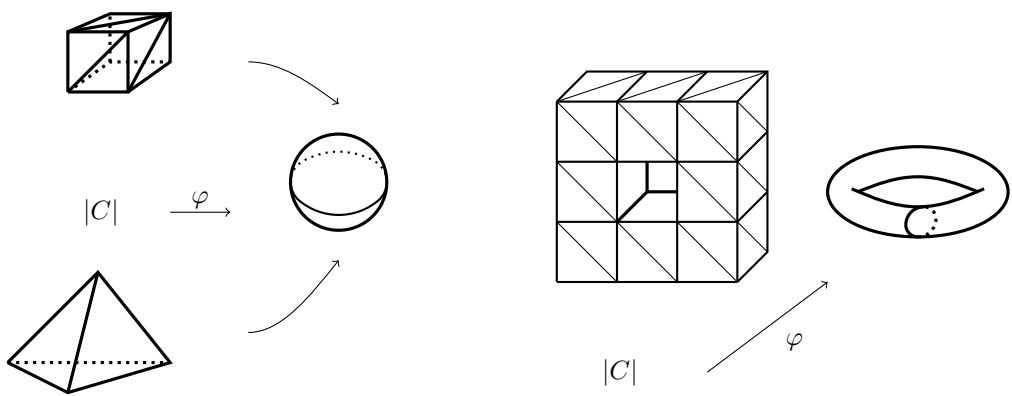
Uma maneira conveniente para representar um complexo simplicial de dimens\~ao 2 
que fornece uma triangula\c c\~ao de uma superf\'\i cie (compacta, sem borda) 
\'e a representa\c c\~ao planar,  na qual  
o complexo simplicial  \'e representado por um pol\'igono.

\begin{definition} \label{repplanar}
Uma {\it representa\c c\~ao planar} de uma superf\'\i cie (compacta, sem borda) 
$\mathcal{S}$ \'e uma tripla $(K, K_0, \varphi)$ na qual
\begin{enumerate}
\item  $K$ \'e um pol\'igono  no espa\c co euclidiano $\R^2$, que \'e triangulado pela 
uni\~ao de simplexos formando um complexo simplicial,
\item os simplexos da borda de $K$ est\~ao nomeados e orientados com identifica\c c\~oes. 
Denotamos por $K_0$ a borda de $K$ com as identifica\c c\~oes,
\item $\varphi: \vert K \vert \to \mathcal{S}$ \'e um homeomorfismo da realiza\c c\~ao geom\'etrica de $K$
(tomando em conta as identifica\c c\~oes dos simplexos na borda $K_0$) em $\mathcal{S}$. 
\end{enumerate}
\end{definition}

\begin{figure}[H]
\begin{tikzpicture}[scale=0.9]

\draw[->, very thick] (1.5,  0) -- (1.25, 0.5); 
\draw [very thick] (1.25, 0.5) -- (0.75, 1.5);
\draw [->, very thick] (0.5, 2) -- (0.75, 1.5);
\draw [->, very thick] (0.5, 2) -- (1, 2);
\draw [very thick] (1, 2) -- (2, 2);
\draw [->, very thick] (2.5, 2) -- (2, 2);
\draw [->, very thick] (2.5, 2) -- (2.25, 1.5);
\draw [very thick] (2.25, 1.5) -- (1.75, 0.5);
\draw [->, very thick] (1.5, 0) -- (1.75, 0.5);
\draw[very thick] (1,1) -- (2,1) -- (1.5, 2) -- (1, 1); 
\node at (1.25, 0.5) [left] {$a$}; 
\node at (1.75, 0.5) [right] {$b$}; 
\node at (0.75, 1.5) [left] {$a$}; 
\node at (2.25, 1.5) [right] {$b$}; 
\node at (2, 2) [above] {$c$};

\node at (1, 2) [above] {$c$};


\draw[->, very thick] (0,  5) -- (0.5, 5); 
\draw[ ->, very thick] (0.5, 5) -- (2.5, 5); 
\draw[->, very thick] (2.5,  5) -- (3.5, 5); 
\draw[very thick] (3.5,  5) -- (4, 5); 
\node at (0.5, 5) [below] {$e$}; 
\node at (2.5, 5) [below] {$g$}; 
\node at (3.5, 5) [below] {$f$}; 

\draw[->, very thick] (0,  6) -- (0.5, 6); 
\draw[ ->, very thick] (0.5, 6) -- (2.5, 6); 
\draw[->, very thick] (2.5,  6) -- (3.5, 6); 
\draw[very thick] (3.5,  6) -- (4, 6); 
\node at (0.5, 6) [above] {$c$}; 
\node at (2.5, 6) [above] {$a$}; 
\node at (3.5, 6) [above] {$b$}; 

\draw[->, very thick] (0, 5) -- (0, 5.5); 
\node at (0, 5.5) [left] {$d$}; 
\draw[ very thick] (0, 5.5) -- (0,6); 

\draw[->, very thick] (4, 5) -- (4, 5.5); 
\node at (4, 5.5) [right] {$d$}; 
\draw[ very thick] (4, 5.5) -- (4,6); 

\draw[very thick] (3, 5) -- (3,6); 

\draw[->, very thick] (1,  4) -- (1, 4.5); 
\draw[very thick] (1, 4.5) -- (1, 6.5); 
\draw[->, very thick] (1, 7) -- (1, 6.5); 
\draw[-<, very thick] (1, 7) -- (1.55, 7); 
\node at (1, 4.5) [left] {$e$}; 
\node at (1, 6.5) [left] {$c$}; 
\node at (1.6, 3.8) [left] {$f$}; 

\draw[very thick] (2,  4) -- (2, 4.5); 
\draw[->, very thick] (2, 6) -- (2, 4.5); 
\draw[->, very thick] (2, 6) -- (2, 6.5); 
\draw[very thick] (2, 7) -- (2, 6.5); 
\draw[-<, very thick] (1, 4) -- (1.55, 4); 
\node at (2, 4.5) [left] {$g$}; 
\node at (2, 6.5) [left] {$a$}; 
\node at (1.8, 7.25) [left] {$b$}; 

\draw[very thick] (1, 4) -- (2,4) ; 
\draw[very thick] (1, 7) -- (2,7) ; 

\draw[very thick] (0, 5) -- (2, 7);
\draw[very thick] (1, 5) -- (2, 6);
\draw[very thick] (1, 4) -- (3, 6);
\draw[very thick] (3, 5) -- (4, 6);

\draw[very thick] (7,5) -- (7,6) -- (8,6) -- (8,5) -- (7,5);
\draw[very thick] (8,5) -- (8.7,5.5) -- (8.7,6.3) -- (8,6);
\draw[very thick] (8.7,6.3) -- (7.7,6.3) -- (7,6);
\draw[dotted, very thick] (7,5) -- (7.7,5.5) -- (8.7,5.5);
\draw[dotted, very thick] (7.7,5.5) -- (7.7,6.3);
\draw[very thick] (7, 5) -- (8,6);
\draw[very thick] (7, 6) -- (8.7, 6.3);
\draw[very thick] (8, 5) -- (8.7, 6.3);

\draw [->] (5, 5.5) --(6, 5.5);
\draw [->] (3.5, 1) --(4.5, 1);

\draw[very thick] (6, 0.5) -- (7,0) -- (8.7, 0.5) -- (7.5,2) -- (6, 0.5);
\draw[very thick]  (7,0) --  (7.5,2);
\draw[dotted, very thick] (6, 0.5) -- (8.7, 0.5);

\draw [->] (10, 1) cos (11.5, 2.2);
\draw [->] (10, 5.5) cos (11.5, 4.8);

\draw[very thick] (11.5, 3.5) circle (0.8cm);
\draw[dotted, thick]  (12.3cm, 3.5cm) arc (0:60:0.4) arc (60:90:1.2) arc (90:120:1.2) arc (120:180:0.4);
\draw[thick]  (12.3cm, 3.5cm) arc (0:-50:0.4) arc (-50:-90:1.02) arc (-90:-130:1.02) arc (-130:-180:0.4);

\node at (2, 3) [left] {$K$}; 
\draw [->] (4.2, 3) --(5.2, 3);
\node at (8, 3) [left] {$\vert K\vert$}; 
\draw [->] (8.7, 3) --(9.7, 3);
\node at (9.5, 3.2) [left] {$\varphi$}; 

\end{tikzpicture}
\bigskip
\caption{Representa\c c\~oes planares da esfera. Os simplexos da borda de mesmo nome 
devem ser identificados, respeitando a orienta\c c\~ao.} \label{triangsphere2}
\end{figure}
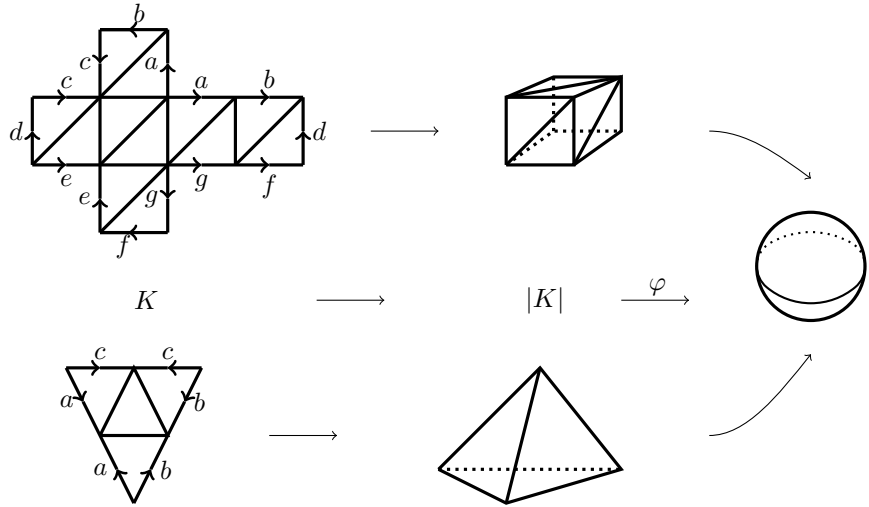

Providenciamos v\'arios exemplos de poliedros 
 e de representa\c c\~oes planares 
nas figuras \ref{triangsphere2}, \ref{o toro}, \ref{Projplanar} e \ref{Klein}.

\medskip 

A partir de agora, poderemos falar de triangula\c c\~ao de uma superf\'\i cie dada seja por 
um complexo simplicial $C$, seja por um poliedro triangulado $P$,  seja por uma representa\c c\~ao planar 
$(K, K_0, \varphi)$ tal que conforme o caso, $\vert C\vert$, $P$ ou $\vert K\vert$ seja homeomorfo 
a $\mathcal{S}$.

\begin{remark} \label{remarquelinear}
Quando realizamos a figura de uma representa\c c\~ao planar de uma superf\'\i cie, \`as vezes 
o desenho pode ter arestas curvilineares. Neste caso, podemos sempre considerar 
uma figura homeomorfa com arestas lineares.  
\end{remark} 

\begin{remark} \label{remarque}
 Em uma representa\c c\~ao planar de uma superf\'\i cie, alguns desenhos est\~ao 
proibidos. Por exemplo, se queremos representar um cone de v\'ertice $B$ (figura \ref{AA}, [1]), 
s\~ao proibidos os desenhos [2] e [3] da figura \ref{AA}: no desenho [2],  a aresta $(A,A)$ 
 representa somente um ponto $A$, ent\~ao obtemos no poliedro 
somente uma 
aresta $(A,B)$. No desenho [3], as arestas $(A,C)$ s\~ao identificados, 
assim que as arestas $(A,B)$, ent\~ao obtemos pelo poliedro 
apenas um tri\^angulo $ABC$, que n\~ao \'e  um cone como esperado. 
A representa\c c\~ao planar  [4]  da figura \ref{AA} \'e uma representa\c c\~ao correta do cone.

\begin{figure}[H] 
\begin{tikzpicture} [scale=0.9]
\draw (-3,2) ellipse (30pt and 15pt);
\draw(-3,0)node[below]{$B$} -- (-3.5,1.55) node[above]{$A$} ;
\draw (-3,0) -- (-2,1.8);
\draw(-3,0) -- (-4,1.8);
\draw (-3,-1)node[below]{$[1]$};

  \draw (1,0) node[below]{$B$}  -- (0,1) node[left]{$A$}  -- (1,2) node[above]{$A$} -- (1,0);  
  \draw (0.5,-1)node[below]{$[2]$};
  
\draw (4,0) node[below]{$B$}  -- (3,1.5) node[left]{$A$}  -- (4,2) node[above]{$C$} -- (4,0); 
\draw (4,0)  -- (4,2)  -- (5.2, 2.5) node[above]{$A$} -- (4,0); 
\draw (4.5,-1)node[below]{$[3]$};

  \draw (7, 1) node[left]{$A$}  -- (7.9,1.5) node[above]{$C$} -- 
  (8.8,2) node[above]{$D$} -- 
(9.7,2.5) node[above]{$A$} -- (9,0.5) node[below]{$B$} -- (8.8,2);  
 \draw (7,1) -- (9,0.5) -- (7.9,1.5);
 \draw (8.5,-1)node[below]{$[4]$};
 \end{tikzpicture}
\caption{Representa\c c\~ao planar de um cone. Os desenhos [2] e [3] n\~ao s\~ao 
admiss\'iveis para representar o cone.  A representa\c c\~ao [4] representa o cone [1].}\label{AA}
\end{figure}
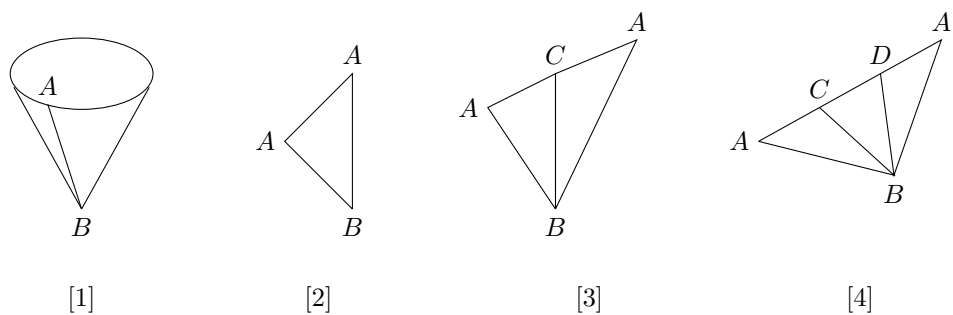
\end{remark}
\goodbreak

\subsection{Superf\'icies orient\'aveis e n\~ao orient\'aveis}

\subsubsection{Orienta{\c c}\~ao}

Consideremos um poliedro que \'e a  realiza\c c\~ao geometrica de um complexo simplicial 
de dimens\~ao dois no sentido do exemplo \ref{def_poliedro}. 
Os simplexos do poliedro considerado s\~ao v\'ertices, arestas e tri\^angulos. 
 Denotamos por $a_i$ os v\'ertices dos simplexos.
Quando definirmos uma ordem pelos v\'ertices de arestas, por exemplo a aresta $(a_0, a_1)$, 
denotaremos os respectivamente por $[a_0, a_1]$ ou $[a_1, a_0]$.  
Analogamente, $[a_0, a_1, a_2]$ ou  $[a_1, a_2, a_0]$, etc. 
 ser\~ao os tri\^angulos $(a_0, a_1, a_2)$ ordenados.

Uma ordem $[a_0, a_1, a_2]$ dos v\'ertices de um tri\^angulo define uma orienta{\c c}\~ao do 
tri\^angulo (veja figura \ref{figura12}). 
Qualquer transposi{\c c}\~ao de dois  v\'ertices define a orienta{\c c}\~ao oposta. Por exemplo, 
a transposi{\c c}\~ao de $a_1$ e $a_2$ d\'a o tri\^angulo  $[a_0, a_2, a_1]$ cuja orienta{\c c}\~ao \'e oposta da orienta{\c c}\~ao do tri\^angulo $[a_0, a_1, a_2]$. 
Denotamos $[a_0, a_2, a_1] = - [a_0, a_1, a_2]$ (veja figura \ref{figura12}). 
Uma segunda transposi{\c c}\~ao (por exemplo $[a_2, a_0, a_1]$) d\'a a volta da orienta{\c c}\~ao original.

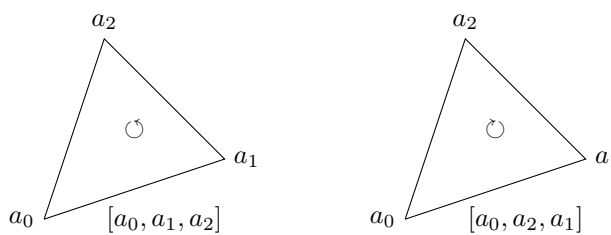
\begin{figure}[h]
\begin{tikzpicture} [scale=0.8]
  \draw (0,0) node[left]{$a_0$}   --  (1,3) node[above]{$a_2$} -- (3, 1) node[right]{$a_1$} -- (0,0);
\node at (1.5, 1.5) {$\circlearrowleft$}; 
\node at (2,0) {$[a_0, a_1, a_2 ]$};

\draw (6,0) node[left]{$a_0$}   --  (7,3) node[above]{$a_2$} -- (9, 1) node[right]{$a_1$} -- (6,0);
\node at (7.5, 1.5) {$\circlearrowright$}; 
\node at (8,0) {$[a_0, a_2, a_1 ]$};

\end{tikzpicture}
\caption{Orienta{\c c}\~oes opostas do tri\^angulo.}\label{figura12}
\end{figure}

Assim, podemos definir uma rela{\c c}\~ao de equival\^encia sobre ordens.  Duas ordens s\~ao 
equivalentes se, e somente se, podemos passar de uma \`a outra por um n\'umero par 
de transposi{\c c}\~oes. Podemos dar a defini{\c c}\~ao da orienta{\c c}\~ao.

\begin{definition}
{\rm Uma {\it orienta{\c c}\~ao} de um simplexo \'e uma classe de equival\^encia de ordens 
pela rela{\c c}\~ao acima.}
\end{definition}
 
\subsubsection{Orienta{\c c}\~ao das faces} 
Seja $\sigma = [a_0, a_1, a_2]$ um tri\^angulo orientado (pela ordem dada). 
Uma face $\tau_i$ de dimens\~ao 1 se escreve  $(a_j,a_k)$ tirando um v\'ertice $a_i$. 
A orienta{\c c}\~ao de $\sigma$ induz uma orienta{\c c}\~ao de cada tal face $(a_j,a_k)$, 
chamada de {\it orienta{\c c}\~ao induzida}, como sendo $(-1)^i [a_j,a_k]$. 
Assim a borda orientada de $\sigma = [a_0, a_1, a_2]$ pode se escrever
$$ [\widehat{a}_0, a_1, a_2] - [ a_0, \widehat{a}_1, a_2] + [a_0, a_1, \widehat{a}_2] =
[a_1, a_2] - [ a_0, a_2] + [a_0, a_1] $$
onde, $\widehat{a}_i$ significa que $a_i$ n\~ao \'e presente em $[a_0, a_1, a_2]$, para $i=0, 1, 2$.
\'E f\'acil de verificar que esta 
no\c c\~ao \'e bem definida (independente de {um n\'umero par de permuta\c c\~oes} 
dos v\'ertices de $\sigma$). 
A no\c c\~ao de orienta{\c c}\~ao induzida coincide com a no\c c\~ao intuitiva de orienta{\c c}\~ao 
da borda (veja figura \ref{figura13}). 

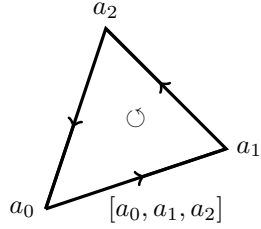
\begin{figure}[h]
\begin{tikzpicture} [scale=0.8]
  \draw [very thick] (0,0) node[left]{$a_0$}    --  (1,3) node[above]{$a_2$} -- (3, 1) node[right]{$a_1$} -- (0,0);
\node at (1.5, 1.5) {$\circlearrowleft$}; 
\node at (2,0) {$[a_0, a_1, a_2 ]$};
\draw [very thick,-<] (0,0) -- (0.5, 1.5);
\draw [very thick, -<] (1,3) -- (2,2);
\draw [very thick, -<] (3,1) -- (1.5, 0.5);
\end{tikzpicture}
\caption{Orienta{\c c}\~oes das faces do tri\^angulo.}\label{figura13}
\end{figure}

\subsubsection{Orienta{\c c}\~ao compat\'ivel}
Sejam $\sigma=(a_0, a_1, a_2)$ e $\sigma' = (b_0, a_1, a_2)$ dois tri\^angulos com 
uma face comum $(a_1, a_2)$ (figura \ref{figura14}).

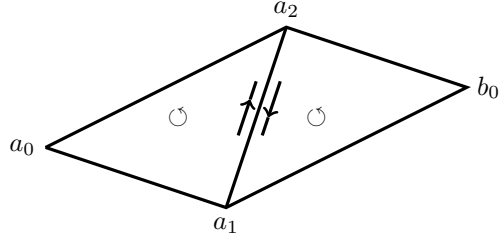
\begin{figure}[h]
\begin{tikzpicture} [scale=0.8]
  \draw[very thick] (1,1) node[left]{$a_0$}    --  (5,3) node[above]{$a_2$} -- (8, 2) node[right]{$b_0$} -- (4,0) node[below]{$a_1$}--(1,1);
\draw [very thick] (4,0) -- (5,3);
 \node at (3.2,1.5) {{\bf $\circlearrowleft$}};
 \node at (5.5,1.5) {{\bf $\circlearrowleft$}};
\draw [very thick] [->] (4.2, 1.2) -- (4.4, 1.8);
\draw  [very thick] (4.4, 1.8) --(4.5, 2.1);
\draw [very thick] [->] (4.9, 2.1) -- (4.7, 1.5);
\draw   [very thick](4.7, 1.5) -- (4.6, 1.2);

\end{tikzpicture}
\caption{Orienta{\c c}\~oes compat\'iveis.}\label{figura14}
\end{figure}

Toda orienta{\c c}\~ao de $\sigma$, por exemplo $[a_0, a_1, a_2]$, define uma orienta{\c c}\~ao 
sobre $\sigma'$, que chamaremos de {\it orienta{\c c}\~ao compat\'ivel}, tal que a face comum  
$(a_1, a_2)$ t\^em orienta{\c c}\~oes opostas como faces de $\sigma$ e $\sigma'$. 

Esta defini{\c c}\~ao corresponde \`a no{\c c}\~ao intuitiva da compatibilidade de orienta{\c c}\~oes. 

\subsubsection{Orienta{\c c}\~ao de poliedros}   
Seja $P$ um poliedro triangulado compacto, conexo de dimens\~ao 2. Sejam $\sigma$ e $\sigma'$ 
dois simplexos  de dimens\~ao 2, podemos escolher cadeias de 2-simplexos 
$$\sigma = \sigma_0, \sigma_1, \ldots, \sigma_i, \sigma_{i+1}, \ldots, \sigma_k = \sigma'$$
 tais que para todo $i = 0, \ldots, k-1$, os dois simplexos $\sigma_i$ e $\sigma_{i+1}$ t\^em uma face 
 comum de dimens\~ao 1 (veja figura \ref{figura15}).

\begin{figure}[h]
\begin{tikzpicture} 
  \draw (4,1) -- (3, 0.5) -- (2,1) -- (1,2) -- (0.5, 3.5) -- (2,4) -- (3.5, 5) -- (4.5, 4) -- (3, 3) -- 
(2, 2.5) -- (3, 1.5) -- (4,1);

\draw (3, 0.5) -- (3, 1.5);
\draw (2,1) --(3, 1.5);
\draw (2,1) --(2, 2.5);
\draw (1,2) --(2, 2.5);
\draw (0.5, 3.5) --(2, 2.5);
\draw  (2,4) --(2, 2.5);
\draw  (2,4) --((3, 3);
\draw   (3.5, 5) --((3, 3);

\node at (6.5, 0.4) {Cadeia 2};
\node at (6.5, 1) {$\cdots /  \cdots$};
\node at (6.5, 4){$\cdots /  \cdots$};
\node at (6.5, 4.6) {Cadeia 1};

 \draw (9.5, 0) -- (8.5, 1) -- (10, 1) -- (10,2) -- (11.5,2) -- (11.5, 3.5) -- (10, 4) -- (9,5) -- (8,4) -- (9,3) -- (10, 2) -- (8.5,1);

\draw (10, 2) -- (11.5, 3.5);
\draw (10,2) -- (10,4);
\draw (9,3) -- (10,4);
\draw (9,3) -- (9,5);
\draw (9.5,0) -- (10,1);
\draw (10,1) -- (11.5, 2);

\filldraw[fill=blue!20] (1,2) -- (0.5,3.5) -- (2, 2.5);

\node at (3.4,1) {$\circlearrowleft$};
\node at (2.6,1) {$\circlearrowleft$};
\node at (2.4,1.8) {$\circlearrowleft$};
\node at (1.8,1.7) {$\circlearrowleft$};
\node at (1.3,2.8) {$\circlearrowleft$};
\node at (1.35,2.5) {$\sigma = \sigma_0$};
\node at (2.3,3.5) {$\circlearrowleft$};
\node at (1.5,3.3) {$\circlearrowleft$};

\node at (2.7,3.7) {$\circlearrowleft$};
\node at (2.7,4.1) {$\sigma_i$};
\node at (3.5,3.8) {$\circlearrowleft$};
\node at (3.7,4.2) {$\sigma_{i+1}$};

\filldraw[fill=blue!20] (10,2) -- (11.5, 3.5) -- (10,4);

\node at (9.5,0.5) {$\circlearrowleft$};
\node at (9.5,1.2) {$\circlearrowleft$};
\node at (10.5,1.5) {$\circlearrowleft$};
\node at (11,2.5) {$\circlearrowleft$};
\node at (10.5,3) {$\circlearrowleft$};
\node at (10.5,3.5) {$\sigma'$};
\node at (9.5,3) {$\circlearrowleft$};
\node at (9.5,4) {$\circlearrowleft$};
\node at (8.5,4) {$\circlearrowleft$};
\end{tikzpicture}
\caption{Cadeias de simplexos.} \label{figura15}
\end{figure}
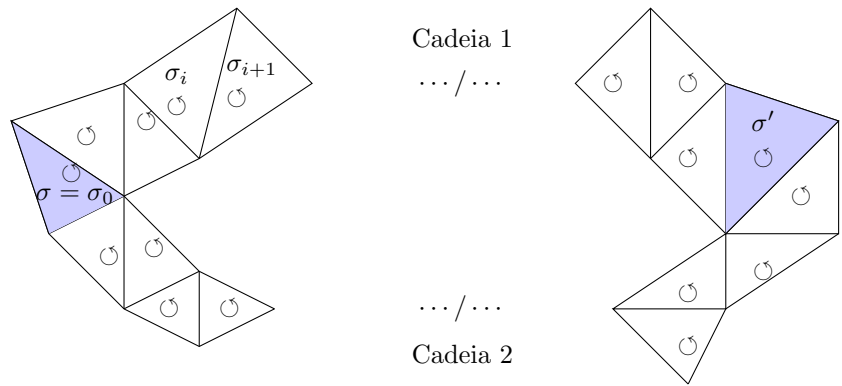

Escolhendo uma orienta{\c c}\~ao de $\sigma$ e uma cadeia de $\sigma$ a $\sigma'$, 
obtemos uma orienta{\c c}\~ao de $\sigma'$, sendo que os dois simplexos $\sigma_i$ e 
$\sigma_{i+1}$ t\^em orienta{\c c}\~oes compat\'iveis para todo $i = 0, \ldots, k-1$.

A orienta{\c c}\~ao obtida pelo tri\^angulo $\sigma'$ \'e chamada de {\it orienta{\c c}\~ao induzida 
sobre $\sigma'$} a partir da orienta{\c c}\~ao de $\sigma$ ao longo da cadeia de 2-simplexos. 

\begin{definition}
Um poliedro  compacto, conexo $K$ de dimens\~ao 2 \'e dito
{\it orient\'avel} se para todo par de tri\^angulos $(\sigma, \sigma')$, a orienta{\c c}\~ao induzida 
sobre $\sigma'$ a partir da orienta{\c c}\~ao de $\sigma$ n\~ao depende da cadeia de simplexos 
escolhida ligando $\sigma$ a $\sigma'$. 
\end{definition}

Na figura \ref{figura15}, obtemos a mesma orienta\c c\~ao induzida 
sobre $\sigma'$, seja pela cadeia 1, seja na cadeia 2. 

\subsubsection{Superf\'icies orient\'aveis (G\^enero e representa\c c\~ao planar)}

Uma superf\'icie triangulada por um poliedro $P$ \'e orient\'avel se, e somente se, o poliedro 
$P$ \'e orient\'avel.

Toda superf\'icie orient\'avel \'e homeomorfa seja \`a esfera $\Sp^2$, seja ao toro $\T$
(figura \ref{o toro}),  seja ao ``toro de g\^enero $g$'' (figura \ref{figura16}) \cite{ST}. 

O {\it g\^enero} $g$ de uma superf\'icie (orient\'avel ou n\~ao) 
\'e o n\'umero m\'aximo de curvas fechadas (homeomorfas a circunfer\^encias) disjuntas que se 
pode remover da superf\'icie sem a desconectar.  Por exemplo, o g\^enero da esfera $\Sp$ \'e 0,  
j\'a que  qualquer circunfer\^encia removida da esfera a desconecta. 
O g\^enero do toro $\T$ \'e 1, uma vez que \'e poss\'ivel remover uma circunfer\^encia 
no toro sem o desconectar. 
No entanto,  se removermos uma segunda circunfer\^encia 
qualquer, disjunta da primeira, o toro ser\'a desconectado  (veja figura \ref{figura15toro}). 

\begin{figure}[H]
\begin{tikzpicture}[scale=0.45]

 \draw [very thick] (0, 0) -- (6,0) -- (6,6) -- (0,6) -- (0,0); 

\draw [very thick] (2, 0) -- (2, 6); 
\draw [very thick] (4, 0) -- (4, 6); 
\draw [very thick](0, 2) -- (6, 2);
\draw [very thick] (0, 4) -- (6, 4); 
\draw [very thick] (0, 6) -- (6, 6); 

\draw [very thick] (2,6) -- (0,4); 
\draw [very thick] (4,6) -- (0,2); 
\draw [very thick] (6,6) -- (0,0); 
\draw [very thick] (6,4) -- (2,0); 
\draw [very thick] (6,2) -- (4,0); 

\node at (1,0){>};
\node at (1,0)[below]{$a$};
\node at (3,0){>};
\node at (3,0)[below]{$b$};
\node at (5,0){>};
\node at (5,0)[below]{$c$};

\node at (1,6){>};
\node at (1,6)[above]{$a$};
\node at (3,6){>};
\node at (3,6)[above]{$b$};
\node at (5,6){>};
\node at (5,6)[above]{$c$};

\node at (0, 1){$\wedge$};
\node at (0,1)[left]{$f$};
\node at (0, 3){$\wedge$};
\node at (0,3)[left]{$e$};
\node at (0, 5){$\wedge$};
\node at (0,5)[left]{$d$};

\node at (6, 1){$\wedge$};
\node at (6,1)[right]{$f$};
\node at (6, 3){$\wedge$};
\node at (6,3)[right]{$e$};
\node at (6,5){$\wedge$};
\node at (6,5)[right]{$d$};


\draw [->] (7.5,3) -- (8.5,3); 

\draw[very thick] (10,2) -- (19, 2);
\draw [very thick] (10.5,4) -- (19.5, 4);
\draw [very thick] (11.1,3) -- (20.2, 3);


\draw [very thick]  (11.1,3) cos (12, 4);
\draw [very thick]  (12,2) cos (14.2, 3) arc (-60:0:1.2);
\draw [very thick]  (15,2) cos (17.2, 3) arc (-60:0:1.2);
\draw  [very thick] (18,2) cos (20.1, 3);

\draw[very thick] (10,2) arc (-90:0:1.2) arc (0:90: 0.8cm) ;
\draw[very thick] (10,2) arc (90:0:-0.8) arc (0:-90: -1.2cm) ;

\draw[very thick] (13,2) arc (-90:0:1.2) arc (0:90: 0.8cm) ;
\draw[dotted, very thick] (13,2) arc (90:0:-0.8) arc (0:-90: -1.2cm) ;

\draw[very thick] (16,2) arc (-90:0:1.2) arc (0:90: 0.8cm) ;
\draw[dotted, very thick] (16,2) arc (90:0:-0.8) arc (0:-90: -1.2cm) ;

\draw[very thick] (19,2) arc (-90:0:1.2) arc (0:90: 0.8cm) ;
\draw[dotted, very thick] (19,2) arc (90:0:-0.8) arc (0:-90: -1.2cm) ;


\draw[very thick] (27, 3) ellipse (3cm and 1.5 cm);
\draw [very thick] (24.8, 3.1) sin (27, 2.5) cos (29.2, 3.1);
\draw [very thick] (25, 3) sin  (27, 3.5) cos (29, 3); 

\draw[very thick] (27,1.5) arc (90:0:-0.4) arc (0:-90: -0.6cm) ;
\draw[dotted, very thick] (27,1.5) arc (-90:0:+0.6) arc (0:90: 0.4cm) ;

\draw[->] (22, 3) -- (23,3);

\end{tikzpicture}
\caption{Uma representa\c c\~ao planar do toro }\label{o toro}
\end{figure}
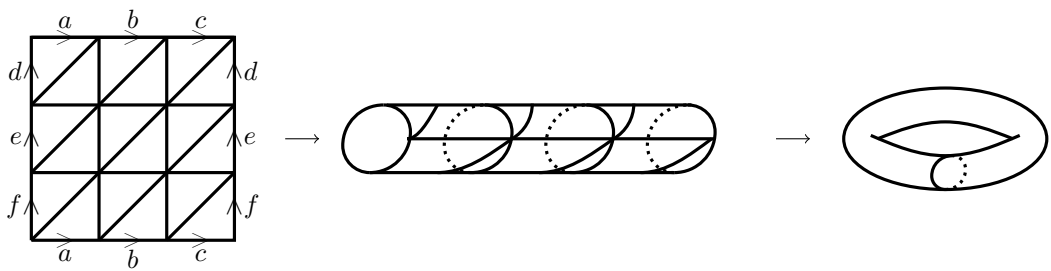

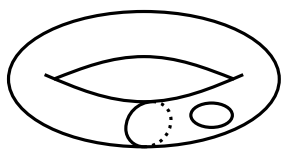
\begin{figure}[H]
\begin{tikzpicture}[scale=0.6]

\draw[very thick] (3, 1.5) ellipse (3cm and 1.5 cm);
\draw [very thick] (0.8, 1.6) sin (3, 1) cos (5.2, 1.6);
\draw [very thick] (1, 1.5) sin  (3, 2) cos (5, 1.5);); 

\draw[very thick] (4.5, 0.7) ellipse (0.46cm and 0.27 cm);

\draw[dotted, very thick] (3,0) arc (-90:0:0.6) arc (0:90: 0.4cm) ;
\draw[very thick] (3,0) arc (90:0:-0.4) arc (0:-90: -0.6cm) ;

\end{tikzpicture}
\caption{O toro \'e  de g\^enero 1 }\label{figura15toro}
\end{figure} 

\begin{figure}[H]
\begin{tikzpicture}[scale=0.9]

\coordinate (a) at (-2.5,2);
\coordinate (a_g) at (-2.5,2.5);
\coordinate (b) at (-2.5,3);
\coordinate (b_g) at (-2.25,3.5);
\coordinate (c) at (-2,4);
\coordinate (c_g) at (-1.5,4.35);
\coordinate (d) at (-1,4.7);
\coordinate (d_g) at (-0.5,4.85);
\coordinate (e) at (0,5);
\coordinate (e_1) at (0.5,5);
\coordinate (f) at (1,5);
\coordinate (f_1) at (1.5,4.85);
\coordinate (g) at (2,4.7);
\coordinate (g_1) at (2.5,4.35);
\coordinate (h) at (3,4);
\coordinate (h_1) at (3.25,3.5);
\coordinate (i) at (3.5,3);
\coordinate (i_2) at (3.5,2.5);
\coordinate (j) at (3.5,2);
\coordinate (j_2) at (3.25,1.5);
\coordinate (k) at (3,1);
\coordinate (k_2) at (2.5,0.5);
\coordinate (l) at (2,0);
\coordinate (l_2) at (1.5,-0.15);
\coordinate (p) at (1,-0.3);

\coordinate (m) at (-2,1);
\coordinate (n) at (-1,0.2);
\coordinate (o) at (0,-0.2);

\draw (a)--(b)--(c)--(d)--(e)--(f)--(g)--(h)--(i)--(j)--(k)--(l)--(p);
\draw[dashed] (m)--(a);
\draw[dashed] (p)--(o);
\draw[thick,dotted] (m)--(n)--(o);


\draw[->>] (a)--(a_g);
\node at (a_g)[left]{$a_{g}$};
\draw[->>] (b)--(b_g);
\node at (b_g)[left]{$b_{g}$};
\draw[-<<] (c)--(c_g);
\node at (c_g)[left]{$a_{g}$};
\draw[-<<] (d)--(d_g);
\node at (d_g)[above]{$b_{g}$};
\draw[->] (e)--(e_1);
\node at (e_1)[above]{$a_1$};
\draw[->] (f)--(f_1);
\node at (f_1)[above]{$b_1$};
\draw[-<] (g)--(g_1);
\node at (g_1)[above]{$a_1$};
\draw[-<] (h)--(h_1);
\node at (h_1)[right]{$b_1$};
\draw[->>] (i)--(i_2);
\node at (i_2)[right]{$a_2$};
\draw[->>] (j)--(j_2);
\node at (j_2)[right]{$b_2$};
\draw[-<<] (k)--(k_2);
\node at (k_2)[right]{$a_2$};
\draw[-<<] (l)--(l_2);
\node at (l_2)[below]{$b_2$};

\coordinate (A) at (-1.5,3.5);
\coordinate (B) at (-0.5,3.5);
\coordinate (C) at (0.5,4);
\coordinate (D) at (0.5,3);
\coordinate (E) at (1.5,4);
\coordinate (F) at (1.5,2.5);
\coordinate (G) at (2,3.5);
\coordinate (H) at (2.5,2.5);

\draw (c)--(A)--(d)--(B)--(e)--(C)--(f)--(E)--(g)--(G)--(h)--(H)--(i);
\draw (j)--(H)--(G)--(F)--(H);
\draw (G)--(E)--(F)--(D)--(E);
\draw (C)--(D)--(B)--(A);
\draw (B)--(C) -- (E);

\end{tikzpicture}
\caption{Uma representa{\c c}\~ao planar do toro de g\^enero $g$. De fato, a figura deve
ser subdivida {como na} observa\c c\~ao  \ref{remarque}.  Cada aresta $a_i$ e $b_i$ sendo dividida em 
{pelo} menos tr\^es arestas,
o que n\~ao fizemos  aqui para ter uma figura mais leve.}\label{figura16}
\end{figure}
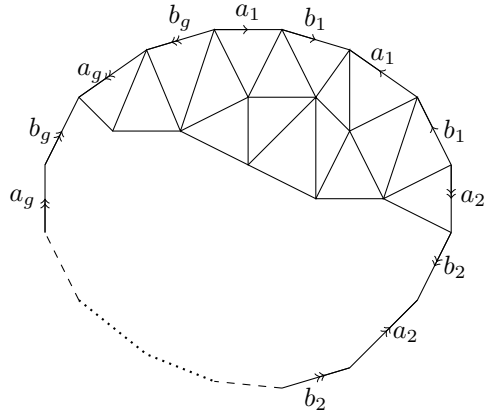

O toro de g\^enero $g$ \'e a soma conexa de um toro de g\^enero $g-1$ e do toro. 

Uma superf\'icie compacta sem borda \'e orient\'avel se, e somente 
se, ela pode ser mergulhada em $\R^3$ (veja por exemplo \cite{BW}). 

\subsubsection{Superf\'icies n\~ao orient\'aveis (G\^enero e representa\c c\~ao planar)}
Duas superf\'icies n\~ao  orient\'aveis (sem borda) 
bem conhecidas s\~ao o plano 
projetivo ${\mathbb{P}}^2$ e a garrafa de Klein. 

O plano projetivo ${\mathbb{P}}^2$ \'e o conjunto de todas as retas do espa{\c c}o euclidiano $\R^3$ 
passando pela origem. 
Uma maneira f\'acil para representar o espa{\c c}o projetivo \'e  considerar em $\R^3$ a esfera $\Sp^2$ 
de raio 1 centrada na origem. Consideramos tr\^es partes da esfera $\Sp^2$: as 
semiesferas (abertas) norte e sul e o equador (veja figura \ref{figura17}). 

\begin{figure}[H]\centering
\begin{tikzpicture} [scale=0.8]

\draw[thick] (2,2) circle (2cm);
\draw[dotted, thick]  (4cm, 2cm) arc (0:60:1) arc (60:90:3) arc (90:120:3) arc (120:180:1);
\draw  (4cm,2cm) arc (0:-50:1) arc (-50:-90:2.55) arc (-90:-130:2.55) arc (-130:-180:1);

\draw[thick, dashed] (2,2) --(2,4);
\draw[thick, ->] (2,4) -- (2,5);
\node at (2, 5)[above] {$z$};

\draw[thick, dashed] (2,2) -- (4,2);
\draw[thick, ->] (4,2) -- (5,2);
\node at (5,2)[right] {$y$};

\draw[very thick,red] (2,2) -- (4,4);
\node at (3.2,3.2) {$\bullet$};
\node at (3.22,3.15)[below] {$x_0$};
\node at (0.8,0.8) {$\bullet$};
\draw[very thick, red,dashed] (2,2) -- (0.6,0.6);
\draw[very thick,red] (0.6,0.6) -- (0,0);

\draw[thick, dashed] (2,2) -- (1.4, 0.8);
\draw[->, thick] (1.4, 0.8) -- (0.5, -1);

\node at  (0.5, -1)[below] {$x$};

\node at (2,2) {$\bullet$};
\node at (2,2)[below] {0};

\end{tikzpicture} 
\bigskip
\caption{Representa{\c c}\~ao do plano projetivo ${\mathbb{P}}^2$.}\label{figura17}
\end{figure}
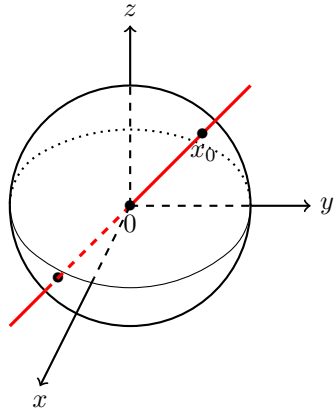

Cada reta de $\R^3$ passando pela origem que n\~ao est\'a no plano $0xy$ encontra a semiesfera norte 
 em um ponto $x_0$. Este ponto \'e um representante da reta. 

As retas contidas no plano $0xy$ encontram  o equador em dois pontos diametralmente opostos, que temos que identificar para ter somente um representante. 

Assim obtemos uma representa{\c c}\~ao do espa{\c c}o projetivo ${\mathbb{P}}^2$, 
como sendo a semiesfera norte com identifica\c c\~ao de pontos diametralmente opostos em sua borda.
Outra maneira \'e considerar a representa{\c c}\~ao planar da figura \ref{Projplanar} 
que \'e, obviamente uma representa{\c c}\~ao equivalente de ${\mathbb{P}}^2$.

\begin{figure}[H]
\begin{tikzpicture}[scale=0.7]

 \draw (0, 0) -- (6,0) -- (6,6) -- (0,6) -- (0,0); 

\draw (2, 0) -- (2, 6); 
\draw (4, 0) -- (4, 6); 
\draw (0, 2) -- (6, 2);
\draw (0, 4) -- (6, 4); 
\draw (0, 6) -- (6, 6); 

\draw (2,6) -- (0,4); 
\draw (4,6) -- (0,2); 
\draw (6,6) -- (0,0); 
\draw (6,4) -- (2,0); 
\draw (6,2) -- (4,0); 

\node at (1,0){<};
\node at (1,0)[below]{$c$};
\node at (3,0){<};
\node at (3,0)[below]{$b$};
\node at (5,0){<};
\node at (5,0)[below]{$a$};

\node at (1,6){>};
\node at (1,6)[above]{$a$};
\node at (3,6){>};
\node at (3,6)[above]{$b$};
\node at (5,6){>};
\node at (5,6)[above]{$c$};

\node at (0, 1){$\wedge$};
\node at (0,1)[left]{$d$};
\node at (0, 3){$\wedge$};
\node at (0,3)[left]{$e$};
\node at (0, 5){$\wedge$};
\node at (0,5)[left]{$f$};

\node at (6, 1){$\vee$};
\node at (6,1)[right]{$f$};
\node at (6, 3){$\vee$};
\node at (6,3)[right]{$e$};
\node at (6,5){$\vee$};
\node at (6,5)[right]{$d$};

\node at (0, 0)[left]{$B$};
\node at (6, 0)[right]{$A$};
\node at (0, 6)[left]{$A$};
\node at (6, 6)[right]{$B$};

\end{tikzpicture}
\bigskip
\caption{Uma representa{\c c}\~ao planar do plano projetivo ${\mathbb{P}}^2$.}  \label{Projplanar} 
\end{figure}
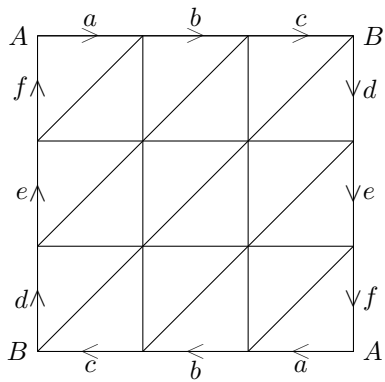

O g\^enero de ${\mathbb{P}}^2$ vale 1. 

A figura \ref{figura18} ilustra o fato de ${\mathbb{P}}^2$ n\~ao ser orient\'avel  e a figura \ref{genero_plano_projetivo} 
ilustra o g\^enero 1 do plano projetivo.
 
\vglue 1truecm 
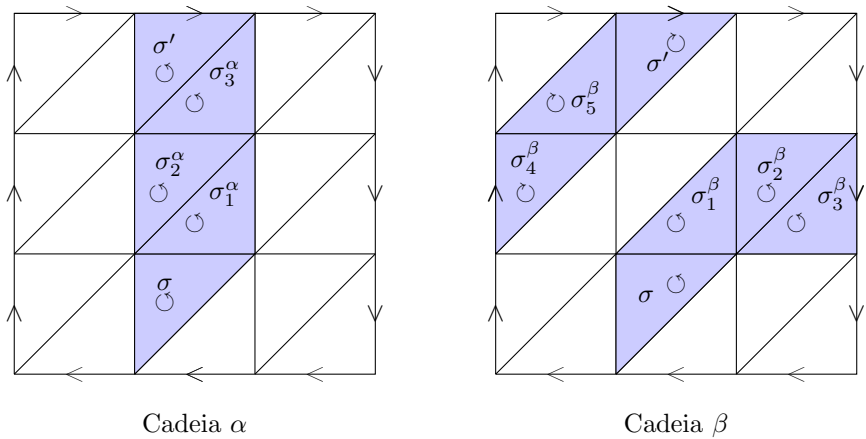
\begin{figure}[H]
\begin{tikzpicture}[scale=0.8]


 \draw (0, 0) -- (6,0) -- (6,6) -- (0,6) -- (0,0); 

\draw (2, 0) -- (2, 6); 
\draw (4, 0) -- (4, 6); 
\draw (0, 2) -- (6, 2);
\draw (0, 4) -- (6, 4); 
\draw (0, 6) -- (6, 6); 

\draw (2,6) -- (0,4); 
\draw (4,6) -- (0,2); 
\draw (6,6) -- (0,0); 
\draw (6,4) -- (2,0); 
\draw (6,2) -- (4,0); 

\node at (1,0){<};
\node at (3,0){<};
\node at (5,0){<};

\node at (1,6){>};
\node at (3,6){>};
\node at (5,6){>};

\node at (0, 1){$\wedge$};
\node at (0, 3){$\wedge$};
\node at (0, 5){$\wedge$};

\node at (6,1){$\vee$};
\node at (6,3){$\vee$};
\node at (6,5){$\vee$};


\filldraw[fill=blue!20] (2,0) -- (2,2) -- (4,2) -- (2,0);
\node at (2.5,1.2) {$\circlearrowleft$};
\node at (2.5,1.5) {$\sigma$};

\filldraw[fill=blue!20] (2,2) -- (4,2) -- (4,4) -- (2,2);
\node at (3,2.5) {$\circlearrowleft$};
\node at (3.5,3) {$\sigma_1^\alpha$};

\filldraw[fill=blue!20] (2,2) -- (2,4) -- (4,4) -- (2,2);
\node at (2.4,3) {$\circlearrowleft$};
\node at (2.6,3.5) {$\sigma_2^\alpha$};

\filldraw[fill=blue!20] (2,4) -- (4,4) -- (4,6) -- (2,4);
\node at (3,4.5) {$\circlearrowleft$};
\node at (3.5,5) {$\sigma_3^\alpha$};

\filldraw[fill=blue!20] (2,4) -- (2,6) -- (4,6) -- (2,4);
\node at (2.5,5) {$\circlearrowleft$};
\node at (2.5,5.5) {$\sigma'$};

\node at (3,-0.5)[below] {Cadeia $\alpha$}; 
\node at (3,0){<};
\node at (3,6){>};


 \draw (8, 0) -- (14,0) -- (14,6) -- (8,6) -- (8,0); 

\draw (10, 0) -- (10, 6); 
\draw (12, 0) -- (12, 6); 
\draw (8, 2) -- (14, 2);
\draw (8, 4) -- (14, 4); 
\draw (8, 6) -- (14, 6); 

\draw (10,6) -- (8,4); 
\draw (12,6) -- (8,2); 
\draw (14,6) -- (8,0); 
\draw (14,4) -- (10,0); 
\draw (14,2) -- (12,0); 

\node at (9,0){<};
\node at (11,0){<};
\node at (13,0){<};

\node at (9,6){>};
\node at (11,6){>};
\node at (13,6){>};

\node at (8, 1){$\wedge$};
\node at (8, 3){$\wedge$};
\node at (8, 5){$\wedge$};

\node at (14, 1){$\vee$};
\node at (14, 3){$\vee$};
\node at (14,5){$\vee$};

\filldraw[fill=blue!20] (10,0) -- (12,2) -- (10,2) -- (10,0);
\node at (11,1.5) {$\circlearrowleft$};
\node at (10.5,1.3) {$\sigma$};

\filldraw[fill=blue!20] (10,2) -- (12,2) -- (12,4) -- (10,2);
\node at (11,2.5) {$\circlearrowleft$};

\filldraw[fill=blue!20] (12,2) -- (14,2) -- (14,4) -- (12,2);
\node at (13,2.5) {$\circlearrowleft$};

\filldraw[fill=blue!20] (12,2) -- (14,4) -- (12,4) -- (12,2);
\node at (12.5,3) {$\circlearrowleft$};

\filldraw[fill=blue!20] (8,2) -- (8,4) -- (10,4) -- (8,2);
\node at (8.5,3) {$\circlearrowright$};

\filldraw[fill=blue!20] (8,4) -- (10,4) -- (10,6) -- (8,4);
\node at (9,4.5) {$\circlearrowright$};

\filldraw[fill=blue!20] (10,4) -- (10,6) -- (12,6) -- (10,4);
\node at (11,5.5) {$\circlearrowright$};
\node at (10.7,5.2) {$\sigma'$};
\node at (11,-0.5)[below] {Cadeia $\beta$}; 


\node at (8, 3){$\wedge$};
\node at (14, 3){$\vee$};
\node at (11, 6){$>$};

\node at (11.5, 3){$\sigma_1^\beta$};
\node at (12.6, 3.5){$\sigma_2^\beta$};
\node at (13.6, 3){$\sigma_3^\beta$};
\node at (8.5, 3.6){$\sigma_4^\beta$};
\node at (9.5, 4.6){$\sigma_5^\beta$};
\end{tikzpicture}
\bigskip
\caption{O plano projetivo ${\mathbb{P}}^2$ n\~ao \'e orient\'avel.  
Indo de $\sigma$ at\'e $\sigma'$ 
por duas cadeias diferentes: cadeia  de 2-simplexos 
$\alpha$ : ($\sigma = \sigma^\alpha_0,  \;  \; \sigma^\alpha_1, \; \sigma^\alpha_2, \; \sigma^\alpha_3, \; \sigma^\alpha_4 = \sigma'$),  e a cadeia de 2-simplexos
$\beta$ : ($\sigma = \sigma^\beta_0,  \; \sigma^\beta_1,  \; \ldots,  \; \sigma^\beta_5,  \; \sigma^\beta_6 = \sigma'$) 
fornece  orienta{\c c}\~oes diferentes de $\sigma'$.}
\label{figura18}
\end{figure}

\vglue 1truecm

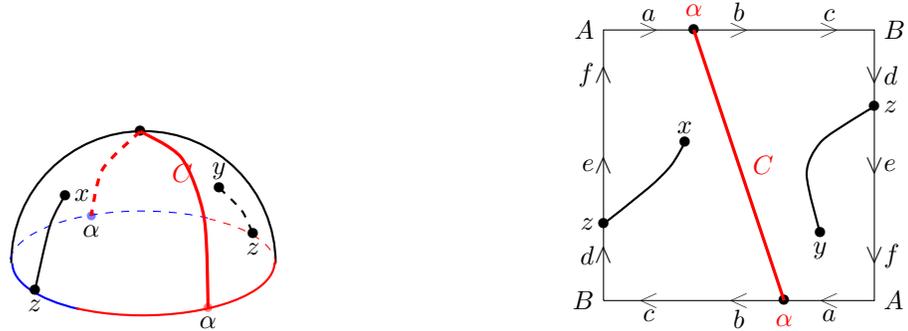
\begin{figure}[H] 
\begin{tikzpicture}  [scale=1]    
\draw [thick](0,0) arc (0:180:50pt);        
\draw[thick,blue] (-3.5,0) arc (180:240:50pt and 20pt);
\draw[thick,red] (-2.65,-0.6) arc (240:360:50pt and 20pt);

\draw [dashed,red] (0,-0.02) arc (0:60:50pt and 20pt);
\draw [dashed,blue] (-0.9,0.6) arc (60:180:50pt and 20pt);

\coordinate [label=below:$\alpha$] (A) at (-0.9,-0.59);
\coordinate [label=below:$\alpha$] (B) at (-2.45,0.63);
 \foreach \point in {A}
    \fill [red,opacity=.5] (\point) circle (1.7pt);
     \foreach \point in {B}
    \fill [blue,opacity=.5] (\point) circle (1.7pt);

 \node at (-1.8,1.75) {$\bullet$};   
\draw[very thick,red] plot[smooth] coordinates {(-1.8,1.75)(-1.25,1.4) (-0.95,0.6)(-0.9,-0.59)};    
\draw[very thick,dashed,red] plot[smooth] coordinates {(-1.8,1.75)(-2.3,1.25)(-2.45,0.63)};    
\node at (-1.5, 1.2)[right,red]{$C$};

\node at (-2.8,0.9) {$\bullet$};
\node at (-2.8,0.9) [right]{$x$};
\node at (-3.2,-0.365) {$\bullet$};
\node at (-3.2,-0.365) [below]{$z$};
\draw[thick] plot[smooth] coordinates {(-2.8,0.9)(-3,0.5)(-3.2,-0.365)};     

\node at (-0.75,1) {$\bullet$};
\node at (-0.75,1) [above]{$y$};
\node at (-0.3,0.385) {$\bullet$};
\node at (-0.3,0.385)  [below]{$z$};
\draw[thick,dashed] plot[smooth] coordinates {(-0.75,1)  (-0.45,0.7)(-0.3,0.385)};

\end{tikzpicture}
\qquad \hskip 3truecm 
\begin{tikzpicture}[scale=0.6]

 \draw (0, 0) -- (6,0) -- (6,6) -- (0,6) -- (0,0); 

\node at (1,0){<};
\node at (1,0)[below]{$c$};
\node at (3,0){<};
\node at (3,0)[below]{$b$};
\node at (5,0){<};
\node at (5,0)[below]{$a$};

\node at (1,6){>};
\node at (1,6)[above]{$a$};
\node at (3,6){>};
\node at (3,6)[above]{$b$};
\node at (5,6){>};
\node at (5,6)[above]{$c$};

\node at (0, 1){$\wedge$};
\node at (0,1)[left]{$d$};
\node at (0, 3){$\wedge$};
\node at (0,3)[left]{$e$};
\node at (0, 5){$\wedge$};
\node at (0,5)[left]{$f$};

\node at (6, 1){$\vee$};
\node at (6,1)[right]{$f$};
\node at (6, 3){$\vee$};
\node at (6,3)[right]{$e$};
\node at (6,5){$\vee$};
\node at (6,5)[right]{$d$};

\node at (0, 0)[left]{$B$};
\node at (6, 0)[right]{$A$};
\node at (0, 6)[left]{$A$};
\node at (6, 6)[right]{$B$};

\coordinate (K) at (2,6);
\coordinate (L) at (4,0);
\node at (2,6) {$\bullet$};
\node at (4,0) {$\bullet$};
\node at (2,6.1) [above]{\textcolor{red}{$\alpha$}};
\node at (4,-0.1) [below]{\textcolor{red}{$\alpha$}};

\draw [very thick, red] (K) -- (L) ;
\node at (3.1, 3)[right,red]{$C$};

\node at (0,1.7) {$\bullet$};
\node at (0,1.7) [left]{$z$};
\node at (1.8,3.5) {$\bullet$};
\node at (1.8,3.5) [above]{$x$};
\draw[thick] plot[smooth] coordinates {(0,1.7)(1,2.5) (1.5,3) (1.8,3.5)};     

\node at (6,4.3) {$\bullet$};
\node at (6,4.3) [right]{$z$};
\node at (4.8,1.5) {$\bullet$};
\node at (4.8,1.5) [below]{$y$};
\draw[thick] plot[smooth] coordinates {(4.8,1.5)(4.5,2.8) (4.8,3.5) (5.5,4)(6,4.3)};    

\end{tikzpicture}
\bigskip
\caption{0 plano projetivo ${\mathbb{P}}^2$ \'e de g\^enero $1$. A curva $C$ n\~ao desconecta o plano 
projetivo. Os pontos $x$ e $y$ est\~ao na mesma componente conexa. Qualquer 
curva suplementar fechada (homeomorfa a uma circunfer\^encia), disjunta de $C$ desconecta 
o plano projetivo.}  \label{genero_plano_projetivo}
\end{figure}

A garrafa de Klein $G$ \'e a uni\~ao de duas fitas de M\"obius ao longo de sua borda comum. 
Observamos que 
a garrafa de Klein \'e a soma conexa de dois planos projetivos, uma vez que o plano 
projetivo tirado um c\'irculo aberto \'e homeomorfo a uma fita de M\"obius.
A garrafa de Klein n\~ao \'e orient\'avel. 

No espa\c co euclidiano $\R^3$, 
n\~ao \'e poss\'ivel representar a garrafa de Klein sem ``autointerse\c c\~ao''
(figura \ref{Klein}). 

A garrafa de Klein tem g\^enero 2 e a figura \ref{genero_garrafa_Klein} ilustra este fato.

Uma superf\'icie n\~ao orient\'avel de g\^enero $g$ \'e homeomorfa \`a soma conexa de $g$ planos  
projetivos ${\mathbb{P}}^2$. 

\begin{figure}[H]
\hskip 20 pt
\scalebox{0.40}{\includegraphics{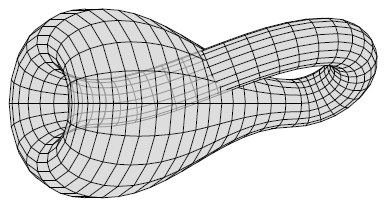}}
\qquad\qquad 
\begin{tikzpicture}[scale=0.6]
 \draw (0, 0) -- (6,0) -- (6,6) -- (0,6) -- (0,0); 

\draw (2, 0) -- (2, 6); 
\draw (4, 0) -- (4, 6); 
\draw (0, 2) -- (6, 2);
\draw (0, 4) -- (6, 4); 
\draw (0, 6) -- (6, 6); 

\draw (2,6) -- (0,4); 
\draw (4,6) -- (0,2); 
\draw (6,6) -- (0,0); 
\draw (6,4) -- (2,0); 
\draw (6,2) -- (4,0); 

\node at (1,0){>};
\node at (1,0)[below]{$a$};
\node at (3,0){>};
\node at (3,0)[below]{$b$};
\node at (5,0){>};
\node at (5,0)[below]{$c$};

\node at (1,6){>};
\node at (1,6)[above]{$a$};
\node at (3,6){>};
\node at (3,6)[above]{$b$};
\node at (5,6){>};
\node at (5,6)[above]{$c$};

\node at (0, 1){$\wedge$};
\node at (0,1)[left]{$d$};
\node at (0, 3){$\wedge$};
\node at (0,3)[left]{$e$};
\node at (0, 5){$\wedge$};
\node at (0,5)[left]{$f$};

\node at (6, 1){$\vee$};
\node at (6,1)[right]{$f$};
\node at (6, 3){$\vee$};
\node at (6,3)[right]{$e$};
\node at (6,5){$\vee$};
\node at (6,5)[right]{$d$};

\node at (0, 0)[left]{$A$};
\node at (6, 0)[right]{$A$};
\node at (0, 6)[left]{$A$};
\node at (6, 6)[right]{$A$};
\end{tikzpicture}
\bigskip
\caption{A garrafa de Klein e a representa{\c c}\~ao planar da garrafa de Klein.}\label{Klein}
\end{figure}

\begin{figure}[H]
\begin{tikzpicture}[scale=0.8]

 \draw (0, 0) -- (6,0) -- (6,6) -- (0,6) -- (0,0); 

\node at (1,0){>};
\node at (1,0)[below]{$a$};
\node at (3,0){>};
\node at (3,0)[below]{$b$};
\node at (5,0){>};
\node at (5,0)[below]{$c$};

\node at (1,6){>};
\node at (1,6)[above]{$a$};
\node at (3,6){>};
\node at (3,6)[above]{$b$};
\node at (5,6){>};
\node at (5,6)[above]{$c$};

\node at (0, 1){$\wedge$};
\node at (0,1)[left]{$d$};
\node at (0, 3){$\wedge$};
\node at (0,3)[left]{$e$};
\node at (0, 5){$\wedge$};
\node at (0,5)[left]{$f$};

\node at (6, 1){$\vee$};
\node at (6,1)[right]{$f$};
\node at (6, 3){$\vee$};
\node at (6,3)[right]{$e$};
\node at (6,5){$\vee$};
\node at (6,5)[right]{$d$};

\node at (0, 0)[left]{$A$};
\node at (6, 0)[right]{$A$};
\node at (0, 6)[left]{$A$};
\node at (6, 6)[right]{$A$};

\coordinate (K) at (0,4);
\coordinate (L) at (6,2);
\node at (0,4) {$\bullet$};
\node at (6,2) {$\bullet$};
\node at (0,4) [left]{\textcolor{red}{$\alpha$}};
\node at (6,2) [right]{\textcolor{red}{$\alpha$}};
\draw [very thick, red] (K) -- (L) ;
\node at (3.1, 3.25)[right,red]{$C_1$};

\coordinate (M) at (4,6);
\coordinate (N) at (6,3.5);
\node at (4,6) {$\bullet$};
\node at (6,3.5) {$\bullet$};
\node at (4,6) [above]{\textcolor{blue}{$\beta$}};
\node at (6,3.5) [right]{\textcolor{blue}{$\gamma$}};
\draw [very thick, blue] (M) -- (N) ;
\node at (4.85, 5)[left,blue]{$C_2$};

\coordinate (P) at (4,0);
\coordinate (Q) at (0,2.5);
\node at (4,0) {$\bullet$};
\node at (0,2.5) {$\bullet$};
\node at (4,0) [below]{\textcolor{blue}{$\beta$}};
\node at (0,2.5) [left]{\textcolor{blue}{$\gamma$}};
\draw [very thick, blue] (P) -- (Q) ;
\node at (2,1.35)[right,blue]{$C_2$};

\node at (0,1.7) {$\bullet$};
\node at (0,1.7) [left]{$z_1$};
\node at (1.8,0.8) {$\bullet$};
\node at (1.8,0.8) [above]{$x_2$};
\draw[thick] plot[smooth] coordinates {(0,1.7)(0.6,1.1) (1,0.85)(1.5,0.8) (1.8,0.8)};     

\node at (6,4.3) {$\bullet$};
\node at (6,4.3) [right]{$z_1$};
\node at (5.3,5.5) {$\bullet$};
\node at (5.3,5.5) [right]{$x_1$};
\draw[thick] plot[smooth] coordinates {(5.3,5.5)(5.4,5.1) (5.6,4.7) (6,4.3)};     

\node at (2.3,0) {$\bullet$};
\node at (2.3,0) [below]{$z_2$};
\draw[thick,dotted] plot[smooth] coordinates {(1.8,0.8)(2.2,0.4) (2.3,0)};    

\node at (2.3,6) {$\bullet$};
\node at (2.3,6) [above]{$z_2$};
\node at (4,4.3) {$\bullet$};
\node at (4,4.3) [right]{$x_3$};
\draw[thick,dotted] plot[smooth] coordinates {(2.3,6)(2.4,5.8) (3.1,4.5) (4,4.3)};  

\node at (6,2.5) {$\bullet$};
\node at (6,2.5) [right]{$z_3$};
\draw[thick,dashed] plot[smooth] coordinates {(4,4.3)(4.6,3.15) (5.2,2.82)(6,2.5)};    

\node at (0,3.5) {$\bullet$};
\node at (0,3.5) [left]{$z_3$};
\node at (3.5,2) {$\bullet$};
\node at (3.5,2) [right]{$x_4$};
\draw[thick,dashed] plot[smooth] coordinates {(0,3.5)(1,2.7) (2,2.25)(3.5,2)};

\end{tikzpicture}
\bigskip
\caption{A garrafa de Klein  \'e de g\^enero $2$. As curvas $C_1$ e $C_2$ n\~ao desconectam a garrafa de Klein. Os pontos $x_1$, $x_2$, $x_3$ e $x_4$ est\~ao 
na mesma componente conexa. 
Qualquer curva suplementar fechada (homeomorfa a uma circunfer\^encia), disjunta de $C_1$ e $C_2$ desconecta a garrafa de Klein.}  \label{genero_garrafa_Klein}
\end{figure}
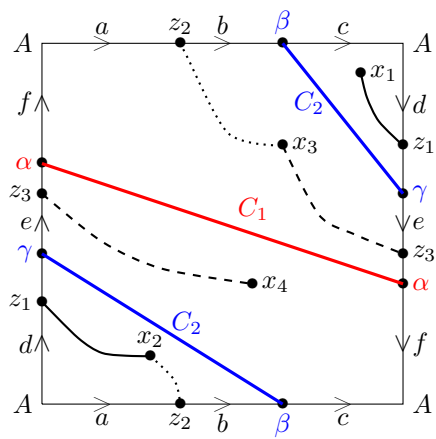

\section{Hist\'oria}\label{histoire}

\subsection{Antes de Cauchy}

O nome de ``f\'ormula de Euler'' vem do an\'uncio de Leonhard Euler em  14 de Novembro de 1750 
em uma carta a um amigo,  Goldbach do seguinte resultado.

\begin{theorem}\label{Euler's Theorem}
Seja $P$ um poliedro convexo, com  n\'umeros $n_0$ de v\'ertices, $n_1$ de arestas e $n_2$ de pol\'igonos, ent\~ao temos
\begin{equation} \label{Euler}
n_0 - n_1 + n_2 = 2.
\tag{2}
\end{equation}
\end{theorem}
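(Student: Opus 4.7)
The plan is to follow Cauchy's original strategy and reduce the statement to the planar identity announced as Theorem \ref{teo1}. Write $\chi = n_0 - n_1 + n_2$ for the alternating sum.

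First I would triangulate every polygonal face of $P$ by drawing $k-3$ diagonals in each $k$-gon; each diagonal adds exactly one edge and one $2$-face and therefore leaves $\chi$ unchanged. Hence one may assume that every face of $P$ is a triangle. Next, pick one face $F=(A_1,A_2,A_3)$, remove its interior, and \emph{flatten} what remains by central projection from a point $O$ placed just outside $P$ along the outward direction through the barycenter of $F$. Convexity of $P$ is used here in an essential way: it guarantees that every point of $\partial P \setminus \mathrm{int}(F)$ is seen by $O$ along a unique ray, so the projection is a homeomorphism onto the triangle $F$, viewed as a planar triangulated polygon $K$ whose boundary $K_0$ is $\partial F$. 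The triangulation of $\partial P \setminus \mathrm{int}(F)$ is transported to a triangulation of $K$ with the same $n_0$, $n_1$, $n_2$, so $\chi(K) = \chi(P) - 1$, the $-1$ accounting for the face $F$ we have removed.

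Now I would apply Theorem \ref{teo1} to the pair $(K,K_0)$, which yields $\chi(K) - \chi(K_0) = 1$. Since $K_0$ is the triangle $\partial F$, one has $\chi(K_0) = 3 - 3 = 0$, and hence $\chi(K) = 1$. Combining this with $\chi(K) = \chi(P) - 1$ gives $\chi(P) = 2$, which is exactly the identity (\ref{Euler}).

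The main obstacle is the flattening step: one must explain rigorously why the convexity of $P$ is exactly the right hypothesis for the central projection from $O$ to be a bijection, and even a homeomorphism, onto a planar triangulated polygon in the sense of Definition \ref{repplanar}. One must also verify that the image triangulation satisfies the requirements of Definition \ref{compsimp}, i.e.\ that two distinct simplices never project onto overlapping images; in the convex setting this follows because each ray issuing from $O$ meets $\partial P \setminus \mathrm{int}(F)$ in exactly one point. Once this geometric step is secured, the rest of the argument is purely combinatorial and is entirely absorbed into Theorem \ref{teo1}.
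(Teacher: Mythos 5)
Your proposal is correct and takes essentially the same route as the paper: remove one face of the convex polyhedron, flatten the remainder to a planar triangulated polygon $K$ (Cauchy's planar representation), observe that the boundary cycle $K_0$ has $n_0^{K_0}-n_1^{K_0}=0$, and apply Theorem \ref{teo1} to get $\chi(K)=1$, hence $n_0-n_1+n_2=2$ after restoring the removed face. The only cosmetic differences are that you triangulate the faces before flattening and spell out the flattening as a Schlegel-type central projection, whereas the paper triangulates after flattening and invokes Cauchy's construction (the ``camera'' description of Lakatos).
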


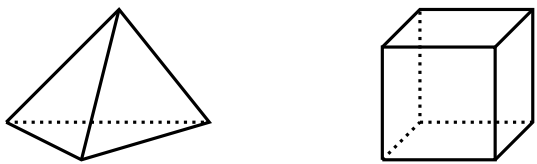
\begin{figure}[H]
\begin{tikzpicture}[scale=1]

\draw[very thick] (1, 0.5) -- (2,0) -- (3.7, 0.5) -- (2.5,2) -- (1, 0.5);
\draw[very thick]  (2,0) --  (2.5,2);
\draw[dotted, very thick] (1, 0.5) -- (3.7, 0.5);

\draw[very thick] (6,0) -- (7.5,0) -- (7.5,1.5) -- (6,1.5)-- (6,0) ;
\draw[very thick] (6,1.5) -- (6.5,2) -- (8,2) -- (8,0.5) -- (7.5,0) ;
\draw[very thick]  (7.5,1.5) -- (8,2);
\draw[dotted, very thick] (6,0) -- (6.5,0.5) -- (6.5,2);
\draw[dotted, very thick] (6.5,0.5) -- (8,0.5);
\end{tikzpicture}
\bigskip
\caption{A f\'ormula de Euler pelo tetraedro e pelo cubo.}\label{poliedros2}
\end{figure}

Na figura \ref{poliedros2}, temos dois exemplos de poliedros convexos, 
nos quais se pode verificar a rela\c c\~ao (\ref{Euler}). Pelo tetraedro, temos 
$$n_0 - n_1 + n_2 = 4 -6 + 4 =2,$$
pelo cubo temos 
$$n_0 - n_1 + n_2 = 8 - 12  + 6 =2.$$

A lenda (seria verdadeira?), providenciada em \cite{Cel}, 
 diz o seguinte:  foi um pouco por acaso que Euler descobriu a exist\^encia de uma 
rela\c c\~ao que liga os n\'umeros de faces, v\'ertices  e arestas em poliedros convexos 
(veja \cite{Cel}). 
Euler procurava classifica\c c\~oes para poliedros convexos e pensou em classific\'a-los 
de acordo com o n\'umero de 
faces. Posteriormente, para distinguir os poliedros convexos com o mesmo n\'umero de faces, ele primeiro prop\^os organiz\'a-los em poliedros do ``tipo prisma'', do ``tipo pir\^amide'' 
e associar todos os outros em uma terceira categoria. Os poliedros de quatro faces eram todos 
do ``tipo pir\^amide'' . Os poliedros com cinco faces foram divididos nos tipos ``prismas'' e ``pir\^amides''.
A partir de seis faces, as classifica\c c\~oes revelaram os tr\^es tipos de categorias: ``prismas'', 
``pir\^amides'' e outros.

Acreditando que os poliedros da terceira categoria n\~ao eram realmente todos iguais, 
ele buscou um segundo crit\'erio para classificar os poliedros com o mesmo n\'umero de faces. 
Tendo observado rapidamente que os poliedros com o mesmo n\'umero de faces nem todos tinham 
o mesmo n\'umero de v\'ertices, ele sugeriu distingui-los pelo n\'umero de v\'ertices.

Ele obteve como novas classes: Hexaedro
(poliedro de 6 faces) com 5,6,7 ou 8 v\'ertices, Heptaedro (poliedro de 7 faces) com 6 ate 10 v\'ertices, Octaedro (poliedro de 8 faces) com 6 ate 12 v\'ertices.

\begin{figure}[H]
\begin{tikzpicture}[scale=0.4]

\coordinate (A) at (0,4);
\coordinate (B) at (2,2.5);
\coordinate (C) at (2,0);
\coordinate (D) at (3,7);
\coordinate (E) at (5,4);

\draw (A)--(B)--(C)--(E)--(B)--(D)--(A)--(C);
\draw (D)--(E);
\draw[dashed] (A)--(E);
\node at (2.5,-1.5) {5 v\'ertices};

\coordinate (F) at (8,2);
\coordinate (G) at (9,6);
\coordinate (H) at (10,0);
\coordinate (J) at (13,0);
\coordinate (K) at (12,6);
\coordinate (L) at (14,2);

\draw (G)--(F)--(H)--(G)--(K)--(H)--(J)--(K)--(L)--(J);
\draw[dashed] (F)--(L);
\node at (10.5,-1.5) {6 v\'ertices};

\coordinate (M) at (17,2);
\coordinate (N) at (18,0);
\coordinate (O) at (20,0);
\coordinate (P) at (20.2,2.5);
\coordinate (Q) at (22,2);
\coordinate (R) at (19,7);

\draw (R)--(M)--(N)--(R)--(O)--(N);
\draw (O)--(Q)--(R);
\draw[dashed] (M)--(P)--(Q);
\draw[dashed] (R)--(P);
\node at (19.5,-1.5) {6 v\'ertices};

\coordinate (S) at (25,2);
\coordinate (T) at (27,0);
\coordinate (U) at (30,2);
\coordinate (V) at (28,3);
\coordinate (W) at (26,6);
\coordinate (X) at (27,5);
\coordinate (Y) at (29,7);

\draw (T)--(S)--(W)--(X)--(T)--(U)--(Y)--(X);
\draw (W)--(Y);
\draw[dashed] (S)--(V)--(U);
\draw[dashed] (V)--(Y);
\node at (27.5,-1.5) {7 v\'ertices};

\end{tikzpicture}
\end{figure}
\begin{figure}[H]
\begin{tikzpicture}[scale=0.5]

\coordinate (A) at (0,4);
\coordinate (B) at (1,2);
\coordinate (C) at (3,1);
\coordinate (D) at (2,6);
\coordinate (E) at (7,3);
\coordinate (F) at (5,7);
\coordinate (X) at (4,4);

\draw (D)--(A)--(B)--(C)--(E)--(F)--(D)--(B);
\draw (D)--(C);
\draw[dashed] (A)--(X)--(E);
\draw[dashed] (F)--(X);
\node at (2.5,-1.5) {7 v\'ertices};

\coordinate (G) at (10,3);
\coordinate (H) at (11,6);
\coordinate (K) at (12,0);
\coordinate (J) at (12.5,3);
\coordinate (L) at (15,1);
\coordinate (M) at (13,4);
\coordinate (N) at (13,7);
\coordinate (O) at (16,3);

\draw (H)--(G)--(K)--(J)--(H)--(N)--(O)--(L)--(K);
\draw(J)--(L);
\draw[dashed] (G)--(M)--(O);
\draw[dashed] (N)--(M);
\node at (10.5,-1.5) {8 v\'ertices};

\coordinate (P) at (19,5);
\coordinate (Q) at (21,6.5);
\coordinate (R) at (24,7);
\coordinate (S) at (23,5);
\coordinate (T) at (19,2);
\coordinate (U) at (21,0.5);
\coordinate (V) at (23,3);
\coordinate (W) at (25,2);

\draw (Q)--(P)--(T)--(U)--(Q)--(R)--(W)--(U);
\draw[dashed] (P)--(S)--(V)--(T);
\draw[dashed] (V)--(W);
\draw[dashed] (R)--(S);
\node at (19.5,-1.5) {8 v\'ertices};

\end{tikzpicture}
\bigskip
\caption{Hexaedros (6 faces) com 5,6,7 e 8 v\'ertices }\label{hexaedros}
\end{figure}
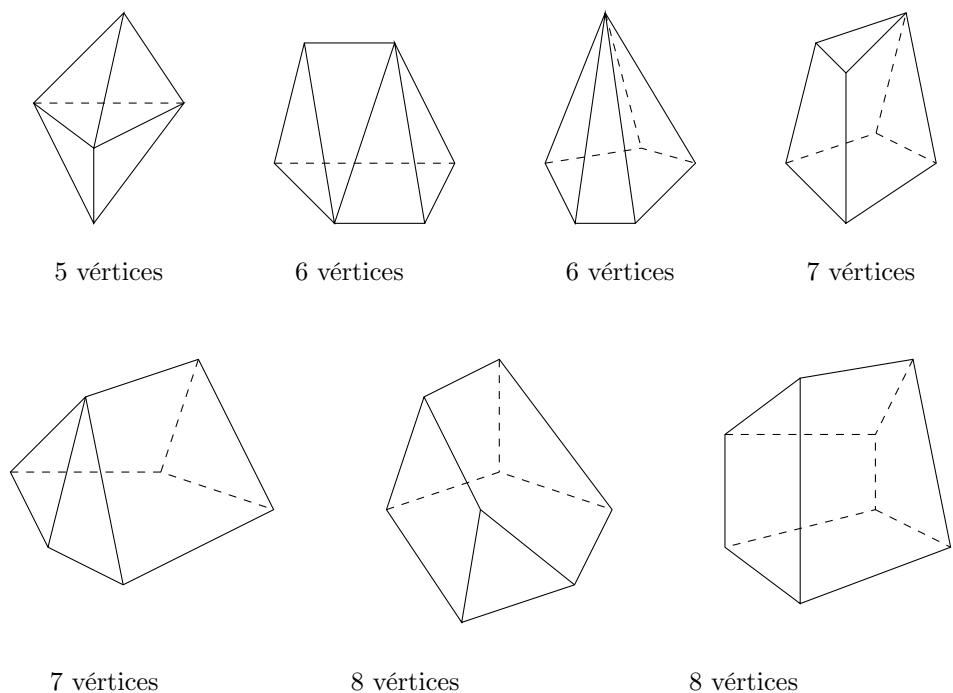

Para essas novas classifica\c c\~oes, Euler tamb\'em descobriu que alguns poliedros que tinham 
o mesmo n\'umero de faces e o mesmo n\'umero de v\'ertices n\~ao eram semelhantes. 
Por exemplo, estes s\~ao poliedros com 6, 7 ou 8 v\'ertices para hexaedros (figura 
\ref{hexaedros}). Naturalmente, Euler pensou no n\'umero 
de arestas como um terceiro crit\'erio para separar poliedros com o mesmo n\'umero de faces e 
o mesmo n\'umero de v\'ertices.

Para poliedros com seis faces e oito v\'ertices, ele ficou surpreso ao perceber que o n\'umero 
de arestas era o mesmo (seis faces, oito v\'ertices e doze arestas).

Sendo incapaz de distingui-los e de acreditar em uma anomalia das seis faces, ele imaginou usar o crit\'erio 
``arestas''
 para subclassificar os poliedros com sete faces e dez v\'ertices. Novamente, a mesma caracter\'istica 
 apareceu. Todos tinham o mesmo n\'umero de arestas (sete faces, dez v\'ertices, quinze arestas).
 Observando que era o mesmo para as outras fam\'ilias com o mesmo n\'umero de faces e v\'ertices, ele conjeturou a exist\^encia de uma rela\c c\~ao que liga o n\'umero de faces, o n\'umero de v\'ertices  e o n\'umero de arestas. -- Fim da lenda. 
\medskip

Temos duas perguntas. 
Seria que Euler era o primeiro a enunciar a f\'ormula? 
Quem providenciou a primeira prova da f\'ormula? 
De fato, h\'a v\'arias perguntas e pol\^emicas sobre a f\'ormula  de Euler.

\medskip 

Vamos \`a primeira pergunta: 
 {\it Quem foi o primeiro a expressar a f\'ormula? } 

\medskip 
Alguns autores (veja \cite{Eve}, \S 3.12; \cite{Lie}, p. 90) dizem que \'e poss\'ivel que 
Arquimedes ($\sim$ 287 AC, $\sim$ 212 AC)  j\'a conhecia a f\'ormula. O que se sabe \'e que 
Pit\'agoras ($\sim$ 580 AC, $\sim$ 495 AC) j\'a conheceu conhecia 
os tr\^es ``primeiros'' poliedros regulares e 
Plat\~ao ($\sim$ 428 AC, $\sim$ 348 AC) explicou a harmonia do mundo pela exist\^encia dos cinco 
poliedros regulares convexos: tetraedro, cubo, octaedro, dodecaedro e icosaedro. 

Alguns autores dizem que a f\'ormula 
j\'a era conhecida por Descartes (1596 -- 1650).
 De fato, Descartes, no manucrito {\it ``De solidorum elementis''} \cite{De}, 
tinha enunciado uma f\'ormula pr\'oxima que se mostra no seguinte teorema.

\begin{theorem}\label{Descartes}
Se tomarmos o \^angulo reto como a unidade,  ent\~ao a soma dos \^angulos de todas as faces 
de um poliedro convexo ser\'a igual a quatro vezes  do n\'umero de v\'ertices  ($n_0$) menos 2, {isto \'e}, 
$4(n_0 -2)$.
\end{theorem}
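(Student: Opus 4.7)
O plano é deduzir o teorema de Descartes diretamente da fórmula de Euler (Teorema \ref{Euler's Theorem}) por uma contagem elementar, face a face, dos ângulos internos. Os dois ingredientes clássicos que vou combinar são: primeiro, a soma dos ângulos internos de um polígono de $k$ lados vale $(k-2)\pi$, isto é, $2(k-2)$ ângulos retos; segundo, o fato combinatório de que, em qualquer poliedro, cada aresta pertence a exatamente duas faces de dimensão 2 (consequência imediata da definição adotada de poliedro).

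Primeiramente, eu numeraria as faces poligonais $F_1, \ldots, F_{n_2}$ e denotaria por $k_j$ o número de lados (equivalentemente, de vértices) de $F_j$. Em unidades de ângulo reto, a soma total $S$ dos ângulos internos de todas as faces se escreveria
$$ S \;=\; \sum_{j=1}^{n_2} 2(k_j - 2) \;=\; 2\sum_{j=1}^{n_2} k_j \;-\; 4\,n_2. $$
Em seguida, usaria a identidade combinatória $\sum_{j} k_j = 2\, n_1$ (cada aresta é contada duas vezes, uma por cada face que ela delimita) para reduzir a expressão a $S = 4 n_1 - 4 n_2 = 4(n_1 - n_2)$. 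Finalmente, aplicaria a fórmula de Euler $n_0 - n_1 + n_2 = 2$ para substituir $n_1 - n_2$ por $n_0 - 2$, obtendo $S = 4(n_0 - 2)$, como desejado.

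O ponto delicado desta estratégia é que o argumento depende da fórmula de Euler, que só será provada mais adiante no artigo pelo método de Cauchy; de fato, a contagem acima mostra que os teoremas de Euler e de Descartes são logicamente equivalentes, de modo que qualquer demonstração de um fornece a outra como consequência imediata. Uma abordagem genuinamente independente, mais próxima do espírito histórico de Descartes, passaria por estabelecer diretamente que a soma dos defeitos angulares $\sum_v (4 - s_v)$, onde $s_v$ denota a soma (em ângulos retos) dos ângulos das faces incidentes no vértice $v$, vale exatamente $8$; isto poderia ser feito por projeção central do poliedro convexo sobre uma esfera circunscrita, aplicando-se em seguida o teorema do excesso esférico de Girard face a face e somando as áreas esféricas, que totalizam $4\pi$. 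Essa é a parte tecnicamente mais exigente de uma prova ``à Descartes'', mas é dispensável no contexto do presente artigo, onde a fórmula de Euler será demonstrada de forma autônoma.
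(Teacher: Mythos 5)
Sua proposta está correta e segue essencialmente o mesmo caminho do artigo: a mesma contagem face a face, com a soma dos ângulos de um polígono de $k_i$ lados igual a $(k_i-2)\pi$ e a identidade $\sum_i k_i = 2n_1$, que é exatamente o conteúdo da ``justificativa'' do artigo para a equivalência entre a fórmula de Euler e o teorema de Descartes. Você apenas percorre a implicação na direção oposta (Euler $\Rightarrow$ Descartes, enquanto o texto a apresenta como Descartes $\Rightarrow$ Euler dentro da equivalência), e sua ressalva sobre a dependência da fórmula de Euler, bem como o esboço alternativo via excesso esférico de Girard, está em consonância com o artigo, que tampouco fornece uma prova independente do teorema de Descartes.
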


\begin{proof} [\bf {Justificativa do fato ``a f\'ormula (\ref{Euler}) \'e equivalente ao teorema \ref{Descartes}''}] 
Por um lado, o teorema de Descartes diz que a soma dos \^angulos de todas as faces vale $2 (n_0 -2) \pi$. 
 
Por outro lado, denotamos por $i=1, \ldots, n_2$ 
 as faces do poliedro convexo. {Para cada face $i$, o n\'umero dos v\'ertices, que tamb\'em \'e o n\'umero das arestas da face, ser\'a denotado por $k_i$}. Usamos, para cada face, a  seguinte propriedade de pol\'igonos convexos. Em um pol\'igono convexo com $k_i$ 
 lados, a soma dos \^angulos vale $(k_i -2)\pi$. Ent\~ao a soma de todos  os 
\^angulos de todas as faces do poliedro vale $\sum_{i=1}^{n_2} (k_i -2)\pi$, ou seja $(2n_1 - 2n_2) \pi$, 
observando que $\sum_{i=1}^{n_2} k_i = 2n_1$, uma vez que cada aresta do poliedro aparece em duas faces 
do poliedro. Assim, obtemos a f\'ormula (\ref{Euler}). 
\end{proof}

Descartes n\~ao publicou o manuscrito no tempo dele. A vers\~ao original do manuscrito  desapareceu, mas uma c\'opia foi redescoberta em 1860 com p\'apeis deixados 
por Leibnitz (1646-1716). Esta c\'opia sofreu complicados contratempos, em particular uma 
imers\~ao no rio Seine em Paris (veja  \cite{Fo}, \cite{dJ1}). 
Alguns autores dizem que Descartes descobriu a f\'ormula (\ref{Euler}) como uma aplica\c c\~ao 
de seu teorema \ref{Descartes}. 
Por esta raz\~ao, \`as vezes
a f\'ormula  (\ref{Euler}) \'e chamada de ``{\it f\'ormula de Descartes-Euler}''.
 Outros autores, por exemplo, Malkevich \cite{Mal1}, 
afirmou que ``{\it 
Though Descartes did discover facts about 3-dimensional polyhedra that would have enabled him to deduce Euler's f\'ormula, he did not take this extra step. With hindsight it is often difficult to see how a talented mathematician of an earlier era did not make a step forward that with today's insights seems natural, however, it often happens.}'' 
 Mas sabendo que alguns documentos de Descartes  desapareceram, 
ningu\'em pode decidir se Descartes conheceu ou n\~ao a f\'ormula. 
Ent\~ao, a resposta para a primeira pergunta tem sido desconhecida at\'e hoje em dia.

\medskip 

Discutimos agora a segunda pergunta: {\it Quem foi o primeiro 
que providenciou uma prova correta da f\'ormula   (\ref{Euler}) ?}

\medskip 

\subsection{A ``prova'' de Euler}\label{provaEuler}

Dois anos depois de ter escrito
a f\'ormula, Euler publicou uma prova \cite{Eu2,Eu3}, 
que consiste em eliminar um de cada vez cada v\'ertice do poliedro, removendo 
a pir\^amide da qual \'e o v\'ertice. A cada etapa, a soma $n_0-n_1+n_2$ fica inalterada. 

Aqui damos o exemplo do cubo, copiado do livro de Richeson \cite{Ri}. 
Na figura \ref{dEuler} (b), se elimina o v\'ertice $A$ assim como a pir\^amide 
(em branco) da qual $A$ \'e o v\'ertice. Na figura \ref{dEuler} (c), temos o mesmo processo para 
eliminar o v\'ertice $B$, e assim por diante, at\'e chegar a um tetraedro. 
Para o tetraedro temos $n_0-n_1+n_2 = 2$, 
ent\~ao  obtemos a f\'ormula. 

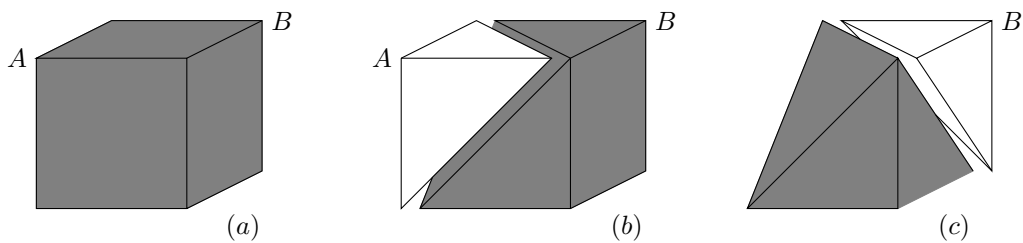
\begin{figure} [h!] 
\begin{tikzpicture}  [scale= 0.5] 

\coordinate (A) at (0,0);
\coordinate (B) at (4,0);
\coordinate  (C) at (6,1);
\coordinate  (D) at (0,4);
\coordinate (E) at (4,4);
\coordinate (F) at (2,5);
\coordinate (G) at (6,5);

\filldraw [gray] (A) -- (D) -- (F) -- (G) -- (C) -- (B) -- (A);
\draw (D)--(A) -- (B) -- (C)--(G)--(F) -- (D) -- (E) --(B);
\draw (E) -- (G) ;
\node at (0,4)[left]{$A$};
\node at (6,5)[right]{$B$};
\node at (5.5,-0.5){$(a)$};
\end{tikzpicture}  
\qquad 
 \begin{tikzpicture}  [scale= 0.5] 

\coordinate (A) at (0,0);
\coordinate (B) at (4,0);
\coordinate  (C) at (6,1);
\coordinate  (D) at (0,4);
\coordinate (E) at (4,4);
\coordinate (F) at (2,5);
\coordinate (G) at (6,5);
\coordinate  (H) at (-0.5,0);
\coordinate (J) at (-0.5,4);
\coordinate (K) at (1.5,5);
\coordinate (L) at (3.5,4);

\coordinate (M) at (intersection of A--F and H--L);
\coordinate (N) at (intersection of K--L and A--F);

\filldraw [gray] (A) -- (M) -- (L) -- (N) -- (F) -- (G) -- (C) -- (B) -- (A);
\draw (M)--(A) -- (B) -- (C)--(G)--(F) --  (E) --(B);
\draw (G) -- (E) -- (A) ;
\draw (J) -- (H) -- (L) -- (K) -- (J) ;
\draw (J) --   (L)  ;
\node at (-0.5,4)[left]{$A$};
\node at (6,5)[right]{$B$};
\node at (5.5,-0.5){$(b)$};
\end{tikzpicture}  
\qquad
\begin{tikzpicture}  [scale= 0.5] 

\coordinate (A) at (0,0);
\coordinate (B) at (4,0);
\coordinate  (C) at (6,1);
\coordinate  (D) at (0,4);
\coordinate (E) at (4,4);
\coordinate (F) at (2,5);
\coordinate (G) at (6,5);
\coordinate (P) at (2.5,5);
\coordinate (Q) at (4.5,4);
\coordinate  (R) at (6.5,5);
\coordinate  (S) at (6.5,1);
\coordinate (T) at (intersection of F--E and P--S);
\coordinate (U) at (intersection of C--E and P--S);

\filldraw [gray] (A) -- (F) -- (E) -- (C) -- (B) -- (A);

\draw (A)--(F) --  (E) --(A) -- (B)-- (E)-- (C) ;
\draw (R)--(P) -- (Q) -- (R) -- (S)--(Q);
\draw(P)--(T);
\draw (U)--(S);
\node at (6.5,5)[right]{$B$};
\node at (5.5,-0.5){$(c)$};

\end{tikzpicture}  

\caption{ A prova de Euler. Elimina\c c\~ao successiva de um v\'ertice 
assim como da pir\^amide da qual ele \'e o v\'ertice.}
\label{dEuler}
\end{figure}

A prova de Euler est\'a bem reproduzida por exemplo em \cite{Leb}  e \cite{Ri}. 
Mas esta prova n\~ao foi correta. 
No seu livro \cite{Ri}, Richeson providencia uma descri\c c\~ao clara dos  problemas na prova.
Ele escreve que estes problemas foram resolvidos 
por Samelson  e por Francese e Richardson (\cite{Sam,FR}). 

A primeira prova correta da f\'ormula (\ref{Euler}) foi 
dada por Legendre \cite{Leg} na primeira edi\c c\~ao de seu livro de 
{\it \'El\'ements de G\'eometrie } (1794) (veja 
por exemplo \cite{Leb} ou \cite{Ri} Cap\'itulo 10 para uma 
apresenta\c c\~ao da prova de Legendre).  
O argumento de Legendre era geom\'etrico, da mesma maneira da 
 prova de Descartes do teorema \ref{Descartes}. A \'unica diferen\c ca entre o 
racioc\'\i nio de Descartes e de Legendre \'e que Descartes 
raciocinou sobre a representa\c c\~ao esf\'erica 
do poliedro (poliedro polar),  enquanto Legendre raciocinou sobre o poliedro mesmo. 
A passagem do poliedro $K$ 
(Legendre) ao poliedro polar de $K$ (Descartes) faz uma permuta\c c\~ao 
de $n_0$ e $n_2$, sem modificar $n_1$. 
 Por esta raz\~ao, alguns autores dizem que a prova da f\'ormula (\ref{Euler}) 
deveria ser chamada de ``Descartes-Legendre''.

\subsection{\'Epoca de Cauchy. M\'etodo de Cauchy - A primeira prova combinat\'oria e topol\'ogica}\label{methode_Cauchy}


Em fevereiro de 1811, aos 22 anos de idade, Cauchy, que j\'a era   
 engenheiro {\it des Ponts et Chauss\'ees}, ministrou uma palestra intitulada {\it Recherches sur les poly\`edres}  na {\it \'Ecole Polytechnique}, em Paris. Esta palestra foi publicada em 1813 
 no {\it Journal de l'\'Ecole Polytechnique} \cite{Ca1}. 
Cauchy  publicou uma prova
combinat\'oria e  topol\'ogica da f\'ormula de Euler (\ref{Euler}). 
A bonita prova de Cauchy \'e reproduzida
em  v\'arios livros (veja, em particular, \cite{Ri}, Cap\'itulo 12 para uma apresenta\c c\~ao com coment\'arios).  

\begin{proof} \label{proof_Cauchy}
A primeira etapa na prova de Cauchy \'e da representa\c c\~ao planar. 
Parece que Cauchy foi o primeiro a usar esta ideia. 

Sendo um poliedro convexo $P$ 
homeomorfa \`a esfera, escolhemos uma face ${\mathcal F}$ de dimens\~ao 2 do poliedro. 
Removemos esta face e constru\'imos  
 a representa\c c\~ao planar $K$ associada \`a face escolhida.  
Lakatos \cite{Lak} explica a constru\c c\~ao da seguinte maneira.   
Coloque uma c\^amera acima da face removida. 
A representa\c c\~ao planar aparecer\'a na fotografia. 
 Note que esta ideia de representa\c c\~ao planar \'e parecida da proje\c c\~ao estereogr\'afica 
 (veja se\c c\~ao \ref{caso_esfera}).

\begin{figure} [H]
\begin{tikzpicture}  [scale= 0.7] 

\coordinate (A) at (0,0);
\coordinate (T) at (4,0);
\coordinate  (C) at (2,1);
\coordinate  (D) at (6,1);
\coordinate (E) at (0,3);
\coordinate (F) at (4, 3);
\coordinate  (G) at (2,4);
\coordinate  (H) at (6,4);
\coordinate (J) at (6.3,1.5);

\node at (A)[left]{$A$};
\node at (T)[below]{$B$};
\node at (C)[below]{$C$};
\node at (D)[right]{$D$};
\node at (E)[left]{$E$};
\node at (F)[above]{$F$};
\node at (G)[above]{$G$};
\node at (H)[above]{$H$};
\node at (J)[right]{$J$};

\node at (3,3.5)[red]{${\mathcal F}$};

\draw [very thick] (E) -- (A) -- (T) -- (F) -- (E) -- (G) -- (H) ; 
\draw [very thick] (H) -- (J) --  (F)--(H) ;
\draw [very thick] (T) -- (J);
\draw [very thick] (J) -- (D) -- (T);

\draw (A) -- (C) --(G);
\draw (C) -- (D) --(H);

\end{tikzpicture}   
\caption{O poliedro $P$}\label{poliedre-Cauchy}
\end{figure}
\qquad
\begin{figure}[H]
\begin{tikzpicture}  [scale= 0.7] 

\coordinate (A) at (0,0);
\coordinate (T) at (4,0);
\coordinate  (C) at (2,1);
\coordinate  (D) at (6,1);
\coordinate (E) at (0,3);
\coordinate (F) at (4, 3);
\coordinate  (G) at (2,4);
\coordinate  (H) at (6,4);
\coordinate (J) at (6.3,1.5);
\coordinate (J2) at (6.3,1.55);

\coordinate  (K) at (3,5.5);
\coordinate (L) at (-4.5,-0.75);
\coordinate (M) at (5.5,-0.75);
\coordinate  (N) at (0.5,1.75);
\coordinate  (O) at (10.5,1.75);
\coordinate (P) at (7.1,0.63);

\coordinate (R) at (intersection of N--L and A--E);
\coordinate (S) at (intersection of H--J and N--O);

\node at (A)[left]{$A$};
\node at (T)[below]{$B$};
\node at (C)[below]{$C$};
\node at (D)[below]{$D$};
\node at (E)[left]{$E$};
\node at (F)[above]{$F$};
\node at (G)[above]{$G$};
\node at (H)[above]{$H$};
\node at (J2)[right]{$J$};

\node at (L)[left]{$E'$};
\node at (M)[right]{$F'$};
\node at (0.4,1.75)[above]{$G'$};
\node at (O)[right]{$H'$};

\draw [very thick] (E) -- (A) -- (T) -- (F) -- (E) -- (G) -- (H) ; 
\draw [very thick] (H) -- (J) --  (F)--(H) ;
\draw [very thick] (T) -- (J);
\draw [very thick] (J) -- (D) -- (T);

\draw (A) -- (C) --(G);
\draw (C) -- (D) --(H);

\draw [blue](K)--(L) ;
\draw[blue] (K)--(G);
\draw [blue](K) --(O) ;
\draw [blue](K) --(M) ;
\draw[dashed,blue] (N) --(G);
\draw [blue](K) -- (P) ;

\draw [red] (L)-- (R);
\draw [red] (S)-- (O);
\draw [dashed,red] (R)--(N)-- (S);
\draw[dashed,red] (N) --(C);
\draw [red] (L) -- (M)-- (O);
\draw [red] (L) --(A) ;
\draw[dashed,red]  (N) --(S) ;
\draw [red] (T) -- (M);
\draw [red](O) -- (D) ;
\draw [red](D) -- (P) --(T);
\draw [red](M) -- (P) --(O);

\end{tikzpicture}   
\bigskip
\caption {Representa\c c\~ao planar seguido Cauchy}\label{rep-Cauchy}
\end{figure}
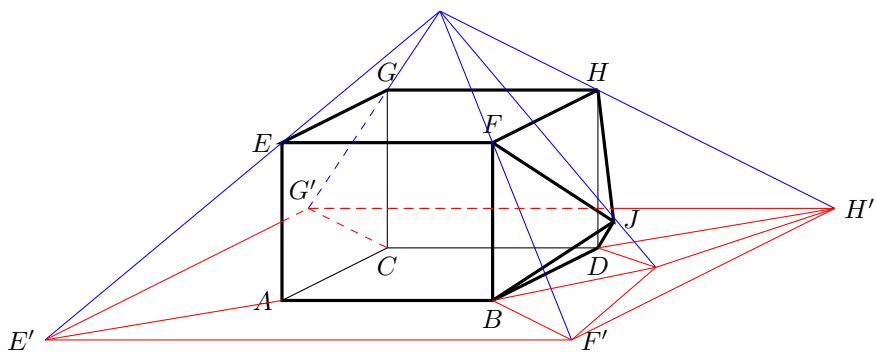

O buraco $\B$ formado pela retirada da face ${\mathcal F}$
aparece ao redor da representa\c c\~ao planar (veja figura \ref{poligone-Cauchy}, 
 onde o buraco esta em azul).

\begin{figure}[H]
\begin{tikzpicture}  [scale= 0.8] 
\coordinate (A) at (1.7,1.2);
\coordinate (T) at (4.2,1.2);
\coordinate  (C) at (1.7,3.3);
\coordinate  (D) at (4.2,3.3);
\coordinate (E) at (0,0);
\coordinate (F) at (6,0);
\coordinate  (G) at (0,4.5);
\coordinate  (H) at (6,4.5);
\coordinate (J) at (5,2.25);

\coordinate (L) at (0,-1);
\coordinate (M) at (-1,0);
\coordinate  (N) at (-1,4.5);
\coordinate  (O) at (0,5.5);
\coordinate (P) at (6,5.5);
\coordinate (Q) at (7,4.5);
\coordinate  (R) at (7,0);
\coordinate  (S) at (6,-1);

\fill[fill=blue!20] (L) -- (O) -- (N) --(M);
\fill[fill=blue!20] (P) -- (Q) -- (R) -- (S);
\fill[fill=blue!20] (O) -- (G) -- (H)-- (P);
\fill[fill=blue!20] (L) -- (E) -- (F)-- (S);

\node at (A)[below]{$A$};
\node at (T)[below]{$B$};
\node at (C)[above]{$C$};
\node at (D)[above]{$D$};
\node at (E)[below]{$E'$};
\node at (F)[below]{$F'$};
\node at (G)[above]{$G'$};
\node at (H)[above]{$H'$};
\node at (J)[right]{$J'$};

\node at (1.7,-0.55)[right]{$\B$};

\draw [very thick] (E) -- (A) -- (T) -- (F) -- (E) -- (G) -- (H) ; 
\draw [very thick] (H) -- (J) --  (F)--(H) ;
\draw [very thick] (T) -- (J);
\draw [very thick] (J) -- (D) -- (T);

\draw [very thick] (A) -- (C) --(G);
\draw [very thick] (C) -- (D) --(H);

\end{tikzpicture}  
\bigskip
\caption{O pol\'igono $K$, resultado do processo de representa\c c\~ao planar. }\label{poligone-Cauchy}
\end{figure}
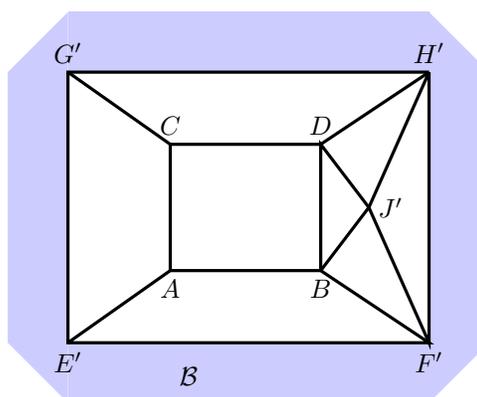

Como j\'a tiramos a face ${\mathcal F}$ que \'e 
um pol\'igono (aberto), ent\~ao j\'a temos $1$ na conta da soma de $n_0 -n_1 + n_2$. 
A etapa seguinte, na prova de Cauchy \'e definir uma triangula\c c\~ao de $K$ 
pela subdivis\~ao de todos os pol\'igonos. Note que, depois ter triangulado, 
a some alterada $n_0 -n_1 + n_2$ n\~ao muda. 

\begin{figure}[H]
\begin{tikzpicture}  [scale= 0.8] 
\coordinate (A) at (1.7,1.2);
\coordinate (T) at (4.2,1.2);
\coordinate  (C) at (1.7,3.3);
\coordinate  (D) at (4.2,3.3);
\coordinate (E) at (0,0);
\coordinate (F) at (6,0);
\coordinate  (G) at (0,4.5);
\coordinate  (H) at (6,4.5);
\coordinate (J) at (5,2.25);

\coordinate (L) at (0,-1);
\coordinate (M) at (-1,0);
\coordinate  (N) at (-1,4.5);
\coordinate  (O) at (0,5.5);
\coordinate (P) at (6,5.5);
\coordinate (Q) at (7,4.5);
\coordinate  (R) at (7,0);
\coordinate  (S) at (6,-1);

\fill[fill=blue!20] (L) -- (O) -- (N) --(M);
\fill[fill=blue!20] (P) -- (Q) -- (R) -- (S);
\fill[fill=blue!20] (O) -- (G) -- (H)-- (P);
\fill[fill=blue!20] (L) -- (E) -- (F)-- (S);

\node at (A)[below]{$A$};
\node at (T)[below]{$B$};
\node at (C)[above]{$C$};
\node at (D)[above]{$D$};
\node at (E)[below]{$E'$};
\node at (F)[below]{$F'$};
\node at (G)[above]{$G'$};
\node at (H)[above]{$H'$};
\node at (J)[right]{$J'$};

\draw [very thick] (E) -- (A) -- (T) -- (F) -- (E) -- (G) -- (H) ; 
\draw [very thick] (H) -- (J) --  (F)--(H) ;
\draw [very thick] (T) -- (J);
\draw [very thick] (J) -- (D) -- (T);

\draw [very thick] (A) -- (C) --(G);
\draw [very thick] (C) -- (D) --(H);

\draw (G)--(D);
\draw (A) -- (G);
\draw (C)--(T)--(E);

\node at (1.7,-0.55)[right]{$\B$};

\end{tikzpicture}   
\bigskip
\caption{Triangula\c c\~ao de $K$.}\label{Cauchytriangule}
\end{figure}

Posteriormente, estenderemos o buraco cuidando de manter  a borda tal que ela \'e homeomorfa a uma circunfer\^encia. Assim, o estendimento consiste em duas  
opera\c c\~oes que  descreveremos  a seguir. 

A borda de $\B$ est\'a  constitu\'ida de arestas  com a seguinte propriedade.  
Cada aresta \'e uma face de um tri\^angulo que tem somente esta aresta 
como face comum com o buraco $\B$. 
Por exemplo, na figura \ref{figura01}, 
temos dois exemplos de um  tri\^angulo $\sigma$ que 
tem a aresta $\tau$ como face comum com o buraco $\B$. 

A ``opera\c c\~ao I'' consiste  em tirar do poliedro $K$ tal tri\^angulo $\sigma$  juntamente 
com a aresta $\tau$ correspondente e assim o buraco se estende. 

Observamos que a ``opera\c c\~ao I'' n\~ao muda a soma $n_0 - n_1 + n_2$, pois
$n_0$ n\~ao muda enquanto $n_1$ e $n_2$ diminuem 
de $1$, ent\~ao $n_0 - n_1 + n_2$ mante-se igual. 

\begin{figure}[H]
\begin{tikzpicture}  [scale= 0.8] 
\coordinate (A) at (1.7,1.2);
\coordinate (T) at (4.2,1.2);
\coordinate  (C) at (1.7,3.3);
\coordinate  (D) at (4.2,3.3);
\coordinate (E) at (0,0);
\coordinate (F) at (6,0);
\coordinate  (G) at (0,4.5);
\coordinate  (H) at (6,4.5);
\coordinate (J) at (5,2.25);

\coordinate (L) at (0,-1);
\coordinate (M) at (-1,0);
\coordinate  (N) at (-1,4.5);
\coordinate  (O) at (0,5.5);
\coordinate (P) at (6,5.5);
\coordinate (Q) at (7,4.5);
\coordinate  (R) at (7,0);
\coordinate  (S) at (6,-1);

\fill[fill=blue!20] (L) -- (O) -- (N) --(M);
\fill[fill=blue!20] (P) -- (Q) -- (R) -- (S);
\fill[fill=blue!20] (O) -- (G) -- (H)-- (P);
\fill[fill=blue!20] (L) -- (E) -- (F)-- (S);

\node at (A)[below]{$A$};
\node at (T)[below]{$B$};
\node at (C)[above]{$C$};
\node at (D)[above]{$D$};
\node at (E)[below]{$E'$};
\node at (F)[below]{$F'$};
\node at (G)[above]{$G'$};
\node at (H)[above]{$H'$};
\node at (J)[right]{$J'$};

\draw [very thick] (E) -- (A) -- (T) -- (F) -- (E) -- (G) -- (H) ; 
\draw [very thick] (H) -- (J) --  (F) ;
\draw [very thick] (T) -- (J);
\draw [very thick] (J) -- (D) -- (T);

\draw [very thick] (A) -- (C) --(G);
\draw [very thick] (C) -- (D) --(H);

 \filldraw[fill=pink] (J)--(H)--(F);
\draw [very thick,red] (H) -- (F) ;
\node at (6.2,2.25){$\tau$};
\node at (5.55,2.5){$\sigma$};

\draw  [very thick] (G)--(D);
\draw  [very thick] (A) -- (G);
\draw  [very thick] (C)--(T)--(E);
\node at (1.7,-0.55)[right]{$\B$};

\end{tikzpicture}   
\qquad \qquad
\begin{tikzpicture}  [scale= 0.8] 
\coordinate (A) at (1.7,1.2);
\coordinate (T) at (4.2,1.2);
\coordinate  (C) at (1.7,3.3);
\coordinate  (D) at (4.2,3.3);
\coordinate (E) at (0,0);
\coordinate (F) at (6,0);
\coordinate  (G) at (0,4.5);
\coordinate  (H) at (6,4.5);
\coordinate (J) at (5,2.25);

\coordinate (L) at (0,-1);
\coordinate (M) at (-1,0);
\coordinate  (N) at (-1,4.5);
\coordinate  (O) at (0,5.5);
\coordinate (P) at (6,5.5);
\coordinate (Q) at (7,4.5);
\coordinate  (R) at (7,0);
\coordinate  (S) at (6,-1);

\fill[fill=blue!20] (L) -- (O) -- (N) --(M);
\fill[fill=blue!20] (P) -- (Q) -- (R) -- (S);
\fill[fill=blue!20] (O) -- (G) -- (H)-- (P);
\fill[fill=blue!20] (L) -- (E) -- (F)-- (S);

\draw [very thick] (E) -- (A) -- (T) -- (F) -- (E) -- (G) ; 
\draw [very thick] (H) -- (J) --  (F) ;
\draw [very thick] (T) -- (J);
\draw [very thick] (J) -- (D) -- (T);

\draw [very thick] (A) -- (C) --(G);
\draw [very thick] (C) -- (D) --(H);

\fill[fill=blue!20] (F) -- (J) -- (H);
 \filldraw[fill=pink] (G)--(H)--(D);
\draw [very thick,red] (H) -- (G) ;
\node at (3.1,4.7){$\tau$};
\node at (3.6,4){$\sigma$};

\node at (A)[below]{$A$};
\node at (T)[below]{$B$};
\node at (C)[above]{$C$};
\node at (D)[above]{$D$};
\node at (E)[below]{$E'$};
\node at (F)[below]{$F'$};
\node at (G)[above]{$G'$};
\node at (H)[above]{$H'$};
\node at (J)[right]{$J'$};

\draw  [very thick] (G)--(D);
\draw  [very thick] (A) -- (G);
\draw  [very thick] (C)--(T)--(E);
\node at (6.2,2.6)[right]{$\B$};

\end{tikzpicture}   
\bigskip
\caption{A ``opera\c c\~ao I''.}\label{figura01}
\end{figure}

Quando aparecer a situa\c c\~ao da figura \ref{figura03}, onde um tri\^angulo 
(aqui $\sigma$) tem duas arestas comuns $\tau_1$ e $\tau_2$  com o buraco, 
realizaremos a ``opera\c c\~ao II'' consistindo em 
tirar do poliedro $K$ tal tri\^angulo $\sigma$ junto com as duas arestas 
$\tau_1$ e $\tau_2$ e com o v\'ertice $H'$ que \'e o v\'ertice comum de $\tau_1$ e $\tau_2$. 
Assim, o buraco se estende. 

\begin{figure}[H]
\begin{tikzpicture}  [scale= 0.8] 
\coordinate (A) at (1.7,1.2);
\coordinate (T) at (4.2,1.2);
\coordinate  (C) at (1.7,3.3);
\coordinate  (D) at (4.2,3.3);
\coordinate (E) at (0,0);
\coordinate (F) at (6,0);
\coordinate  (G) at (0,4.5);
\coordinate  (H) at (6,4.5);
\coordinate (J) at (5,2.25);

\coordinate (L) at (0,-1);
\coordinate (M) at (-1,0);
\coordinate  (N) at (-1,4.5);
\coordinate  (O) at (0,5.5);
\coordinate (P) at (6,5.5);
\coordinate (Q) at (7,4.5);
\coordinate  (R) at (7,0);
\coordinate  (S) at (6,-1);

\fill[fill=blue!20] (L) -- (O) -- (N) --(M);
\fill[fill=blue!20] (P) -- (Q) -- (R) -- (S);
\fill[fill=blue!20] (O) -- (G) -- (H)-- (P);
\fill[fill=blue!20] (L) -- (E) -- (F)-- (S);

\draw [very thick] (E) -- (A) -- (T) -- (F) -- (E) -- (G) ; 
\draw [very thick] (J) --  (F) ;
\draw [very thick] (T) -- (J);
\draw [very thick] (J) -- (D) -- (T);

\draw [very thick] (A) -- (C) --(G);
\draw [very thick] (C) -- (D) ;

\fill[fill=blue!20] (F) -- (J) -- (H);
\fill[fill=blue!20] (G) -- (D) -- (H);
 \filldraw[fill=pink] (D)--(H)--(J);
\draw [very thick,red] (D) -- (H) -- (J) ;
\node at (4.9,4.1){$\tau_1$};
\node at (5.75,3.2){$\tau_2$};
\node at (5,3.3){$\sigma$};
\node at (H)[red] {$\bullet$};

\node at (A)[below]{$A$};
\node at (T)[below]{$B$};
\node at (C)[above]{$C$};
\node at (D)[above]{$D$};
\node at (E)[below]{$E'$};
\node at (F)[below]{$F'$};
\node at (G)[above]{$G'$};
\node at (H)[above]{$H'$};
\node at (J)[right]{$J'$};

\draw  [very thick] (G)--(D);
\draw  [very thick] (A) -- (G);
\draw  [very thick] (C)--(T)--(E);
\node at (6.2,2.6)[right]{$\B$};

\end{tikzpicture}   
\caption{A ``opera\c c\~ao II''.}\label{figura03}
\end{figure}

Observamos que a  ``opera\c c\~ao II''  tamb\'em n\~ao muda a soma $n_0 - n_1 + n_2$,
j\'a que ela diminui $n_0$ e $n_2$ de $1$ e diminui $n_1$ de 2, ent\~ao a soma $n_0 - n_1 + n_2$ \'e preservada.

Usando as duas opera\c c\~oes acima, o buraco se 
estende at\'e ficar com um  tri\^angulo s\'o. 
Neste tri\^angulo temos $n_0 - n_1 + n_2 = 3 - 3 + 1 = 1$. 
Relembrando que tiramos um tri\^angulo no in\'icio, que dava j\'a $+1$ na soma 
$n_0 - n_1 + n_2$, portanto, para qualquer poliedro convexo, temos 
$n_0 - n_1 + n_2 = +2$. 
\end{proof}

\subsection{Depois Cauchy} \label{depois_Cauchy}

Alguns autores, em particular Lakatos \cite{Lak},  criticam a prova de Cauchy. 
No seu livro, (veja \cite{Lak}, p\'aginas 11 e 12), 
Lakatos providencia um contra-exemplo ao m\'etodo de Cauchy.
 Aqui vamos adaptar o  contra-exemplo de Lakatos 
no contexto de  nosso 
exemplo anterior (figuras \ref{poliedre-Cauchy} e \ref{Cauchytriangule}).  

Note que, neste contexto, 
Lakatos usa uma ordem de remo\c c\~ao dos  tri\^angulos indicado na figura \ref{ordemLakatos}. 

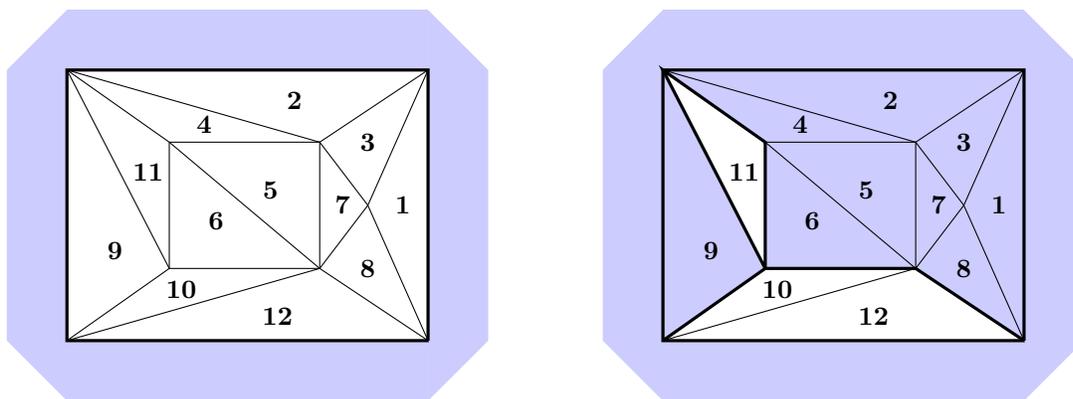
\begin{figure}[H]
\begin{tikzpicture}  [scale= 0.8] 
\coordinate (A) at (1.7,1.2);
\coordinate (T) at (4.2,1.2);
\coordinate  (C) at (1.7,3.3);
\coordinate  (D) at (4.2,3.3);
\coordinate (E) at (0,0);
\coordinate (F) at (6,0);
\coordinate  (G) at (0,4.5);
\coordinate  (H) at (6,4.5);
\coordinate (J) at (5,2.25);

\coordinate (L) at (0,-1);
\coordinate (M) at (-1,0);
\coordinate  (N) at (-1,4.5);
\coordinate  (O) at (0,5.5);
\coordinate (P) at (6,5.5);
\coordinate (Q) at (7,4.5);
\coordinate  (R) at (7,0);
\coordinate  (S) at (6,-1);

\fill[fill=blue!20] (L) -- (O) -- (N) --(M);
\fill[fill=blue!20] (P) -- (Q) -- (R) -- (S);
\fill[fill=blue!20] (O) -- (G) -- (H)-- (P);
\fill[fill=blue!20] (L) -- (E) -- (F)-- (S);

\draw (E) -- (A) -- (T) -- (F) ;
\draw[very thick] (F) -- (E) -- (G) -- (H)--(F) ; 
\draw (H) -- (J) --  (F)--(H) ;
\draw  (T) -- (J);
\draw  (J) -- (D) -- (T);

\draw  (A) -- (C) --(G);
\draw   (C) -- (D) --(H);

\draw (G)--(D);
\draw (A) -- (G);
\draw (C)--(T)--(E);

\node at (5.3,2.25)[right]{$\bf 1$};
\node at (3.5,4)[right]{$\bf 2$};
\node at (5,3.3){$\bf 3$};
\node at (2,3.6)[right]{$\bf 4$};
\node at (3.1,2.5)[right]{$\bf 5$};
\node at (2.2,2)[right]{$\bf 6$};
\node at (4.3,2.25)[right]{$\bf 7$};
\node at (5,1.2){$\bf 8$};
\node at (0.8,1.5){$\bf 9$};
\node at (1.9,0.85) {$\bf 10$};
\node at (1.35,2.8) {$\bf 11$};
\node at (3.5,0.4) {$\bf 12$};

\end{tikzpicture}   
\qquad\qquad
\begin{tikzpicture}  [scale= 0.8] 
\coordinate (A) at (1.7,1.2);
\coordinate (T) at (4.2,1.2);
\coordinate  (C) at (1.7,3.3);
\coordinate  (D) at (4.2,3.3);
\coordinate (E) at (0,0);
\coordinate (F) at (6,0);
\coordinate  (G) at (0,4.5);
\coordinate  (H) at (6,4.5);
\coordinate (J) at (5,2.25);

\coordinate (L) at (0,-1);
\coordinate (M) at (-1,0);
\coordinate  (N) at (-1,4.5);
\coordinate  (O) at (0,5.5);
\coordinate (P) at (6,5.5);
\coordinate (Q) at (7,4.5);
\coordinate  (R) at (7,0);
\coordinate  (S) at (6,-1);

\fill[fill=blue!20] (G) -- (C) -- (D) --(H);
\fill[fill=blue!20] (T) -- (F) -- (H) -- (D);
\fill[fill=blue!20] (A) -- (T) -- (D)-- (C);
\fill[fill=blue!20] (G) -- (E) -- (A);

\fill[fill=blue!20] (L) -- (O) -- (N) --(M);
\fill[fill=blue!20] (P) -- (Q) -- (R) -- (S);
\fill[fill=blue!20] (O) -- (G) -- (H)-- (P);
\fill[fill=blue!20] (L) -- (E) -- (F)-- (S);

 
 \node at (5.3,2.25)[right]{$\bf 1$};
\node at (3.5,4)[right]{$\bf 2$};
\node at (5,3.3){$\bf 3$};
\node at (2,3.6)[right]{$\bf 4$};
\node at (3.1,2.5)[right]{$\bf 5$};
\node at (2.2,2)[right]{$\bf 6$};
\node at (4.3,2.25)[right]{$\bf 7$};
\node at (5,1.2){$\bf 8$};
\node at (0.8,1.5){$\bf 9$};
\node at (1.9,0.85) {$\bf 10$};
\node at (1.35,2.8) {$\bf 11$};
\node at (3.5,0.4) {$\bf 12$};

\draw[very thick] (E)--(A) -- (T) -- (F);
\draw (E)--(T);
\draw[very thick] (F) -- (E) -- (G) -- (H)--(F) ; 
\draw (H) -- (J) --  (F)--(H) ;
\draw  (T) -- (J);
\draw  (J) -- (D) -- (T);
\draw (G)--(D);
\draw [very thick]  (A) -- (C) --(G)--(A);
\draw   (C) -- (D) --(H);
\draw (C)--(T);

\end{tikzpicture}   
\bigskip
\caption{Ordem de remo\c c\~ao seguido Lakatos}\label{ordemLakatos}
\end{figure}

\begin{figure}[H]
\begin{tikzpicture}  [scale= 0.9] 
\coordinate (A) at (1.7,1.2);
\coordinate (T) at (4.2,1.2);
\coordinate  (C) at (1.7,3.3);
\coordinate  (D) at (4.2,3.3);
\coordinate (E) at (0,0);
\coordinate (F) at (6,0);
\coordinate  (G) at (0,4.5);
\coordinate  (H) at (6,4.5);
\coordinate (J) at (5,2.25);

\coordinate (L) at (0,-1);
\coordinate (M) at (-1,0);
\coordinate  (N) at (-1,4.5);
\coordinate  (O) at (0,5.5);
\coordinate (P) at (6,5.5);
\coordinate (Q) at (7,4.5);
\coordinate  (R) at (7,0);
\coordinate  (S) at (6,-1);

\fill[fill=blue!20] (G) -- (C) -- (D) --(H);
\fill[fill=blue!20] (T) -- (F) -- (H) -- (D);
\fill[fill=blue!20] (A) -- (T) -- (D)-- (C);
\fill[fill=blue!20] (G) -- (E) -- (A);

\fill[fill=blue!20] (L) -- (O) -- (N) --(M);
\fill[fill=blue!20] (P) -- (Q) -- (R) -- (S);
\fill[fill=blue!20] (O) -- (G) -- (H)-- (P);
\fill[fill=blue!20] (L) -- (E) -- (F)-- (S);

 \filldraw[fill=pink] (E)--(A)--(T);
 
 \node at (5.3,2.25)[right]{$\bf 1$};
\node at (3.5,4)[right]{$\bf 2$};
\node at (5,3.3){$\bf 3$};
\node at (2,3.6)[right]{$\bf 4$};
\node at (3.1,2.5)[right]{$\bf 5$};
\node at (2.2,2)[right]{$\bf 6$};
\node at (4.3,2.25)[right]{$\bf 7$};
\node at (5,1.2){$\bf 8$};
\node at (0.8,1.5){$\bf 9$};
\node at (1.9,0.85) {$\bf 10$};
\node at (1.35,2.8) {$\bf 11$};
\node at (3.5,0.4) {$\bf 12$};

\draw[very thick,red] (E)--(A) -- (T) ;
\draw[very thick] (E)--(T)--(F);
\draw[very thick] (F) -- (E) -- (G) -- (H)--(F) ; 
\draw (H) -- (J) --  (F)--(H) ;
\draw  (T) -- (J);
\draw  (J) -- (D) -- (T);
\draw (G)--(D);
\draw [very thick]  (A) -- (C) --(G)--(A);
\draw   (C) -- (D) --(H);
\draw (C)--(T);

\end{tikzpicture}   
\caption{Onde a prova de Cauchy n\~ao funciona}\label{Lakatos3}
\end{figure}
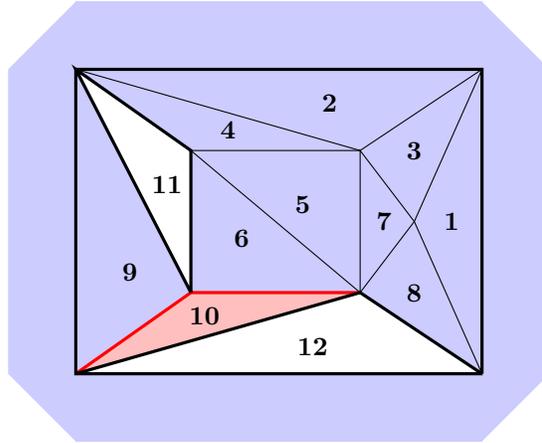

Estendemos o buraco, removendo tri\^angulos na ordem indicada nesta 
 figura, usando as opera\c c\~oes de Cauchy I e II at\'e o tri\^angulo do n\'umero 9. 
 A borda do buraco n\~ao \'e mais uma curva homeomorfa a uma circunf\^erencia.  
 Se {removemos}  o tri\^angulo do n\'umero 10, observamos que tiramos duas arestas e um tri\^angulo, 
ent\~ao a soma $n_0 - n_1 + n_2$ n\~ao \'e preservada. Al\'em disso, neste caso, 
 o buraco desconecta o resto da figura.  Os 
tri\^angulos dos n\'umeros 11 e 12  n\~ao est\~ao mais conectados. 

Assim, {\'e preciso de} 
cuidar da ordem de retirada dos tri\^angulos porque pode acontecer situa\c c\~oes 
{tais} que o buraco desconecta o poliedro $K$. Al\'em disso, 
a borda do buraco n\~ao \'e mais uma curva homeomorfa a uma {circunfer\^encia}, 
ela tem um ponto m\'ultiplo.

No artigo \cite{Li2}, Lima formaliza os argumentos de Lakatos e explicita 
situa\c c\~oes das figuras  \ref{figuraCE}. Nas figuras (a), (b) e (c),  a extens\~ao do buraco,  
retirando do poliedro $K$ o tri\^angulo $\sigma$, {por um lado} muda a soma $n_0 - n_1 + n_2$
e {por outro lado} desconecta o poliedro $K$. O exemplo de Lakatos corresponde 
a situa\c c\~ao (a) na figura de Elon Lima. 
Nos exemplos de Elon Lima, a borda do buraco $\B$ \'e uma curva com pontos m\'ultiplos. 

Observamos que na figura (d), que tamb\'em \'e  um caso tendo ponto m\'ultiplo, por\'em
 a soma $n_0 - n_1 + n_2$ est\'a preservada quando retiramos de $K$ 
o tri\^angulo $\sigma$, porque tiramos dois v\'ertices $x_2$ e $x_3$, 
tr\^es arestas $(x_1,x_2)$, $(x_2,x_3)$, $(x_1,x_3)$ e o tri\^angulo $\sigma$.
Parece que  Elon Lima n\~ao 
percebeu que este caso tamb\'em \'e admiss\'ivel. 
Ent\~ao a situa\c c\~ao da figura (d) pode ser usado no processo de Cauchy. 
Com efeito, realizaremos uma  opera\c c\~ao  adicional 
que definimos como  ``opera\c c\~ao III''  
e que pode ser usada de maneira alternativa 
na Etapa 5 da prova do teorema \ref{teo1}
(veja observa\c c\~ao \ref{rem111}).
Esta opera\c c\~ao \'e usada tamb\'em em \cite{Cel}. 

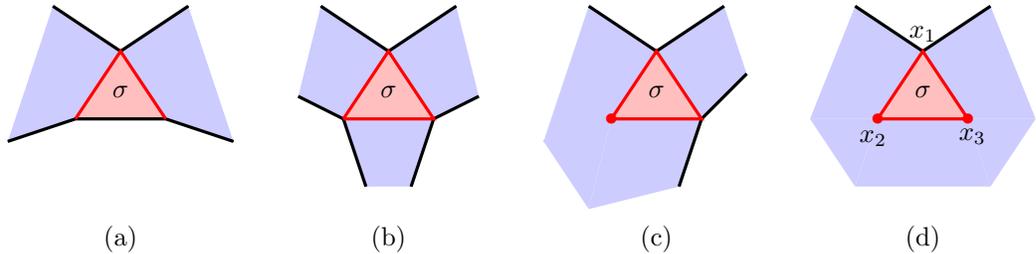
\begin{figure} [H] 
\begin{tikzpicture}  [scale= 0.30]

\coordinate (A) at (-2,0);
\coordinate (B) at (2,0);
\coordinate  (C) at (0,3);
\coordinate  (D) at (-3,5);
\coordinate (E) at (3,5);
\coordinate (F) at (-5, -1);
\coordinate  (G) at (5,-1);
\coordinate  (H) at (-4,1);
\coordinate  (I) at (4,1);
\coordinate (J) at (-1,-3);
\coordinate (K) at (1,-3);
\coordinate (M) at (-3, -4);
\coordinate  (N) at (1,-3);
\coordinate  (0) at (4,2);
\coordinate  (P) at (-5,0);
\coordinate (Q) at (-3,-3);
\coordinate (R) at (3,-3);
\coordinate (S) at (5,0);

  \draw (E)   --  (C) -- (B) -- (G) ; 

   \filldraw[fill=blue!20] (D)   --  (C) -- (A) -- (F) ; 
     \filldraw[fill=blue!20] (E)   --  (C) -- (B) -- (G) ; 
     
     \filldraw[fill=pink] (A)   --  (C) -- (B) ; 
     
      \draw [very thick](D)   --  (C) -- (E) ; 
        \draw[very thick] (F)   --  (A) -- (B) -- (G) ; 
       \draw[very thick,red]  (A)   --  (C)-- (B) ; 
         
           \node (c) at (0,1.2) {$\sigma$}; 
              \node (a) at (0,-6.3) [above] {$\rm (a)$}; 
   \end{tikzpicture}
\qquad 
    \begin{tikzpicture}  [scale= 0.30] 

\coordinate (A) at (-2,0);
\coordinate (B) at (2,0);
\coordinate  (C) at (0,3);
\coordinate  (D) at (-3,5);
\coordinate (E) at (3,5);
\coordinate (F) at (-5, -1);
\coordinate  (G) at (5,-1);
\coordinate  (H) at (-4,1);
\coordinate  (I) at (4,1);
\coordinate (J) at (-1,-3);
\coordinate (K) at (1,-3);
\coordinate (M) at (-3, -4);
\coordinate  (N) at (1,-3);
\coordinate  (0) at (4,2);
\coordinate  (P) at (-5,0);
\coordinate (Q) at (-3,-3);
\coordinate (R) at (3,-3);
\coordinate (S) at (5,0);

   \filldraw[fill=blue!20] (D)   --  (C) -- (A) -- (H) ; 
     \filldraw[fill=blue!20] (E)   --  (C) -- (B) -- (I) ; 
      \filldraw[fill=blue!20] (J) -- (A)   --  (B)-- (K) ; 

     \filldraw[fill=pink] (A)   --  (C) -- (B) ; 
      
         \draw [very thick](D)   --  (C) -- (E) ; 
        \draw[very thick] (H)   --  (A) -- (J) ; 
           \draw[very thick] (K)   --  (B) -- (I); 
       \draw[very thick,red]  (A)   --  (C)-- (B)--(A) ; 
   
     \node (c) at (0,1.2) {$\sigma$}; 
 \node (a) at (0,-6.3) [above] {$\rm (b)$}; 
   \end{tikzpicture}
\qquad 
\begin{tikzpicture}  [scale= 0.30] 

\coordinate (A) at (-2,0);
\coordinate (B) at (2,0);
\coordinate  (C) at (0,3);
\coordinate  (D) at (-3,5);
\coordinate (E) at (3,5);
\coordinate (F) at (-5, -1);
\coordinate  (G) at (5,-1);
\coordinate  (H) at (-4,1);
\coordinate  (I) at (4,1);
\coordinate (J) at (-1,-3);
\coordinate (K) at (1,-3);
\coordinate (M) at (-3, -4);
\coordinate  (N) at (1,-3);
\coordinate  (O) at (4,2);
\coordinate  (P) at (-5,0);
\coordinate (Q) at (-3,-3);
\coordinate (R) at (3,-3);
\coordinate (S) at (5,0);

   \fill[fill=blue!20] (D)   --  (C) -- (A) -- (F) ; 
     \fill[fill=blue!20] (F)   --  (A) -- (M)  ; 
        \fill[fill=blue!20] (M)   -- (A) --(B) -- (N) ; 
   \filldraw[fill=blue!20] (E)   --  (C) -- (B) -- (O) ; 
   \draw[very thick,blue!20] (F)--(A)--(H);
   
     \filldraw[fill=pink] (A)   --  (C) -- (B) ; 
              
                    \draw [very thick](D)   --  (C) -- (E) ; 
           \draw[very thick] (K)   --  (B) -- (O); 
       \draw[very thick,red]  (A)   --  (C)-- (B)--(A) ; 
            
            \node (u) at (-2,0) {\textcolor{red}{$\bullet$}}; 
              \node (c) at (0,1.2) {$\sigma$}; 
             \node (a) at (0,-6.3) [above] {$\rm (c)$}; 
   \end{tikzpicture}
\qquad 
\begin{tikzpicture}  [scale= 0.30]

\coordinate (A) at (-2,0);
\coordinate (B) at (2,0);
\coordinate  (C) at (0,3);
\coordinate  (D) at (-3,5);
\coordinate (E) at (3,5);
\coordinate (F) at (-5, -1);
\coordinate  (G) at (5,-1);
\coordinate  (H) at (-4,1);
\coordinate  (I) at (4,1);
\coordinate (J) at (-1,-3);
\coordinate (K) at (1,-3);
\coordinate (M) at (-3, -4);
\coordinate  (N) at (1,-3);
\coordinate  (0) at (4,2);
\coordinate  (P) at (-5,0);
\coordinate (Q) at (-3,-3);
\coordinate (R) at (3,-3);
\coordinate (S) at (5,0);

   \fill[fill=blue!20] (D)   --  (C) -- (A) -- (P) ; 
     \fill[fill=blue!20] (P)   --  (A) -- (Q)  ; 
        \fill[fill=blue!20] (Q)   -- (A) --(B) -- (R) ; 
        \fill[fill=blue!20] (R)   --  (B) -- (S)  ;    
   \fill[fill=blue!20] (S)   --  (B) -- (C) -- (E) ; 
   
     \filldraw[fill=pink] (A)   --  (C) -- (B) ; 
        
             \draw [very thick](D)   --  (C) -- (E) ; 
       \draw[very thick,red]  (A)   --  (C)-- (B)--(A) ; 
               \node (v) at (2,0) {\textcolor{red}{$\bullet$}}; 
            \node (u) at (-2,0) {\textcolor{red}{$\bullet$}}; 
            
      \node (a) at (0,3) [above] {$x_1$}; 
      \node (b) at (-2.2,-0.1) [below]{$x_2$}; 
      \node (c) at (2.2,0)[below] {$x_3$}; 
         \node (c) at (0,1.2) {$\sigma$}; 
          \node (a) at (0,-6.3) [above] {{$\rm (d)$}}; 
   \end{tikzpicture}
\caption{Os contra-exemplos de E. Lima.  
O buraco $\B$ est\'a em azul, o  tri\^angulo  $\sigma$ 
a ser removido est\'a em cor de rosa e os demais simplexos a ser 
removidos est\~ao em vermelho.}\label{figuraCE}
\end{figure}

A conclus\~ao de Lima (\cite{Li2}, p\'agina 73) \'e que 
{\it a justifica\c c\~ao do m\'etodo de Cauchy (se\c c\~ao \ref{methode_Cauchy}) 
depende de um ``resultado 
profundo da Topologia,
equivalente \`a demonstra\c c\~ao da invari\^ancia dos grupos de homologia da esfera  $\Sp^2.$''} 
Neste artigo, na prova que providenciaremos na se\c c\~ao \ref{OTeo}, 
mostramos que o  m\'etodo de Cauchy (se\c c\~ao \ref{methode_Cauchy}) vale 
 sem usar  resultados mais profundos e somente usando sub-triangula\c c\~oes adequadas.

Note que usando a opera\c c\~ao III, o contra-exemplo de Lakatos n\~ao \'e mais 
um contra-exemplo: voltando na figura \ref{ordemLakatos}, depois de remover o tri\^angulo 
do num\'ero 9, podemos remover o tri\^angulo do num\'ero 12 pela opera\c c\~ao II e depois 
remover o tri\^angulo do num\'ero 10 pela opera\c c\~ao III. Fica somente o tri\^angulo do num\'ero 11 para qual temos $n_0 - n_1 + n_2 = +1$, o que d\'a o resultado!

Alguns autores sugerem uma estrat\'egia para definir uma ordem de remo\c c\~ao dos tri\^angulos 
que permite de usar o m\'etodo de Cauchy para chegar ao resultado. 
Por exemplo, Kirk \cite{Kirk} sugere a seguinte estrat\'egia: 
``{\it There are two important rules to follow when doing this. 
Firstly, we must always perform {\rm [Opera\c c\~ao II]}  when it is possible to do so; if there is a choice 
between {\rm [Opera\c c\~ao I]} and {\rm [Opera\c c\~ao II]} we must always choose {\rm [Opera\c c\~ao II]}. 
If we do not, the network may break up into separate pieces. 
Secondly, we must only remove faces one at a time}.'' 
Providenciamos, na figura \ref{contra-exemplo-Abigail}, um exemplo cujo processo segue  as regras da estrat\'egia definida por  Kirk mas
o processo chega a desconectar o poliedro. Ent\~ao estas regras n\~ao est\~ao 
suficientes. E f\'acil construir um exemplo mostrando tamb\'em que elas n\~ao est\~ao necess\'arias.

 \begin{figure} [H]
\begin{tikzpicture}  [scale= 0.65]

\coordinate (A) at (0,0);
\coordinate (B) at (4,0);
\coordinate  (C) at (8,0);
\coordinate  (D) at (2,2);
\coordinate (E) at (6,2);
\coordinate (F) at (0,4);
\coordinate  (G) at (2,4);
\coordinate  (H) at (4,4);
\coordinate (J) at (6,4);
\coordinate (K) at (8,4);
\coordinate  (L) at (1,5);
\coordinate (M) at (4,5);
\coordinate (N) at (0,6);
\coordinate  (O) at (7,6);
\coordinate  (P) at (2,6);
\coordinate  (Q) at (6,6);
\coordinate (R) at (8,6);

\coordinate (S) at (-1,6);
\coordinate  (T) at (0,7);
\coordinate (U) at (8,7);
\coordinate (V) at (9,6);
\coordinate  (W) at (-1,0);
\coordinate  (X) at (0,-1);
\coordinate  (Y) at (8,-1);
\coordinate (Z) at (9,0);

\fill[fill=blue!20] (S) -- (T) -- (U) --(V);
\fill[fill=blue!20] (W) -- (X) -- (Y)--(Z);
\fill[fill=blue!20] (S) -- (N) -- (A)--(W);
\fill[fill=blue!20] (C) -- (R) -- (V)--(Z);

\draw (A)--(C)--(R)--(N)--(A);
\draw   (A)--(D)--(B)--(E)--(C);
\draw   (F)--(D)--(E)--(K)--(J)--(E);
\draw (E)--(H) -- (D) -- (G) -- (F);
\draw (F) -- (L) -- (G) --(H) -- (J) -- (R);
\draw (J) -- (Q) -- (H)--(M) --(Q) ;
\draw (H) -- (P) --  (M)  ;
\draw (G) -- (P) --  (L)--(N)  ;

\node at (7.5,-0.5) {$(a)$};
\end{tikzpicture}   
\qquad
\begin{tikzpicture}  [scale= 0.65] 

\coordinate (A) at (0,0);
\coordinate (B) at (4,0);
\coordinate  (C) at (8,0);
\coordinate  (D) at (2,2);
\coordinate (E) at (6,2);
\coordinate (F) at (0,4);
\coordinate  (G) at (2,4);
\coordinate  (H) at (4,4);
\coordinate (J) at (6,4);
\coordinate (K) at (8,4);
\coordinate  (L) at (1,5);
\coordinate (M) at (4,5);
\coordinate (N) at (0,6);
\coordinate  (O) at (7,6);
\coordinate  (P) at (2,6);
\coordinate  (Q) at (6,6);
\coordinate (R) at (8,6);

\coordinate (S) at (-1,6);
\coordinate  (T) at (0,7);
\coordinate (U) at (8,7);
\coordinate (V) at (9,6);
\coordinate  (W) at (-1,0);
\coordinate  (X) at (0,-1);
\coordinate  (Y) at (8,-1);
\coordinate (Z) at (9,0);

\fill[fill=blue!20] (S) -- (T) -- (U) --(V);
\fill[fill=blue!20] (W) -- (X) -- (Y)--(Z);
\fill[fill=blue!20] (S) -- (N) -- (A)--(W);
\fill[fill=blue!20] (C) -- (R) -- (V)--(Z);

\fill[fill=blue!20] (K) -- (R) -- (Q) --(J);
\fill[fill=blue!20] (A) -- (D) -- (H) -- (P) --(G) -- (F) --(A);
\fill[fill=blue!20] (Q) --(J)--(H)--(M)--(Q);
\fill[fill=blue!20] (Q) --(J)--(H)--(M)--(Q);
\fill[fill=blue!20] (M)--(P) -- (Q);
\fill[fill=pink] (M)--(P) -- (H);

\draw (A)--(C)--(R)--(N)--(A);
\draw   (A)--(D)--(B)--(E)--(C);
\draw   (F)--(D)--(E)--(K)--(J)--(E);
\draw (E)--(H) -- (D) -- (G) -- (F);
\draw (F) -- (L) -- (G) --(H) -- (J) -- (R);
\draw (J) -- (Q) -- (H)--(M) --(Q) ;
\draw (H) -- (P) --  (M)  ;
\draw (G) -- (P) --  (L)--(N)  ;

\draw[very thick,red] (H)--(M) -- (P)--(H);
\node at (M) {$\bullet$};

\node at (7.5,4.5){$5_I$};
\node at (7,5.5){$6_{II}$};
\node at (5.5,4.5){$7_I$};
\node at (1,2){$1_{I}$};
\node at (1.4,3.5){$2_I$};
\node at (2.8,3.5){$3_{I}$};
\node at (2.8,4.5){$4_I$};
\node at (4.6,5){$8_I$};
\node at (4,5.6){$9_{II}$};
\node at (3.7,4.7){$10$};

\node at (P) [above]{$A$};
\node at (H) [below]{$B$};
\node at (7.5,-0.5) {$(b)$};
\end{tikzpicture}    

 \medskip 
\caption{Um contra-exemplo \`a ``regra'' dada por  Kirk.} \label{contra-exemplo-Abigail}
\end{figure}
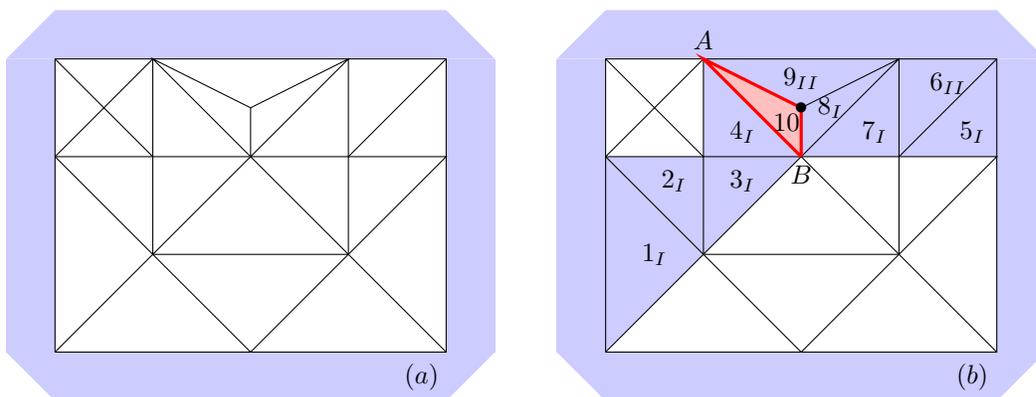

Na figura \ref{contra-exemplo-Abigail}, os tri\^angulos est\~ao removidos 
na ordem dos n\'umeros. O \'indice I ou II em baixo de cada n\'umero 
significa que este tri\^angulo est\'a removido com a opera\c c\~ao  I ou II correspondente.
Aqui, chegando ao n\'umero 8, a borda do buraco n\~ao \'e mais homeomorfa a uma circunfer\^encia. 

O exemplo da figura \ref{contra-exemplo-Abigail} 
\'e tamb\'em um contra-exemplo mostrando que, mesmo com a  opera\c c\~ao III, 
o processo de Cauchy n\~ao vale com a ordem de remo\c c\~ao escolhida.  
De fato, depois de remover o tri\^angulo do n\'umero  7, podemos continuar at\'e 
o tri\^angulo do  n\'umero  10.
Mas n\~ao podemos remover  este tri\^angulo porque, neste caso, removemos um v\'ertice, 
tr\^es arestas e um tri\^angulo. A soma alterada correspondente 
\'e $n_0 - n_1 + n_2 = 1 -3+1 = -1$. 
Note que os  v\'ertices $A$ e $B$ n\~ao pertencem ao buraco. 

A nosso conhecimento, nenhum autor deu uma estrat\'egia adaptada de {remo\c c\~ao} dos tri\^angulos 
que permite usar somente o m\'etodo de Cauchy e ferramentas conhecidas na \'epoca 
de Cauchy.

Depois de Cauchy,  v\'arios autores propuseram outras provas da f\'ormula (\ref{Euler}), 
usando argumentos originais.
Veja o site de David Eppstein \cite{Epp} que cont\^em 20 provas diferentes, usando 
ferramentas que aparecem somente depois da \'epoca de Cauchy. 
Em particular, algumas provas usam o Lema de Jordan  (Jordan \cite{Jo}, 1866).  
De fato, veremos que as curvas de Jordan aparecem  na prova de nosso Teorema \ref{teo1}
como um  artefato.

 A conclus\~ao \'e que a  resposta para a segunda pergunta tamb\'em n\~ao \'e clara. 
O que \'e claro  \'e que todos os quatro matem\'aticos Descartes, Euler, Legendre e Cauchy 
 jogaram um papel importante na descoberta e/ou  na prova da f\'ormula. 
 Embora disso, nenhum deles percebeu a import\^ancia da f\'ormula. 
 Depois da contribu\c c\~ao que vimos,  todos os quatro deixaram
este assunto e n\~ao voltaram a trabalhar nele.

\smallskip

\subsection{Generaliza\c c\~oes da f\'ormula (\ref{Euler}).}\label{genera}

A f\'ormula (\ref{Euler}) foi estendida primeiramente por Lhuilier, para superf\'\i cies orient\'aveis 
de g\^enero $g$, como sendo:
$$ n_0 -n_1 + n_2 = 2-2g.$$

No caso de superf\'\i cies n\~ao orient\'aveis,  a f\'ormula \'e dada (veja \cite{Mas}) por 
$$ n_0 -n_1 + n_2 = 2-g.$$

O resultado geral foi providenciado por Poincar\'e \cite{Po3,Po1} que mostra que, para qualquer 
triangula\c c\~ao de um poliedro $X$ de dimens\~ao $k$, chamando 
de $n_i$ o n\'umero de simplexos de dimens\~ao $i$, a soma 
\begin{equation}\label{Euler-Poincare}
\chi (X) = \sum_{i=0}^k (-1)^i n_i
\tag{3}
\end{equation}
n\~ao depende da triangula\c c\~ao de $X$. Esta soma \'e chamada de 
caracter\'istica de Euler-Poincar\'e de $X$.

Assim, a caracter\'istica de Euler-Poincar\'e do toro $\T$ vale $0$ (o toro \'e de 
g\^enero $2$). A caracter\'istica de Euler-Poincar\'e do plano projetivo ${\mathbb{P}}^2$ vale $+1$, 
da garrafa de Klein vale $0$. Observe-se que a caracter\'istica de Euler-Poincar\'e do toro pin\c cado 
(veja \S \ref{pinc\'e}), que \'e uma superf\'\i cie singular, vale $+1$.

\section{O m\'etodo de Cauchy na demonstra\c c\~ao da f\'ormula de Euler} \label{OTeo} 

As representa\c c\~oes planares das superf\'icies que vimos: a esfera (figura \ref{triangsphere2}), 
o toro (figura \ref{o toro}), o plano projetivo (figura \ref{Projplanar}) e 
a garrafa de Klein (figura \ref{Klein}) s\~ao exemplos da seguinte situa\c c\~ao: Um pol\'igono 
triangulado  $K$ homeomorfo a um disco $D$, tal que temos, eventualmente, identifica\c c\~oes de simplexos 
 na borda de $K$ (veja Defini\c c\~ao \ref{repplanar}). 
Nesta se\c c\~ao, mostramos o seguinte teorema utilizando s\'o  o m\'etodo  de Cauchy 
(se\c c\~ao \ref{methode_Cauchy}) e sub-triangula\c c\~oes:
\begin{theorem} \label{teo1}
Seja $K$  um pol\'igono  triangulado em $\R^2$, homeomorfo 
a um disco $D$, com poss\'iveis identifica\c c\~oes dos simplexos na borda de $K$, 
o que denotamos por $K_0$.  Temos
$$\chi(K) = \chi (K_0) +1.$$
\end{theorem}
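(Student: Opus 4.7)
O plano \'e aplicar fielmente o m\'etodo de Cauchy descrito em \S\ref{methode_Cauchy}, complementado por sub-triangula\c c\~oes. Come\c caremos escolhendo um 2-simplexo $\sigma_0$ no interior de $K$ e o removeremos; isto cria um buraco $\B$ cujo bordo \'e o pr\'oprio $\partial\sigma_0$ (uma circunfer\^encia triangulada) e diminui $\chi$ em exatamente $1$, de modo que a figura restante $K\setminus\sigma_0$ tem caracter\'istica $\chi(K)-1$. A partir da\'\i, iteraremos o processo, removendo um tri\^angulo adjacente a $\B$ em cada passo (juntamente com os simplexos que ficam expostos), mantendo $\chi$ invariante.

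Em cada etapa, conforme o n\'umero de arestas que o tri\^angulo a ser removido compartilha com $\partial\B$, usaremos uma das tr\^es opera\c c\~oes de Cauchy: a opera\c c\~ao I remove $1$ aresta e $1$ tri\^angulo; a II remove $1$ v\'ertice, $2$ arestas e $1$ tri\^angulo; a III remove $2$ v\'ertices, $3$ arestas e $1$ tri\^angulo. Um c\'alculo direto confirma que cada uma preserva a soma alternada $n_0-n_1+n_2$. A obstru\c c\~ao principal --- j\'a exibida nos contra-exemplos de Lakatos e Lima --- \'e que uma escolha desavisada do pr\'oximo tri\^angulo pode tornar $\partial\B$ uma curva n\~ao simples ou desconectar $K\setminus\B$, impedindo o prosseguimento controlado da remo\c c\~ao.

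Para contornar essa dificuldade sem recorrer a ferramentas topol\'ogicas posteriores a Cauchy, apelaremos a \emph{sub-triangula\c c\~oes} locais. Sempre que necess\'ario, inseriremos um novo v\'ertice no interior de um tri\^angulo (ligando-o aos tr\^es v\'ertices, dando $\Delta\chi = 1-3+2=0$) ou sobre uma aresta compartilhada por $k$ tri\^angulos (ligando-o aos v\'ertices opostos, dando $\Delta\chi=1-(1+k)+k=0$). Assim $\chi$ permanece inalterada enquanto refinamos $K$ de modo a viabilizar a pr\'oxima opera\c c\~ao de Cauchy sem conflitos topol\'ogicos; a ``constru\c c\~ao de uma pir\^amide adequada'' referida na introdu\c c\~ao servir\'a como modelo geom\'etrico concreto para estas subdivis\~oes.

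O ponto mais delicado do argumento ser\'a justamente demonstrar que este procedimento sempre pode continuar at\'e o fim: a partir de qualquer configura\c c\~ao intermedi\'aria do par $(K\setminus\B,\B)$, existe uma sequ\^encia finita de sub-triangula\c c\~oes seguida de uma opera\c c\~ao I, II ou III que estende o buraco mantendo seu bordo homeomorfo a uma circunfer\^encia. Uma vez estabelecido esse lema de avan\c co, a indu\c c\~ao sobre o n\'umero decrescente de tri\^angulos encerra o argumento: ao t\'ermino do processo, o \'unico subcomplexo remanescente \'e $K_0$, e como $\chi$ foi diminu\'ida em $1$ apenas no passo inicial e preservada nas demais etapas, conclu\'imos $\chi(K_0)=\chi(K)-1$, o que \'e equivalente \`a f\'ormula enunciada.
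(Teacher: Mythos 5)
Há uma lacuna genuína no seu esboço: todo o peso do argumento é transferido para o ``lema de avanço'' --- a afirmação de que, de qualquer configuração intermediária, existe uma sequência finita de sub-triangulações seguida de uma operação I, II ou III que estende o buraco mantendo a borda homeomorfa a uma circunferência --- mas esse lema é apenas enunciado, nunca demonstrado. Ora, esse é exatamente o ponto em que a prova original de Cauchy falha, como mostram os contra-exemplos de Lakatos e de Lima citados no texto: sem uma \emph{ordem explícita} de remoção dos triângulos, nada garante que a borda do buraco não adquira pontos múltiplos nem que o complemento não se desconecte, e não está claro como inserções locais de vértices (baricentros de triângulos ou pontos sobre arestas) poderiam, por si sós, reparar uma configuração já desconectada ou com borda não simples. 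Dizer que a ``pirâmide adequada'' servirá de modelo para as subdivisões não supre isso: a construção da pirâmide não é um refinamento local auxiliar, é o mecanismo global que produz a ordem de remoção.

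Para comparação, a prova do artigo resolve precisamente essa dificuldade de forma construtiva: eleva-se $K$ a uma pirâmide $\Pi$ com a borda $K_0$ como base e os vértices ordenados pela distância à origem colocados em planos horizontais $P_i$ distintos; o lema \ref{subtriang} fornece uma sub-triangulação $L''$ cujos vértices estão todos nos planos $P_i$ sem alterar $n_0-n_1+n_2$; mostra-se (usando apenas que cada aresta é face de exatamente dois triângulos e que a projeção sobre $K$ é bijetiva) que cada seção $B_i = L''\cap P_i$ é homeomorfa a uma circunferência; e então remove-se faixa por faixa, entre $B_i$ e $B_{i+1}$, apenas com as operações I e II, numa ordem que fica completamente determinada e verificada caso a caso (situações (a) e (b) da figura \ref{situa}). É essa construção que substitui o seu ``lema de avanço'' e que constitui o conteúdo real do teorema; sem ela, seu esboço apenas reformula o problema em vez de resolvê-lo. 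A contabilidade final que você faz (perda de $1$ em $\chi$ na remoção de $\sigma_0$, invariância nas demais etapas, $n_2^{K_0}=0$) está correta e coincide com a do artigo, mas só se sustenta depois de garantida a existência da ordem de remoção.
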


Ressaltamos que o ponto importante de nossa demonstra\c c\~ao \'e, a partir  da triangula\c c\~ao 
dada, providenciamos uma sub-triangula\c c\~ao tal que possamos 
 provar o teorema usando apenas o m\'etodo 
de Cauchy (se\c c\~ao \ref{methode_Cauchy}),  sem outras ferramentas. 
Uma tal sub-triangula\c c\~ao pode ser realizada usando o seguinte lema.

\begin{lem}[Sub-triangula\c c\~ao]\label{subtriang}
Seja $\sigma$ um simplexo de dimens\~ao 2 (tri\^angulo) em uma triangula\c c\~ao $K$ 
de um pol\'igono 
e $\tau = (x,y)$ um segmento que divide $\sigma$, onde $x$ e $y$ pertencem  a 
dois lados diferentes do tri\^angulo $\sigma$.    Ent\~ao, existe uma 
sub-triangula\c c\~ao $K'$ de $K$ tal que $\tau$ \'e um simplexo de dimens\~ao $1$ de $K'$ 
e que o n\'umero $n_0 -n_1 + n_2$ \'e o mesmo para $K$ e $K'$. 
\end{lem}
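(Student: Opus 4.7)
A estrat\'egia \'e produzir $K'$ de maneira expl\'\i cita a partir de uma an\'alise de casos segundo as posi\c c\~oes dos pontos $x$ e $y$ e, em seguida, verificar por balan\c co local que os simplexos acrescentados e removidos preservam a soma $n_0 - n_1 + n_2$. O caso trivial \'e aquele em que $x$ e $y$ j\'a s\~ao v\'ertices de $\sigma$: ent\~ao $\tau$ \'e um lado de $\sigma$ e basta tomar $K' = K$. Caso contr\'ario, pelo menos um novo v\'ertice ser\'a introduzido, e a quest\~ao se reduz a subdividir coerentemente $\sigma$ e os tri\^angulos de $K$ adjacentes aos lados de $\sigma$ que cont\^em $x$ ou $y$, para que a condi\c c\~ao (2) da Defini\c c\~ao \ref{compsimp} seja preservada.

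\textbf{Constru\c c\~ao.} Tratemos o caso gen\'erico no qual $x$ est\'a no interior do lado $e_x = (a_0, a_1)$ e $y$ no interior do lado $e_y = (a_0, a_2)$; os casos restantes s\~ao an\'alogos e mais simples. Primeiro acrescentamos $x$ e $y$ como v\'ertices. A introdu\c c\~ao de $x$ quebra $e_x$ em $(a_0, x)$ e $(x, a_1)$; para que $K'$ seja ainda um complexo simplicial, subdividimos tamb\'em o tri\^angulo $\sigma_x$ eventualmente adjacente a $\sigma$ ao longo de $e_x$, acrescentando a aresta $(x, b_x)$ onde $b_x$ \'e o v\'ertice de $\sigma_x$ oposto a $e_x$. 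Procedemos analogamente com $y$. Finalmente, inserimos $\tau = (x,y)$, que divide $\sigma$ num tri\^angulo $(a_0, x, y)$ e num quadril\'atero de v\'ertices $x, a_1, a_2, y$; triangulamos este quadril\'atero acrescentando a diagonal $(x, a_2)$. O complexo $K'$ assim obtido \'e uma subdivis\~ao de $K$ que cont\'em $\tau$ como $1$-simplexo.

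\textbf{Verifica\c c\~ao combinat\'oria.} Basta comparar os contadores: $\Delta n_0 = 2$ (os v\'ertices $x$ e $y$); $\Delta n_1 = 6$, a saber, $+1$ por cada um dos lados $e_x$ e $e_y$ quebrados, mais a pr\'opria aresta $\tau$, mais a diagonal $(x, a_2)$ e as arestas $(x, b_x)$ e $(y, b_y)$ nos tri\^angulos vizinhos; $\Delta n_2 = 4$, a saber, dois tri\^angulos a mais em $\sigma$ e um a mais em cada um dos vizinhos $\sigma_x$ e $\sigma_y$. Portanto $\Delta(n_0 - n_1 + n_2) = 2 - 6 + 4 = 0$, como se queria. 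Quando $e_x$ (ou $e_y$) \'e um lado da borda de $K$ e n\~ao possui tri\^angulo adjacente em $K$, desaparecem simetricamente uma aresta e uma face do balan\c co acima, e o mesmo continua nulo; os casos em que $x$ ou $y$ coincide com um v\'ertice de $\sigma$ s\~ao verificados de maneira ainda mais elementar.

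\textbf{Principal obst\'aculo.} A parte delicada n\~ao \'e a conta em si, mas sim a observa\c c\~ao de que n\~ao se pode refinar $\sigma$ isoladamente: introduzir $x$ no interior de $e_x$ obriga a alterar o tri\^angulo $\sigma_x$ do outro lado para manter uma triangula\c c\~ao leg\'\i tima, e \'e justamente a simetria entre os $+1$ em $\Delta n_1$ e $\Delta n_2$ provenientes desses refinamentos vizinhos que faz o balan\c co final se anular. Ser\'a este o ponto a verificar com cuidado em cada configura\c c\~ao local.
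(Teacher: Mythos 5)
Your proposal is correct and follows essentially the same route as the paper's proof: subdivide $\sigma$ into three triangles (the small triangle cut off by $\tau$ plus a quadrilateral split by a diagonal), subdivide the two neighbouring triangles that share the sides containing $x$ and $y$, and check that $n_0$, $n_1$, $n_2$ increase by $2$, $6$, $4$ respectively, so the alternating sum is unchanged. Your extra remarks on the boundary case (a side of $\sigma$ without an adjacent triangle) and on the degenerate case where an endpoint of $\tau$ is a vertex are consistent with the paper, which treats the latter as the ``obvious'' second situation.
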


\begin{proof}
Temos duas situa\c c\~oes poss\'iveis, descritas nas figuras 
\ref{subdividi} (i) e  (ii), seguido que um v\'ertice do segmento 
$\tau$ \'e um v\'ertice do tri\^angulo $\sigma$ ou n\~ao. 
Descrevemos a primeira  situa\c c\~ao (figura \ref{subdividi} (i)). 
\begin{figure}[H] 
\begin{tikzpicture} [scale=0.45]

\draw (0,0) -- (5,0) -- (2,4) -- (0,0);
\node at (0,0)[left] {$c$};
\node at (5,0)[right] {$b$};
\node at (2,4)[above] {$a$};

\draw (0,0) -- (-2, 3) -- (2,4);
\node at (-2,3)[left] {$d$};

\draw (0,0) -- (2, -2) -- (5,0);
\node at (2,-2)[left] {$e$};

\node at (1.25, 2.5)[right] {$x$};
\node at (3.1,-0.1)[above] {$y$};
\node at (2.25, 1.25) {$\tau$};
\draw[thick, red] (1.2, 2.4) -- (3,0); 

\draw [thick, blue] (1.2, 2.4) -- (-2,3);
\draw [thick, blue] (1.2, 2.4) -- (5,0);
\draw [thick, blue] (3,0) -- (2,-2);
  \fill [black,opacity=.5] (1.2, 2.4) circle (2pt);
    \fill [black,opacity=.5] (3,0) circle (2pt);
    
    \node at (4.5,-2.5)[left] {$\rm (i)$};
\end{tikzpicture}
\qquad\qquad
 \begin{tikzpicture}  [scale= 0.6]    

\coordinate [label=right:$a$]  (a) at (1,2.5);
\coordinate [label=right:$b$]  (b) at (3,0);
\coordinate [label=left:$c$]  (c) at (-1,0);
\coordinate [label=left:$d$]  (d) at (-1.8,1.9);
\coordinate [label=left:$e$]  (e) at (-0.5,-2);
\coordinate [label=above:$\sigma$]  (s) at (1.8,0.5);
\coordinate [label=above:$\tau$]  (t) at (0.7,0.6);

\coordinate   (l2) at (1,0);

\draw (a)--(b) -- (e) -- (c) -- (b);
\draw (a)--(c) -- (d) -- (a);
\draw [thick,red] (a)--(l2);
\draw [thick,blue] (l2) -- (e);

    \fill [black,opacity=.5] (l2) circle (2pt);
     \node at (2.5,-2.5)[left] {$\rm (ii)$};
\end{tikzpicture}
\caption{Sub-triangula\c c\~ao $K'$}\label{subdividi}
\end{figure}
Denotamos ${a,b,c}$ os v\'ertices de $\sigma$. Suponhamos que $\tau$ 
seja o segmento $(x,y)$ com $x\in (a,c)$ e $y\in (b,c)$.
Definamos a triangula\c c\~ao $K'$ como seguinte: 
O tri\^angulo $(x,y,c)$ \'e um simplexo de dimens\~ao 2 de $K'$.
O quadril\'atero $(a,b,y,x)$ pode ser dividido 
em dois tri\^angulos, por exemplo $(a,b,x)$ e $(b,x,y)$, 
 que fornecem 
dois simplexos de dimens\~ao 2 de $K'$. 
Agora, $(a,c)$ \'e uma face  de um simplexo 
de dimens\~ao 2 de $K$, seja $(a,c,d)$. 
Dividimos este simplexo em dois simplexos de 
dimens\~ao 2,  que s\~ao $(a,d,x)$ e $(d,x,c)$. 
Analogamente, o simplexo $(c,b,e)$ ser\'a dividido em dois 
simplexos de dimens\~ao 2.  Assim obtemos a nova triangula\c c\~ao $K'$ 
no qual $n_0$ aumenta de $2$ e $n_1$ aumenta de 6, 
e $n_2$ aumenta de $4$, ent\~ao a
soma  $n_0 -n_1 + n_2$ mant\'em-se igual para $K$ e $K'$.

Agora, a segunda situa\c c\~ao (figura \ref{subdividi} (ii)) \'e \'obvia.
\end{proof} 

\begin{proof}[Demonstra\c c\~ao do teorema \ref{teo1} usando apenas o m\'etodo de Cauchy] 

Sendo um pol\'igono $K$ no plano $Q \cong \R^2$, triangulado e homeomorfo a um disco $D$, 
com poss\'iveis identifica\c c\~oes dos simplexos 
na borda  de $K$, o que denotamos por $K_0$,
a prova consiste em seis seguintes etapas.

\medskip 
1) Etapa 1:  A primeira etapa \'e construir 
um ``elongamento'' de $K$ na forma de uma pir\^amide. 
Aqui, chamaremos de pir\^amide somente a parte superficial (de dimens\~ao 2)
 da pir\^amide, isto \'e, a uni\~ao das faces da pir\^amide, menos a base.

   \begin{figure}[H] 
\begin{tikzpicture}  [scale= 0.5]

\coordinate [label=left:$b_1$] (B1) at (-6,-0.5);
\coordinate [label=below:$b_6$] (B2) at (-3.5,-4);
\coordinate [label=right:$b_5$] (B3) at (4.5,-4);
\coordinate [label=right:$b_4$] (B4) at (6,-1);
\coordinate [label=right:$b_3$] (B5) at (3.5,4);
\coordinate [label=above:$b_2$] (B6) at (-3,4);
\draw [very thick] (B1)  -- (B2) -- (B3) -- (B4)--(B5) -- (B6) -- (B1);

\coordinate [label=right:$a_2$]  (A1) at (0,2);
\coordinate [label=left:$a_1$] (A2) at (-1,0);
\coordinate [label=below:$a_3$] (A3) at (2,0);
\draw [very thick] (A1)  -- (A2) -- (A3)-- (A1) ;
\coordinate [label=right:$0$]  (0) at (0.3,0.7);
 \foreach \point in {0}
    \fill [black,opacity=.5] (\point) circle (2pt);

\coordinate [label=below:$y_2$] (X1) at (0.8,-2);
\coordinate  [label=left:$y_1$] (X2) at (-2,1);

\draw (B1) -- (X2);
\draw (A2) -- (X2);
\draw (A1) -- (X2);
\draw (B6) -- (X2);
\draw (B2) -- (X2);
\draw (B6) -- (A1);

\draw (A2) -- (X1);
\draw (A3) -- (X1);
\draw (B2) -- (X1);
\draw (B3) -- (X1);
\draw (B3) -- (A3);
\draw (B4) -- (A3);
\draw (B5) -- (A3);
\draw (B5) -- (A1);
\draw (B2) -- (A2);

 \end{tikzpicture}
\caption{A triangula\c c\~ao do pol\'igono $K$.}\label{Pyramide70}
\end{figure}
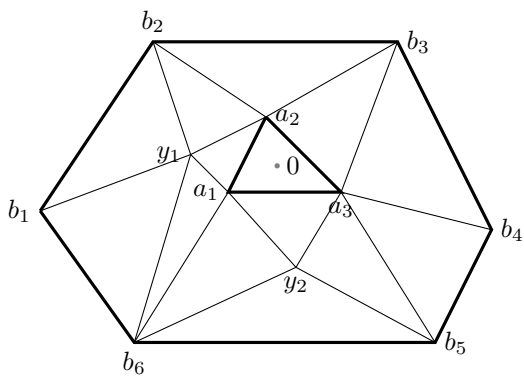

Podemos supor que a origem $0$ de $\R^2$ est\'a em um simplexo $\sigma_0$ 
de dimens\~ao 2 (um tri\^angulo)  no interior do pol\'igono $K$. 
Dentro de $\R^2$ consideramos a m\'etrica Euclidiana. 
Podemos supor que as dist\^ancias da origem aos v\'ertices 
da triangula\c c\~ao sejam diferentes, 
pois caso contr\'ario, isso pode ser realizado por uma pequena perturba\c c\~ao 
sem mudar a estrutura do complexo simplicial 
e a demonstra\c c\~ao do teorema mante-se v\'alida.  

Denotamos por $a_1, a_2, a_3$ os v\'ertices
de $\sigma_0$ e  por $b_1, \ldots , b_k$ os v\'ertices da  triangula\c c\~ao de $K_0$. 
Os  demais outros v\'ertices est\~ao denotados
da seguinte maneira: Chamamos de $y_1$ o  v\'ertice mais perto de $0$ 
e $y_{2}, \ldots, y_{n}$ os 
v\'ertices na ordem crescente da dist\^ancia de $0$. 
Podemos supor tamb\'em que a dist\^ancia da origem ao v\'ertice  
$y_1$ \'e maior que as dist\^ancias da origem aos v\'ertices de $\sigma_0$.

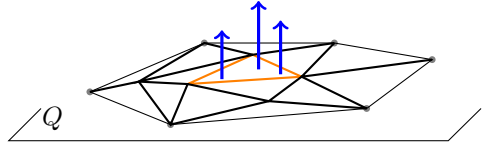
\begin{figure}[H] 
\begin{tikzpicture}  [scale= 0.65]
\draw (-3.76,-3.2) node {$Q$};
\draw (-4,-3) --(-4.66, -3.66) -- (4.33,-3.66) -- (5,-3);
\coordinate (Y1) at (-3,-2.66);
\coordinate (Y2) at (-0.66,-1.66);
\coordinate  (Y3) at (2,-1.66);
\coordinate  (Y4) at (4,-2);
\coordinate  (Y5) at (2.66,-3);
\coordinate   (Y6) at (-1.35,-3.33);
\draw (Y1) -- (Y2) -- (Y3) -- (Y4) --  (Y5) -- (Y6) -- (Y1);
 \foreach \point in {Y1,Y2,Y3,Y4,Y5,Y6}
    \fill [black,opacity=.5] (\point) circle (2pt);
    
\coordinate (A1) at (-1,-2.5);
\coordinate (A2) at (0.33,-1.9);
\coordinate  (A3) at (1.33,-2.35);
\coordinate  (X1b) at (0.66,-2.85);
\coordinate  (X2b) at (-2,-2.45);
\draw  [thick,orange](A1) -- (A2) -- (A3)--(A1);
\draw[thick] (Y3)--(A2)-- (Y2) -- (X2b) --(A2);
\draw[thick] (Y1)--(X2b)-- (A1) -- (Y6) --(X2b);
\draw[thick] (A1)--(X1b)-- (Y6) ;
\draw[thick] (Y5)--(X1b)-- (A3) -- (Y5);
\draw[thick] (Y4)--(A3) -- (Y3);

\draw[very thick,blue,->] (-0.3,-2.4) -- (-0.3,-1.4);
\draw[very thick,blue,->] (0.45,-2.2) -- (0.45,-0.8);
\draw[very thick,blue,->] (0.9,-2.3) -- (0.9,-1.2);

 \end{tikzpicture}
\caption{O levantamento do pol\'igono.}\label{a}
\end{figure}


Vamos construir uma pir\^amide $\Pi$ em $\R^3$, acima de $K$, fixando 
a borda $K_0$ como sendo a base da pir\^amide  no plano $Q= \R^2$ horizontal 
em $\R^3$.  

Para $i=0,\ldots, n+1$, consideramos os planos $P_i$ paralelos  e t\^em dist\^ancias $n-i+1$ 
relativamente ao plano de base $Q$ em $\R^3$. 
Agora, para $i=1,\ldots, n$, chamamos de $x_i$ a proje\c c\~ao ortogonal do ponto $y_i$ 
sobre o plano $P_i$.  No plano 
$P_{0}$, denotamos por $u_1, u_2, u_3$ 
as proje\c c\~oes ortogonais dos pontos  $a_1, a_2, a_3$
(veja figura \ref{Pyramide5} (i)).

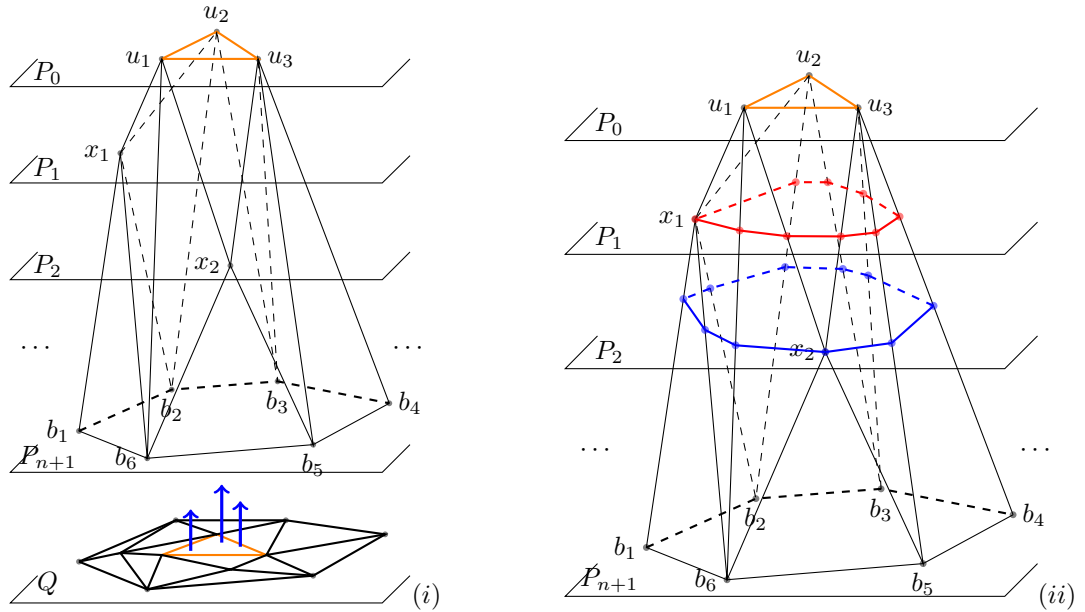
\begin{figure}[H] 
\begin{tikzpicture}  [scale= 0.55]

\coordinate [label=left:$u_1$]  (A1) at (-1,8);
 \coordinate [label=above:$u_2$]  (A2) at (0.33,8.66);
 \coordinate [label=right:$u_3$]  (A3) at (1.33,8);
 \foreach \point in {A1,A2,A3}
    \fill [black,opacity=.5] (\point) circle (2pt);
\draw[thick,orange]  (A1) -- (A2) -- (A3)-- (A1);

 \coordinate [label=left:$b_1$]  (B1) at (-3,-1);
 \coordinate [label=below:$b_2$]  (B2) at (-0.76,0);
 \coordinate [label=below:$b_3$]  (B3) at (1.8,0.2);
  \coordinate [label=right:$b_4$]  (B4) at (4.5,-0.33);
 \coordinate [label=below:$b_5$]  (B5) at (2.66,-1.33);
 \coordinate [label= left:$b_6$]  (B6) at (-1.35,-1.66);

 \foreach \point in {B1,B2,B3,B4,B5,B6}
    \fill [black,opacity=.5] (\point) circle (2pt);
\draw [thick, dashed] (B1)  -- (B2) -- (B3) -- (B4);
\draw  (B6) -- (B5) -- (B4);

 \coordinate [label=left:$x_2$]  (X2) at (0.66,3);
 \coordinate [label=left:$x_1$]  (X1) at (-2,5.72);
 \foreach \point in {X1,X2}
    \fill [black,opacity=.5] (\point) circle (2pt);

\draw (X1) -- (B1) -- (B6) -- (X1)-- (A1) -- (B6) -- (X2) -- (A1);
\draw (B4) -- (A3)-- (B5) -- (X2) -- (A3);
\draw [dashed] (A2) -- (B2) -- (X1) -- (A2) -- (B3) -- (A3);

\draw (-3.76,7.66) node {$P_{0}$};
\draw (-4,8) --(-4.66, 7.33) -- (4.33,7.33) -- (5,8);

\draw (-3.76,5.33) node {$P_1$};
\draw (-4,5.66) --(-4.66, 5) -- (4.33,5) -- (5,5.66);

\draw (-3.76,3) node {$P_2$};
\draw (-4,3.33) --(-4.66, 2.66) -- (4.33,2.66) -- (5,3.33);

\draw(-4,1) node{$\cdots$};
\draw(5,1) node{$\cdots$};

\draw (-3.76,-1.66) node {$P_{n+1}$};
\draw (-4,-1.33) --(-4.66, -2) -- (4.33,-2) -- (5,-1.33);

\draw (-3.76,-4.7) node {$Q$};
\draw (-4,-4.5) --(-4.66, -5.16) -- (4.33,-5.16) -- (5,-4.5);
\coordinate (Y1) at (-3,-4.16);
\coordinate (Y2) at (-0.66,-3.16);
\coordinate  (Y3) at (2,-3.16);
\coordinate  (Y4) at (4.4,-3.5);
\coordinate  (Y5) at (2.66,-4.5);
\coordinate   (Y6) at (-1.35,-4.83);
\draw [thick] (Y1) -- (Y2) -- (Y3) -- (Y4) --  (Y5) -- (Y6) -- (Y1);
 \foreach \point in {Y1,Y2,Y3,Y4,Y5,Y6}
    \fill [black,opacity=.5] (\point) circle (2pt);
    
\coordinate (A1) at (-1,-4);
\coordinate (A2) at (0.33,-3.5);
\coordinate  (A3) at (1.53,-4);
\coordinate  (X1b) at (0.66,-4.35);
\coordinate  (X2b) at (-2,-3.95);
\draw  [thick,orange](A1) -- (A2) -- (A3)--(A1);
\draw[thick] (Y3)--(A2)-- (Y2) -- (X2b) --(A2);
\draw[thick] (Y1)--(X2b)-- (A1) -- (Y6) --(X2b);
\draw[thick] (A1)--(X1b)-- (Y6) ;
\draw[thick] (Y5)--(X1b)-- (A3) -- (Y5);
\draw[thick] (Y4)--(A3) -- (Y3);

\draw[very thick,blue,->] (-0.3,-3.9) -- (-0.3,-2.9);
\draw[very thick,blue,->] (0.45,-3.7) -- (0.45,-2.3);
\draw[very thick,blue,->] (0.9,-3.8) -- (0.9,-2.7);

\draw (5.4,-5) node {$(i)$};
 \end{tikzpicture}
\quad\quad\quad\quad
\begin{tikzpicture} [scale= 0.65]

 \coordinate [label=left:$u_1$]  (A1) at (-1,8);
 \coordinate [label=above:$u_2$]  (A2) at (0.33,8.66);
 \coordinate [label=right:$u_3$]  (A3) at (1.33,8);
 \foreach \point in {A1,A2,A3}
    \fill [black,opacity=.5] (\point) circle (2pt);
\draw[thick,orange]  (A1) -- (A2) -- (A3)-- (A1);

 \coordinate [label=left:$b_1$]  (B1) at (-3,-1);
 \coordinate [label=below:$b_2$]  (B2) at (-0.76,0);
 \coordinate [label=below:$b_3$]  (B3) at (1.8,0.2);
  \coordinate [label=right:$b_4$]  (B4) at (4.5,-0.33);
 \coordinate [label=below:$b_5$]  (B5) at (2.66,-1.33);
 \coordinate [label= left:$b_6$]  (B6) at (-1.35,-1.66);

 \foreach \point in {B1,B2,B3,B4,B5,B6}
    \fill [black,opacity=.5] (\point) circle (2pt);
\draw [thick, dashed] (B1)  -- (B2) -- (B3) -- (B4);
\draw  (B6) -- (B5) -- (B4);

 \coordinate [label=left:$x_2$]  (X2) at (0.66,3);
 \coordinate [label=left:$x_1$]  (X1) at (-2,5.72);
 \foreach \point in {X1,X2}
    \fill [black,opacity=.5] (\point) circle (2pt);

\draw (X1) -- (B1) -- (B6) -- (X1)-- (A1) -- (B6) -- (X2) -- (A1);
\draw (B4) -- (A3)-- (B5) -- (X2) -- (A3);
\draw [dashed] (A2) -- (B2) -- (X1) -- (A2) -- (B3) -- (A3);

\draw (-3.76,7.66) node {$P_{0}$};
\draw (-4,8) --(-4.66, 7.33) -- (4.33,7.33) -- (5,8);

\draw (-3.76,5.33) node {$P_1$};
\draw (-4,5.66) --(-4.66, 5) -- (4.33,5) -- (5,5.66);

\draw (-3.76,3) node {$P_2$};
\draw (-4,3.33) --(-4.66, 2.66) -- (4.33,2.66) -- (5,3.33);

\draw(-4,1) node{$\cdots$};
\draw(5,1) node{$\cdots$};

\draw (-3.76,-1.66) node {$P_{n+1}$};
\draw (-4,-1.33) --(-4.66, -2) -- (4.33,-2) -- (5,-1.33);

\coordinate(J1) at (8,3.16);
\coordinate (C1) at (intersection of A1--B6 and X1--J1);
\draw [thick,red](X1) -- (C1);

\coordinate  (J2) at (8,4.4);
\coordinate(C2) at (intersection of A1--X2 and C1--J2);
\draw [thick,red](C1) -- (C2);

\coordinate (J3) at (4,5.35);
\coordinate (C3) at (intersection of A3--X2 and C2--J3);
\draw [thick,red](C2) -- (C3);

\coordinate (J4) at (4,5.7);
\coordinate (C4) at (intersection of A3--B5 and C3--J4);
\draw [thick,red](C3) -- (C4);

\coordinate (J5) at (4,7);
\coordinate (C5) at (intersection of A3--B4 and C4--J5);
\draw [thick,red](C4) -- (C5);

\coordinate (J6) at (-2,8.4);
\coordinate (C6) at (intersection of A3--B3 and C5--J6);
\draw [thick,dashed,red](C5) -- (C6);

\coordinate (J7) at (-4,8);
\coordinate (C7) at (intersection of A2--B3 and C6--J7);
\draw [thick,dashed,red](C6) -- (C7);

\coordinate (J8) at (-8,6.5);
\coordinate (C8) at (intersection of A2--B2 and C7--J8);
\draw [thick,dashed,red](C7) -- (C8);

\draw [thick,dashed,red](C8) -- (X1);

 \foreach \point in {X1,C1,C2,C3,C4,C5,C6,C7,C8}
    \fill [red,opacity=.5] (\point) circle (2.3pt);
    

\coordinate (K1) at (8,4);
\coordinate (D1) at (intersection of A3--B5 and X2--K1);
\draw [thick,blue](X2) -- (D1);

\coordinate (K2) at (8,8.5);
\coordinate  (D2) at (intersection of A3--B4 and D1--K2);
\draw [thick,blue](D1) -- (D2);

\coordinate (K3) at (-8,9);
\coordinate (D3) at (intersection of A3--B3 and D2--K3);
\draw [thick,dashed,blue](D2) -- (D3);

\coordinate (K4) at (-8,7);
\coordinate (D4) at (intersection of A2--B3 and D3--K4);
\draw [thick,dashed,blue](D3) -- (D4);

\coordinate (K5) at (-8,5);
\coordinate (D5) at (intersection of A2--B2 and D4--K5);
\draw [thick,dashed,blue](D4) -- (D5);

\coordinate (K6) at (-8,2.5);
\coordinate (D6) at (intersection of X1--B2 and D5--K6);
\draw [thick,dashed,blue](D5) -- (D6);

\coordinate (K7) at (-8,1.8);
\coordinate (D7) at (intersection of X1--B1 and D6--K7);
\draw [thick,dashed,blue](D6) -- (D7);

\coordinate (K8) at (8,-10.5);
\coordinate (D8) at (intersection of X1--B6  and D7--K8);
\draw [thick,blue](D7) -- (D8);

\coordinate (K9) at (8,-1.5);
\coordinate (D9) at (intersection of A1--B6 and D8--K9);
\draw [thick,blue](D8) -- (D9);

\draw [thick,blue](D9) -- (X2);

 \foreach \point in {X2,D1,D2,D3,D4,D5,D6,D7,D8,D9}
    \fill [blue,opacity=.5] (\point) circle (2.3pt);
    
    \draw (5.4,-2) node {$(ii)$};
    
 \end{tikzpicture}
 \bigskip
 \caption{A pir\^amide $\Pi$ e a decomposi\c c\~ao $L'$ da pir\^amide $\Pi$.}\label{Pyramide5}
\end{figure}

A triangula\c c\~ao do pol\'igono 
induz uma triangula\c c\~ao $L$ na pir\^amide $\Pi$ 
levantando cada 
simplexo $[b_i, y_j]$ em $[b_i, x_j]$, cada $[y_i, y_j]$ em $[x_i, x_j]$ e cada $[y_i, a_j]$ em 
$[x_i, u_j]$. Da mesma maneira, levantamos os simplexos de dimens\~ao 2. 

\bigskip 

2) Etapa 2: Constru\'imos uma sub-decomposi\c c\~ao $L'$ da triangula\c c\~ao $L$ 
da pir\^amide $\Pi$, tal que a interse\c c\~ao dos planos $P_i$ com a pir\^amide seja triangulada da seguinte maneira: 
Definimos novos v\'ertices 
de $L'$ como interse\c c\~oes de $1$-simplexos de $L$ com planos $P_i$.
 Da mesma maneira, 
definimos tamb\'em novos $1$-simplexos de $L'$ 
como interse\c c\~oes de $2$-simplexos de $L$ com planos $P_i$. 
A decomposi\c c\~ao $L'$ da pir\^amide 
cont\'em v\'ertices, arestas (ou $1$-simplexos), 
e faces que podem ser tri\^angulos ou quadril\'ateros (veja figura 
\ref{Pyramide5} (ii)).

3) Etapa 3: Definamos uma sub-triangula\c c\~ao $L''$ de $L$ da seguinte maneira: 
 Usando o lema 
  \ref{subtriang} com $\tau= P_i \cap \sigma$ para cada $2$-simplexo $\sigma$ de $L$ 
e cada plano $P_i$, obtemos  uma sub-triangula\c c\~ao $L''$ de $L$ de tal modo que todos os 
v\'ertices de $L''$ est\~ao situados nos planos $P_i$. Al\'em disso, o lema \ref{subtriang} implica que a soma
$n_0 -n_1 + n_2$ \'e a mesma para as triangula\c c\~oes $L$ e $L''$ (veja figura \ref{Pyramide7}). 

\begin{figure}[H] 
\begin{tikzpicture} [scale= 0.8]

 \coordinate [label=left:$u_1$]  (A1) at (-1,8);
 \coordinate [label=above:$u_2$]  (A2) at (0.33,8.66);
 \coordinate [label=right:$u_3$]  (A3) at (1.33,8);
 \foreach \point in {A1,A2,A3}
    \fill [black,opacity=.5] (\point) circle (2pt);
\draw[thick,orange]  (A1) -- (A2) -- (A3)-- (A1);

 \coordinate [label=left:$b_1$]  (B1) at (-3,-1);
 \coordinate [label=below:$b_2$]  (B2) at (-0.76,0);
 \coordinate [label=below:$b_3$]  (B3) at (1.8,0.2);
  \coordinate [label=right:$b_4$]  (B4) at (4.5,-0.33);
 \coordinate [label=below:$b_5$]  (B5) at (2.66,-1.33);
 \coordinate [label= left:$b_6$]  (B6) at (-1.35,-1.66);

 \foreach \point in {B1,B2,B3,B4,B5,B6}
    \fill [black,opacity=.5] (\point) circle (2pt);
\draw [thick, dashed] (B1)  -- (B2) -- (B3) -- (B4);
\draw  (B6) -- (B5) -- (B4);

 \coordinate [label=left:$x_2$]  (X2) at (0.66,3);
 \coordinate [label=left:$x_1$]  (X1) at (-2,5.72);
 \foreach \point in {X1,X2}
    \fill [black,opacity=.5] (\point) circle (2pt);

\draw (X1) -- (B1) -- (B6) -- (X1)-- (A1) -- (B6) -- (X2) -- (A1);
\draw (B4) -- (A3)-- (B5) -- (X2) -- (A3);
\draw [dashed] (A2) -- (B2) -- (X1) -- (A2) -- (B3) -- (A3); 

\draw (-3.76,7.66) node {$P_{0}$};
\draw (-4,8) --(-4.66, 7.33) -- (4.33,7.33) -- (5,8);

\draw (-3.76,5.33) node {$P_1$};
\draw (-4,5.66) --(-4.66, 5) -- (4.33,5) -- (5,5.66);

\draw (-3.76,3) node {$P_2$};
\draw (-4,3.33) --(-4.66, 2.66) -- (4.33,2.66) -- (5,3.33);

\draw(-4,1) node{$\cdots$};
\draw(5,1) node{$\cdots$};

\draw (-3.76,-1.66) node {$P_{n+1}$};
\draw (-4,-1.33) --(-4.66, -2) -- (4.33,-2) -- (5,-1.33);


\coordinate(J1) at (8,3.16);
\coordinate (C1) at (intersection of A1--B6 and X1--J1);
\draw [thick,red](X1) -- (C1);

\coordinate  (J2) at (8,4.4);
\coordinate(C2) at (intersection of A1--X2 and C1--J2);
\draw [thick,red](C1) -- (C2);

\coordinate (J3) at (4,5.35);
\coordinate (C3) at (intersection of A3--X2 and C2--J3);
\draw [thick,red](C2) -- (C3);

\coordinate (J4) at (4,5.7);
\coordinate (C4) at (intersection of A3--B5 and C3--J4);
\draw [thick,red](C3) -- (C4);

\coordinate (J5) at (4,7);
\coordinate (C5) at (intersection of A3--B4 and C4--J5);
\draw [thick,red](C4) -- (C5);

\coordinate (J6) at (-2,8.4);
\coordinate (C6) at (intersection of A3--B3 and C5--J6);
\draw [thick,dashed,red](C5) -- (C6);

\coordinate (J7) at (-4,8);
\coordinate (C7) at (intersection of A2--B3 and C6--J7);
\draw [thick,dashed,red](C6) -- (C7);

\coordinate (J8) at (-8,6.5);
\coordinate (C8) at (intersection of A2--B2 and C7--J8);
\draw [thick,dashed,red](C7) -- (C8);

\draw [thick,dashed,red](C8) -- (X1);


\coordinate (K1) at (8,4);
\coordinate (D1) at (intersection of A3--B5 and X2--K1);
\draw [thick,blue](X2) -- (D1);

\coordinate (K2) at (8,8.5);
\coordinate  (D2) at (intersection of A3--B4 and D1--K2);
\draw [thick,blue](D1) -- (D2);

\coordinate (K3) at (-8,9);
\coordinate (D3) at (intersection of A3--B3 and D2--K3);
\draw [thick,dashed,blue](D2) -- (D3);

\coordinate (K4) at (-8,7);
\coordinate (D4) at (intersection of A2--B3 and D3--K4);
\draw [thick,dashed,blue](D3) -- (D4);

\coordinate (K5) at (-8,5);
\coordinate (D5) at (intersection of A2--B2 and D4--K5);
\draw [thick,dashed,blue](D4) -- (D5);

\coordinate (K6) at (-8,2.5);
\coordinate (D6) at (intersection of X1--B2 and D5--K6);
\draw [thick,dashed,blue](D5) -- (D6);

\coordinate (K7) at (-8,1.8);
\coordinate (D7) at (intersection of X1--B1 and D6--K7);
\draw [thick,dashed,blue](D6) -- (D7);

\coordinate (K8) at (8,-10.5);
\coordinate (D8) at (intersection of X1--B6  and D7--K8);
\draw [thick,blue](D7) -- (D8);

\coordinate (K9) at (8,-1.5);
\coordinate (D9) at (intersection of A1--B6 and D8--K9);
\draw [thick,blue](D8) -- (D9);

\draw [thick,blue](D9) -- (X2);


\draw [thick,green](A1) -- (C3);
\draw [thick,green](X1) -- (D9);
\draw [thick,green](C1) -- (X2);
\draw [thick,green](C4) -- (X2);
\draw [thick,green](C5) -- (D1);
\draw [thick,green](D7) -- (B6);
\draw [thick,green](D2) -- (B5);

\draw [thick,dashed,green](A3) -- (C7);
\draw [thick,dashed,green](C6) -- (D2);
\draw [thick,dashed,green](C7) -- (D3);
\draw [thick,dashed,green](C8) -- (D4);
\draw [thick,dashed,green](C8) -- (D6);
\draw [thick,dashed,green](D3) -- (B4);
\draw [thick,dashed,green](D4) -- (B2);
\draw [thick,dashed,green](D6) -- (B1);

 \end{tikzpicture}
\caption{A sub-triangula\c c\~ao $L''$ da pir\^amide $\Pi$.}\label{Pyramide7}
\end{figure}
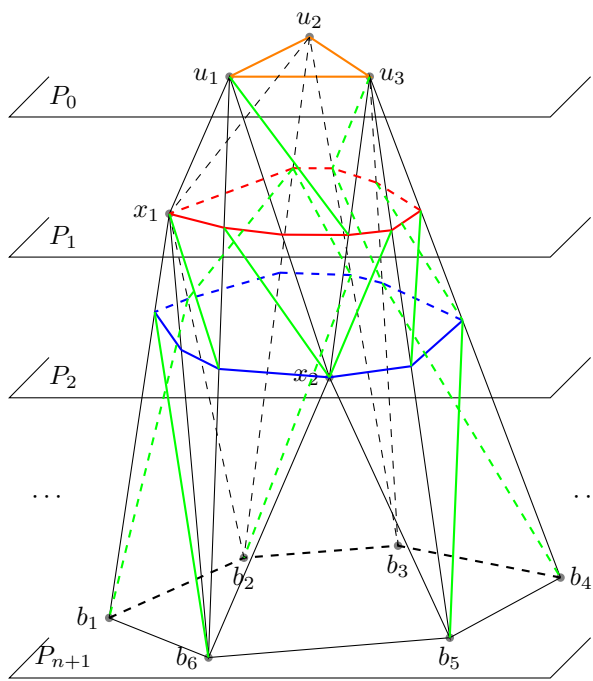

A proje\c c\~ao de $L''$ sobre o plano $P_0$  induz uma sub-triangula\c c\~ao $K''$ da triangula\c c\~ao 
$K$ (figura \ref{Pyramide71}). 

\begin{figure}[H] 
\begin{tikzpicture}  [scale= 0.5]

\coordinate [label=left:$b_1$] (B1) at (-6,-0.5);
\coordinate [label=below:$b_6$] (B2) at (-3.5,-4);
\coordinate [label=right:$b_5$] (B3) at (4.5,-4);
\coordinate [label=right:$b_4$] (B4) at (6,-1);
\coordinate [label=right:$b_3$] (B5) at (3.5,4);
\coordinate [label=above:$b_2$] (B6) at (-3,4);
\draw [very thick] (B1)  -- (B2) -- (B3) -- (B4)--(B5) -- (B6) -- (B1);

\coordinate [label=right:$a_2$]  (A1) at (0,2);
\coordinate [label=left:$a_1$] (A2) at (-1,0);
\coordinate [label=below:$a_3$] (A3) at (2,0);
\draw [very thick] (A1)  -- (A2) -- (A3)-- (A1) ;

\coordinate [label=below:$y_2$] (X1) at (0.8,-2);
\coordinate  [label=left:$y_1$] (X2) at (-2,1);
\coordinate (C11) at (-1.5,-1.5);
\coordinate (C1) at (intersection of X2--C11 and A2--B2);
\coordinate (C21) at (0,-0.7);
\coordinate (C2) at (intersection of C1--C21 and A2--X1);
\coordinate  (C31) at (2,-0.68);
\coordinate  (C3) at (intersection of C2--C31 and A3--X1);
\coordinate (C41) at (3,-1.1);
\coordinate (C4) at (intersection of C3-- C41 and A3--B3);
\coordinate (C51) at (3,-0.3);
\coordinate (C61) at (2.5,1.5);
\coordinate (C71) at (1.5,2.7);
\coordinate (C81) at (-1,2.5);
\coordinate (C5) at (intersection of C4-- C51 and A3--B4);
\coordinate (C6) at (intersection of C5-- C61 and A3--B5);
\coordinate (C7) at (intersection of C6-- C71 and A1--B5);
\coordinate (C8) at (intersection of C7-- C81 and A1--B6);

\draw [very thick,red] (X2) -- (C1)  -- (C2) -- (C3) -- (C4) -- (C5) -- (C6) -- (C7) -- (C8) -- (X2);

\coordinate (X1) at (0.8,-2);
\coordinate (D11) at (3.2,-2);
\coordinate (D21) at (4.2,-0.5);
\coordinate (D31) at (3,3);
\coordinate (D41) at (2.5,3.5);
\coordinate (D61) at (-2.1,3.2);
\coordinate (D6) at (-3.2,2.5);
\coordinate (D51) at (-2.3,3.2);
\coordinate (D71) at (-4.55,0.2);
\coordinate (D81) at (-3.2,-1.2);
\coordinate (D91) at (-2.8,-2.5);
\coordinate (D9) at (intersection of X1--D91 and A2--B2);
\coordinate (D8) at (intersection of D9--D81 and X2--B2);
\coordinate (D7) at (intersection of D8--D71 and X2--B1);
\coordinate (D6) at (intersection of D7--D51 and X2--B6);
\coordinate (D5) at (intersection of D6--D61 and A1--B6);
\coordinate (D4) at (intersection of D5--D41 and A1--B5);
\coordinate (D3) at (intersection of D4--D31 and A3--B5);
\coordinate (D2) at (intersection of D3--D21 and A3--B4);
\coordinate (D1) at (intersection of D2--D11 and A3--B3);
\draw [very thick,blue] (X1) -- (D1)  -- (D2) -- (D3) -- (D4)--(D5) -- (D6) -- (D7) -- (D8) -- (D9)--(X1);

\draw (B1) -- (X2);
\draw (A2) -- (X2);
\draw (A1) -- (X2);
\draw (B6) -- (X2);
\draw (B2) -- (X2);
\draw (B6) -- (A1);

\draw (A2) -- (X1);
\draw (A3) -- (X1);
\draw (B2) -- (X1);
\draw (B3) -- (X1);
\draw (B3) -- (A3);
\draw (B4) -- (A3);
\draw (B5) -- (A3);
\draw (B5) -- (A1);
\draw (B2) -- (A2);

\draw [thick,green](A2) -- (C3);
\draw [thick,green](A3) -- (C7);
\draw [thick,green](X2) -- (D9);
\draw [thick,green](C1) -- (X1);
\draw [thick,green](C4) -- (X1);
\draw [thick,green](C5) -- (D1);
\draw [thick,green](C6) -- (D2);
\draw [thick,green](C7) -- (D3);
\draw [thick,green](C8) -- (D4);
\draw [thick,green](C8) -- (D6);
\draw [thick,green](D7) -- (B2);
\draw [thick,green](D2) -- (B3);
\draw [thick,green](D3) -- (B4);
\draw [thick,green](D4) -- (B6);
\draw [thick,green](D6) -- (B1);

 \end{tikzpicture}
\caption{A sub-triangula\c c\~ao $K''$ do pol\'igono.}\label{Pyramide71}
\end{figure}
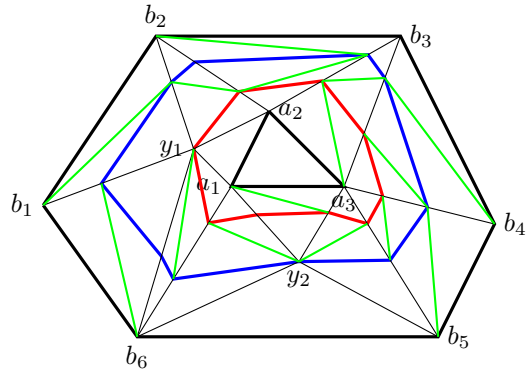

\bigskip 

 4) Etapa 4: Mostramos por indu\c c\~ao 
que a interse\c c\~ao de $L''$ com cada plano $P_i$ \'e uma curva homeomorfa a uma circunfer\^encia. 

Observamos que a proje\c c\~ao ortogonal $\pi$ 
da pir\^amide sobre $Q$ \'e uma bije\c c\~ao entre a triangula\c c\~ao $L''$ (da pir\^amide $\Pi$) e a triangula\c c\~ao $K''$ (do pol\'igono $K$). 

De fato, por constru\c c\~ao, como n\~ao tem arestas verticais na pir\^amide, 
a proje\c c\~ao $\pi : \Pi \to K$ \'e uma bije\c c\~ao entre as triangula\c c\~oes $L''$ e $K''$. 
Cada v\'ertice de $K''$ corresponde a um v\'ertice de $L''$, e da mesma maneira 
para as arestas e os tri\^angulos de $L''$ e $K''$, respectivamente. 
Isso implica que $K''$ \'e uma triangula\c c\~ao. 

Vamos mostrar que todas as interse\c c\~oes $B_i = L \cap P_i$ est\~ao homeomorfas a circunfer\^encias. 
Iniciando com $L \cap P_{0}$, que \'e a borda denotada por  
$B_{0}$ do tri\^angulo $\sigma_0$, e que \'e homeomorfa a uma circunfer\^encia. 

Note que, por um lado, sabemos que a proje\c c\~ao $\Pi \to D$ \'e uma bije\c c\~ao, e por outro lado,
na triangula\c c\~ao  $K''$, e tamb\'em $L''$, toda aresta \'e borda de exatamente dois tri\^angulos. 
Suponhamos que $B_i$ seja homeomorfa a uma circunfer\^encia. Se $B_{i+1}$ n\~ao 
for homeomorfa a uma circunfer\^encia, ter\'a um ponto m\'ultiplo 
(como na figura \ref{duplo}).  
Neste caso,  a aresta $(a,b)$, que \'e uma aresta da $L''$, \'e a  borda de 
tr\^es  tri\^angulos (simplexos de dimens\~ao 2),
que \'e contradi\c c\~ao. 

\begin{figure}[H] 
\begin{tikzpicture} [scale= 0.6]
\draw (1, 1.5) -- (0,0) -- (8,0) -- (9, 1.5);
\node at (1,0.5) {$P_{i+1}$};
\draw (1, 5.5) -- (0,4) -- (8,4) -- (9, 5.5);
\node at (1,4.5) {$P_{i}$};


\draw[thick,densely dotted] (4, 5) -- (3.7619,4); 
\draw (3.7619,4) -- (3, 0.8); 
\node at (4,5) [above]{$b$};


\draw[thick,densely dotted] (4, 5) -- (4.25,4); 
\draw (4.25,4) -- (5, 1); 
\node at (5,1) [below]{$a$};


\draw[thick,densely dotted] (4, 5) -- (4.6666,4); 
\draw (4.6666,4) -- (7, 0.5); 

\draw[thick,densely dotted] (4, 5) -- (5.0811,4); 
\draw (5.0811,4) -- (8, 1); 

\draw (3, 0.8) -- (5,1); 

\draw (5,1) -- (7,0.5); 

\draw[thick,densely dotted] (5,1) -- (6.6667, 1); 
\draw (6.6667, 1) -- (8,1); 

\end{tikzpicture}
\bigskip
\caption{Figura n\~ao admissivel: A interse\c c\~ao $B_i$ de $L$ com cada plano $P_i$ n\~ao tem ponto  m\'ultiplo.} \label{duplo}
\end{figure}
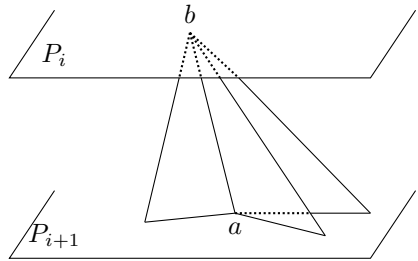

5) Etapa 5: Aplicar o m\'etodo de Cauchy  (se\c c\~ao \ref{methode_Cauchy}): Agora, vamos 
aplicar o m\'etodo de Cauchy na pir\^amide iniciando removendo o tri\^angulo $\sigma_0$.
Suponhamos que j\'a removemos todos tri\^angulos acima do plano $P_i$. 
Vamos mostrar que removendo todos os tri\^angulos na faixa situada entre $P_i$ e $P_{i+1}$, 
para $i=0, \ldots, n$, n\~ao muda a soma $n_0 - n_1 + n_2$. Isso se pode fazer 
pois a faixa (aberta) entre $B_{i}$ e $B_{i+1}$ n\~ao possui v\'ertices, {como seguinte:}
 
Escolhemos um tri\^angulo $(\alpha_0, \alpha_1,\beta_0)$  da faixa entre $B_{i}$ e $B_{i+1}$, 
onde os v\'ertices $\alpha_0$ e $\alpha_1$ pertencem a $B_i$ e $\beta_0$ pertence a $B_{i+1}$. 
Primeiramente, removemos o tri\^angulo $(\alpha_0, \alpha_1,\beta_0)$ pela ``opera\c c\~ao I'', 
sem mudar a soma $n_0 - n_1 + n_2$. Agora, a aresta  $(\alpha_1,\beta_0)$ tamb\'em \'e  a 
borda seja de um tri\^angulo $(\alpha_1,\beta_0, \beta_1)$, com $\beta_1 \in B_{i+1}$
(veja figura \ref{situa} (a)), 
seja de um tri\^angulo $(\alpha_1, \alpha_2,\beta_0)$, com $\alpha_2 \in B_{i}$ (veja figura \ref{situa} (b)). 
No primeiro caso, o tri\^angulo $(\alpha_1,\beta_0, \beta_1)$ pode ser removido pela ``opera\c c\~ao I'' 
de Cauchy, e no segundo caso o tri\^angulo $(\alpha_1, \alpha_2,\beta_0)$ 
pode ser removido pela ``opera\c c\~ao II'' de Cauchy. Nos dois casos, 
a soma $n_0 - n_1 + n_2$ n\~ao est\'a alterada.

\begin{figure} [H]

 \begin{tikzpicture}  [scale= 0.55]
 
 \node (1) at (-6,2.5) [rotate=340] {$\Rightarrow$};
\coordinate (a) at (-5.3,4) ;
\coordinate (ak) at (-3,03.45);
\coordinate (a0) at (0,3.2) ;
\coordinate (a1) at (2,3.5) ;
\coordinate (a2) at (4.4,4.45) ;

\node (Bi) at (7.5,4.45) {$B_i$};
          
\coordinate (b) at (-5,1);
\coordinate (bj) at (-3,0.45); 
\coordinate (b0) at (0,0.2) ;
\coordinate (b1) at (2,0.5) ;
\coordinate (b2) at (4.6,1.45) ;     

\node (Bi1) at (7.5,1.45) {$B_{i+1}$};

\node (2) at (5.5,3.2)[rotate=30]  {$\Rightarrow$};

 \fill[fill=pink]   (a0) -- (b0)-- (a1) -- (a0);
 
\draw[thick] plot[smooth] coordinates {(-6,4.4)(-5.3,4) (-3,03.45) (0,3.2) (2,3.5) (4.4,4.45) (6,5.3)};     
\draw[thick] plot[smooth] coordinates {(-6.2,1.5)(-5,1) (-3,0.45) (0,0.2) (2,0.5) (4.6,1.45) (6,2.3)};     

\coordinate (x1) at (-6,6.5);
\coordinate (x2) at (-3,5.8);
\coordinate (y) at (0,5.5);
\coordinate (x3) at (3,6.3);
\coordinate (x4) at (6,7.5);
 \fill[fill=blue!20]  (x1) -- (-6,4.4)-- (-5.3,4)-- (-3,03.45) --(0,3.2) -- (2,3.5)-- (4.4,4.45) --(6,5.3)-- 
 (x4) -- (x3) -- (y) -- (x2) -- (x1) ;

\draw (a) -- (b) --(ak) --(bj) ;
 \draw[very thick,red] (a0) -- (bj);
\draw (a0) -- (b0) -- (a1) -- (b2) -- (a2);
\draw (ak) -- (bj);
\draw (a1) -- (b1);

\filldraw (a) node {$\bullet$} ;
\filldraw (ak) node {$\bullet$} node [above] {$\alpha_k$};
\filldraw (a0) node {$\bullet$} node [above] {$\alpha_0$};
\filldraw (a1) node {$\bullet$} node [above] {$\alpha_1$};
\filldraw (a2) node {$\bullet$} node [above] {$\alpha_2$};

\filldraw (b) node {$\bullet$} ;
\filldraw (bj) node {$\bullet$} node [below] {$\beta_j$};
\filldraw (b0) node {$\bullet$} node [below] {$\beta_0$};
\filldraw (b1) node {$\bullet$} node [below] {$\beta_1$};
\filldraw (b2) node {$\bullet$} node [below] {$\beta_2$};

\node at (4.6,-1) {(a)};
 \end{tikzpicture}
 \begin{tikzpicture} [scale= 0.55]

 \node (1) at (-6,2.5) [rotate=340]{$\Rightarrow$};
\coordinate (a) at (-5.3,4) ;
\coordinate (ak) at (-3,03.45);
\coordinate (a0) at (0,3.2) ;
\coordinate (a1) at (2,3.5) ;
\coordinate (a2) at (4.4,4.45) ;
          
\coordinate (b) at (-5,1);
\coordinate (bj) at (-3,0.45); 
\coordinate (b0) at (0,0.2) ;
\coordinate (b1) at (2,0.5) ;
\coordinate (b2) at (4.6,1.45) ;         

\node (2) at (5.5,3.2)[rotate=30] {$\Rightarrow$};

 \fill[fill=pink]   (a0) -- (b0)-- (a1) -- (a0);
 
\draw[thick] plot[smooth] coordinates {(-6,4.4)(-5.3,4) (-3,03.45) (0,3.2) (2,3.5) (4.4,4.45) (6,5.3)};     
\draw[thick] plot[smooth] coordinates {(-6.2,1.5)(-5,1) (-3,0.45) (0,0.2) (2,0.5) (4.6,1.45) (6,2.3)};     

\coordinate (x1) at (-6,6.5);
\coordinate (x2) at (-3,5.8);
\coordinate (y) at (0,5.5);
\coordinate (x3) at (3,6.3);
\coordinate (x4) at (6,7.5);
 \fill[fill=blue!20]  (x1) -- (-6,4.4)-- (-5.3,4)-- (-3,03.45) --(0,3.2) -- (2,3.5)-- (4.4,4.45) --(6,5.3)-- 
 (x4) -- (x3) -- (y) -- (x2) -- (x1) ;

\draw (a) -- (b) --(ak) --(bj) ;
 \draw[very thick,red] (ak) -- (b0);
\draw (a0) -- (b0) -- (a1);
\draw (a0) -- (b0) -- (a1);
\draw (b0) -- (a2);
\draw (ak) -- (bj);
\draw (b2)--(a2) -- (b1);

\filldraw (a) node {$\bullet$} ; 
\filldraw (ak) node {$\bullet$} node [above] {$\alpha_k$};
\filldraw (a0) node {$\bullet$} node [above] {$\alpha_0$};
\filldraw (a1) node {$\bullet$} node [above] {$\alpha_1$};
\filldraw (a2) node {$\bullet$} node [above] {$\alpha_2$};

\filldraw (b) node {$\bullet$} ;
\filldraw (bj) node {$\bullet$} node [below] {$\beta_j$};
\filldraw (b0) node {$\bullet$} node [below] {$\beta_0$};
\filldraw (b1) node {$\bullet$} node [below] {$\beta_1$};
\filldraw (b2) node {$\bullet$} node [below] {$\beta_2$};
\node at (4.6,-1) {(b)};
  \end{tikzpicture}
  
\caption{Indo de $B_i$ a $B_{i+1}$.}\label{situa}
\end{figure}

Continuamos o processo para os tri\^angulos da faixa, que s\~ao todos de um caso ou do outro, 
at\'e chegar aos \'ultimos v\'ertices de $B_i$ e $B_{i+1}$ situados 
antes de voltar a $\alpha_0$ e $\beta_0$, respectivamente. 
Chamamos estes de $\alpha_k$ e $\beta_j$. Temos duas situa\c c\~oes  poss\'iveis (a) e (b) 
(veja figura \ref{situa}). 
Na situa\c c\~ao (a), os \'ultimos tri\^angulos restantes  est\~ao  $(\alpha_k, \alpha_0, \beta_j)$ 
e $(\alpha_0, \beta_j, \beta_0)$. Neste caso os tri\^angulos $(\alpha_k, \alpha_0, \beta_j)$ 
e $(\alpha_0, \beta_j, \beta_0)$ podem ser  removidos nesta ordem  com a 
``opera\c c\~ao II'' de Cauchy.  Na situa\c c\~ao (b), os \'ultimos tri\^angulos restantes
s\~ao $(\alpha_k, \beta_j,\beta_0)$ 
e $(\alpha_k, \alpha_0, \beta_0)$. Neste caso os tri\^angulos $(\alpha_k, \alpha_0, \beta_0)$ 
e $(\alpha_k, \beta_j, \beta_0)$ podem ser  removidos nesta ordem {tamb\'em} com a 
``opera\c c\~ao II'' de Cauchy. Em todos {os dois} casos, a soma $n_0 - n_1 + n_2$ 
fica a mesma. 

Nesta prova  usamos apenas as opera\c c\~oes de Cauchy. 
\medskip 

6) Etapa 6: A conclus\~ao. 
\medskip

O processo vale at\'e a borda $B_{n+1}$ do \'ultimo buraco, que \'e a borda $K_0$ de $K$, 
e tamb\'em \'e a interse\c c\~ao  de $L''$ com o plano $Q=P_{n+1}$. 

Denotamos por $n_0^K$, $n_1^K$ e $n_2^K$ respectivamente 
os n\'umeros de v\'ertices, arestas e tri\^angulos da triangula\c c\~ao $K$ e 
 usamos a mesma nota\c c\~ao 
para a triangula\c c\~ao $K''$ do disco e a triangula\c c\~ao $K_0$  da borda. Temos 
$$n_0^K - n_1^K + n_2^K = n_0^{K''} - n_1^{K''}  + n_2^{K''} = n_0^{K_0}  - n_1^{K_0} +1.$$ 
 A primeira igualdade vem do fato que o processo de sub-triangula\c c\~ao 
n\~ao muda a soma alterada e a segunda igualdade vem do fato que 
o tri\^angulo $\sigma_0$ foi removido no in\'icio e que $n_2^{K_0} = 0$. 
Aqui tomamos em conta as identifica\c c\~oes 
dos simplexos de $K_0$.

Sendo uma triangula\c c\~ao  do pol\'igono $K$, 
o resultado n\~ao depende de escolhas efetuadas.
\end{proof}

\begin{proof}[{\bf Prova do Teorema \ref{Euler's Theorem} usando  o m\'etodo de Cauchy}]
Seja $P$ um poliedro convexo.
Prosseguimos a representa\c c\~ao planar $K$ de $P$, de acordo com o
primeiro passo da prova de Cauchy (veja a figura \ref{rep-Cauchy}), removendo 
um pol\'igono ${\mathcal F}$. 
Observe que no caso de um poliedro convexo, 
$K$ \'e um pol\'igono sem nenhuma identifica\c c\~ao de 
simplexos na sua borda $K_0$.
Ent\~ao $n_0^{K_0} - n_1^{K_0} + n_2^{K_0} = 0.$ 
O teorema \ref{teo1} implica que $n_0^{\widehat{K}} - n_1^{\widehat{K}} + n_2^{\widehat{K}} = +2$, 
tomando em considera\c c\~ao o pol\'igono removido ${\mathcal F}$ no primeiro passo da prova de Cauchy.
Como o teorema \ref{teo1} \'e provado usando apenas o m\'etodo de Cauchy, a f\'ormula de Euler tamb\'em 
\'e  provada usando apenas o m\'etodo de Cauchy.
\end{proof} 

\begin{remark} 
Como ressaltamos no in\'icio desta se\c c\~ao, usamos na demonstra\c c\~ao acima 
apenas o m\'etodo de Cauchy (se\c c\~ao \ref{methode_Cauchy}),  
sem outras ferramentas. Existem outras maneiras 
para provar o teorema \ref{Euler's Theorem}  (veja, por exemple, \cite{Epp}). 
Por\'em, nestas demonstra\c c\~oes 
s\~ao usadas ferramentas que aparecem depois da \'epoca de Cauchy.
\end{remark}

Antes de prosseguir com as aplica\c c\~oes, damos algumas observa\c c\~oes sobre a prova do Teorema \ref{teo1}.

\begin{remark}
A prova da f\'ormula de Euler usando a  proje\c c\~ao estereogr\'afica \'e um caso particular da prova.
\end{remark}

\begin{remark}
Existem outras maneiras de definir uma ordem de v\'ertices do pol\'igono (triangulado) para construir 
a pir\^amide, sem usar a dist\^ancia euclidiana em $\R^2$. 

Uma maneira poss\'ivel \'e usar a no\c c\~ao de dist\^ancia entre dois v\'ertices como
o menor n\'umero de arestas em um caminho de arestas que une estes  v\'ertices. 
Como na prova do Teorema \ref{teo1}, escolha um simplexo $\sigma_0$ de dimens\~ao $2$
 no interior do pol\'igono $K$ e defina a dist\^ancia $0$ para os tr\^es v\'ertices de 
$\sigma_0$. Decide qualquer ordem entre os v\'ertices cuja dist\^ancia aos v\'ertices de $\sigma_0$  \'e 1,
continua-se  decidindo qualquer ordem entre
v\'ertices cuja dist\^ancia aos v\'ertices de $\sigma_0$ \'e 2, etc. Em seguida, prossegue-se a 
constru\c c\~ao da pir\^amide.

Outra maneira seria iniciar a prova do Teorema \ref{Euler's Theorem}
com o poliedro convexo e ordenar as faces de  dimens\~ao $2$ 
de acordo com o processo de ``shelling'' (veja \cite{Zie} e \cite {BM}). 
As faces de  dimens\~ao $2$  do poliedro s\~ao duais dos v\'ertices do poliedro polar.
Obtemos uma ordem pelos v\'ertices do poliedro polar. Continuamos a prova usando o poliedro polar
ao lugar do poliedro original, sabendo que a soma $n_0 - n_1 + n_2$ \'e a mesma para o 
poliedro e para o seu polar. Note que o ``shelling'' \'e uma ferramenta que foi definida bem 
depois de Cauchy, portanto n\~ao \'e aceit\'avel em nosso contexto. 
Mencionamos isso apenas por uma quest\~ao de completude. 
\end{remark}

\begin{remark}
Na etapa 4 da prova, a proje\c c\~ao no plano $Q$ da interse\c c\~ao de cada plano $P_i$
com a pir\^amide \'e uma curva de Jordan passando por $y_i$.
Al\'em disso, na etapa 5 da nossa prova, deixamos claro que, se o limite do buraco estendido for
homeomorfo a uma circunf\^erencia, ent\~ao o processo de Cauchy funciona.
Neste caso, \'e poss\'ivel prosseguir na etapa 5 seja com a sub-triangula\c c\~ao $L''$ da 
pir\^amide, ou seja com a sub-triangula\c c\~ao $K''$ do pol\'igono $K$.
\end{remark}

\begin{remark}\label{rem111}
Na etapa 5 da prova, usamos apenas as opera\c c\~oes I e II de Cauchy.
Observando que, se alterarmos a ordem de remo\c c\~ao dos \'ultimos tri\^angulos restantes,
por exemplo, na situa\c c\~ao (a), se removermos o tri\^angulo $(\alpha_0, \beta_j, \beta_0)$ e, depois,
o tri\^angulo $(\alpha_k, \alpha_0, \beta_j)$, usaremos primeiro a opera\c c\~ao I de Cauchy e 
depois a opera\c c\~ao que chamamos de opera\c c\~ao III na se\c c\~ao \ref {depois_Cauchy} 
(veja a figura \ref{figuraCE} (d)).
Aqui tamb\'em n\~ao alteramos a soma $n_0 - n_1 + n_2$.
\end{remark}

\begin{remark} 
Como enfatizamos no in\'icio desta se\c c\~ao,  usamos na prova apenas o m\'etodo de Cauchy
(se\c c\~ao \ref{methode_Cauchy}) sem outras ferramentas.
Sabemos muito bem que existem maneiras ``mais modernas e mais r\'apidas'' para provar o 
teorema \ref{teo1}.
No entanto, essas provas usam ferramentas que aparecem depois da \'epoca de Cauchy. 
Em particular, algumas provas usam o Lema de Jordan (veja \cite{Jo}) que, como vimos,
aparece como um artefato em nossa prova.
\end{remark}

{\section{ Aplica\c c\~oes}\label{applications}

A seguir, usando o teorema \ref{teo1}, 
mostramos que o valor $n_0 - n_1 + n_2$ n\~ao depende da triangula\c c\~ao nos 
casos da esfera, do toro, do plano projetivo, da garrafa de Klein e  mesmo no caso de uma superf\'icie singular: o toro pin\c cado.

Em cada caso, usaremos uma representa\c c\~ao planar da superf\'icie 
sob a forma de um pol\'igono triangulado homeomorfa
a um disco com poss\'iveis identifica\c c\~oes na borda, e usaremos um lema 
``de corte''. A id\'eia de  corte foi introduzida, em geral, por Alexander Veblen em um semin\'ario em 1915 (veja \cite{Bra}). Essa id\'eia \'e bem desenvolvida no livro de Hilbert e Cohn-Vossen 
\cite{HC}, em particular para as superf\'icies que damos como exemplos.
O seguinte lema de ``de corte'' ser\'a usado nas  pr\'oximas provas.

\begin{lem} \label{lemageneral}
Seja $\mathcal T$ uma triangula\c c\~ao 
 de uma superf\'icie $S$ compacta.  
Seja $\Gamma$  uma curva cont\'inua 
simples em $S$. Existe uma sub-triangula\c c\~ao ${\mathcal T}'$ de $\mathcal T$, 
com simplexos curvilineares,  
compat\'ivel com a curva (isto \'e, $\Gamma$  \'e uni\~ao de segmentos de ${\mathcal T}'$) tal que o n\'umero 
$n_0 - n_1 + n_2$ \'e o mesmo para $\mathcal T$ e ${\mathcal T}'$. 
\end{lem}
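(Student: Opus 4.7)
A prova seguir\'a a mesma estrat\'egia do Lema \ref{subtriang}, aplicada iterativamente aos arcos em que $\Gamma$ \'e dividida pela triangula\c c\~ao $\mathcal T$. Primeiramente, substitu\'imos $\Gamma$ por uma curva isot\'opica em ``posi\c c\~ao geral'' com respeito a $\mathcal T$, isto \'e, que n\~ao passa por nenhum v\'ertice de $\mathcal T$ e que intersecta cada aresta transversalmente em um n\'umero finito de pontos. Esta perturba\c c\~ao \'e poss\'ivel pois $\Gamma$ \'e cont\'inua e simples e $\mathcal T$ \'e finita. Sejam $p_1, \ldots, p_N$ estes pontos de interse\c c\~ao; eles dividem $\Gamma$ em arcos elementares $\gamma_1, \ldots, \gamma_N$ (ou $\gamma_0, \gamma_1, \ldots, \gamma_N$ se $\Gamma$ n\~ao for fechada), cada um contido em um \'unico tri\^angulo $\sigma_i$ de $\mathcal T$ com extremidades em $\partial \sigma_i$.

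A constru\c c\~ao de ${\mathcal T}'$ procede por indu\c c\~ao nos arcos $\gamma_i$. A cada passo, adicionamos as extremidades do arco corrente como novos v\'ertices (se ainda n\~ao s\~ao v\'ertices da triangula\c c\~ao atual), acrescentamos o arco como aresta curvilinear, e subdividimos o tri\^angulo $\sigma_i$ bem como os tri\^angulos adjacentes que compartilham as arestas contendo as novas extremidades. Quando as duas extremidades de $\gamma_i$ est\~ao em lados distintos de $\sigma_i$, aplica-se diretamente o argumento do Lema \ref{subtriang}: a subdivis\~ao acrescenta $2$ v\'ertices, $6$ arestas e $4$ tri\^angulos, preservando assim a soma $n_0 - n_1 + n_2$. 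Os casos em que uma das extremidades coincide com um v\'ertice j\'a existente da triangula\c c\~ao s\~ao tratados de maneira an\'aloga, com saldo nulo nas varia\c c\~oes dos n\'umeros de simplexos de cada dimens\~ao.

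O obst\'aculo principal \'e tratar o caso em que as duas extremidades de um arco $\gamma_i$ est\~ao no mesmo lado de $\sigma_i$, formando um arco em la\c co dentro do tri\^angulo, situa\c c\~ao n\~ao coberta diretamente pelo Lema \ref{subtriang}. Este caso pode ser resolvido introduzindo um v\'ertice auxiliar em $\sigma_i$, situado no pr\'oprio arco $\gamma_i$ ou entre as duas extremidades, de modo que cada nova subdivis\~ao local corresponda a uma aplica\c c\~ao do Lema \ref{subtriang} ou a uma de suas varia\c c\~oes diretas, sempre preservando a caracter\'istica alterada. Por fim, \'e necess\'ario verificar que, ap\'os o processamento de todos os arcos, o resultado ${\mathcal T}'$ \'e de fato uma triangula\c c\~ao v\'alida (com simplexos curvilineares) de $S$ contendo $\Gamma$ em seu $1$-esqueleto, e que as subdivis\~oes locais s\~ao compat\'iveis entre si quando v\'arios arcos atravessam o mesmo tri\^angulo ou compartilham uma mesma aresta. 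Como cada etapa do processo preserva $n_0 - n_1 + n_2$ pelo argumento local, o saldo global permanece invariante e obtemos a sub-triangula\c c\~ao ${\mathcal T}'$ desejada.
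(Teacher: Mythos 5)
A sua estrat\'egia geral coincide com a da prova do artigo: reduzir \`a transversalidade, decompor $\Gamma$ em arcos elementares contidos em tri\^angulos de ${\mathcal T}$, subdividir os tri\^angulos um a um seguindo o percurso da curva (introduzindo um ponto auxiliar sobre o arco quando ele entra e sai pelo mesmo lado) e verificar que cada modifica\c c\~ao local preserva $n_0 - n_1 + n_2$, concluindo pela finitude do processo.

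O desvio problem\'atico \'e a redu\c c\~ao inicial \`a ``posi\c c\~ao geral'' que evita todos os v\'ertices de ${\mathcal T}$. Primeiro, o lema pede uma sub-triangula\c c\~ao compat\'ivel com a curva dada; substituindo $\Gamma$ por uma c\'opia isot\'opica, voc\^e obt\'em compatibilidade apenas com a curva perturbada. A prova do artigo perturba somente para garantir interse\c c\~ao finita com cada aresta e trata explicitamente os casos em que a curva entra ou sai por um v\'ertice (figuras \ref{subdivi 0}, \ref{subdivi I} e \ref{subdivi II}), com as respectivas contagens. Segundo, esses casos s\~ao exatamente os necess\'arios nas aplica\c c\~oes da se\c c\~ao \ref{applications}: o paralelo do toro pin\c cado deve passar pelo ponto singular $A$ (que \'e necessariamente um v\'ertice de qualquer triangula\c c\~ao), e, ao incorporar sucessivamente v\'arias curvas (meridiano e paralelo do toro, equador e meridianos da esfera), a segunda curva passa obrigatoriamente por v\'ertices criados pela primeira; tais curvas n\~ao podem ser isotopadas para fora desses pontos sem destruir a configura\c c\~ao de corte. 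Voc\^e menciona ``de maneira an\'aloga'' o caso de extremidade em v\'ertice, mas isso contradiz a hip\'otese de posi\c c\~ao geral que voc\^e imp\^os e n\~ao cobre a passagem da curva por um v\'ertice no interior do percurso. Por fim, o caso em la\c co (entrada e sa\'ida pelo mesmo lado) e a compatibilidade quando a curva atravessa v\'arias vezes o mesmo tri\^angulo ficam apenas enunciados como ``a verificar''; na prova do artigo eles s\~ao resolvidos exibindo as subdivis\~oes expl\'icitas (cinco tri\^angulos curvilineares no caso em la\c co) e subdividindo da mesma maneira os tri\^angulos adjacentes, processando os arcos na ordem do percurso da curva.
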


\begin{proof} 
Antes de tudo, podemos supor que a curva $\Gamma$  seja {transversal} a todas {as} 
 arestas de $\mathcal T$, isto \'e, a interse\c c\~ao de $\Gamma$  com {cada} aresta \'e um n\'umero 
finito de pontos. Se n\~ao for o caso, uma pequena perturba\c c\~ao de $\Gamma$  
{permite obter a} transversalidade.

Escolhemos um ponto base (ponto de partida) $x_0$ sobre a curva assim que um sentido 
de percurso da curva. Se a curva n\~ao for fechada, 
definimos o ponto base como uma das extremidades da curva. 
O que segue n\~ao depende nem do ponto de partida,
nem do sentido de orienta\c c\~ao. 

A sub-triangula\c c\~ao ${\mathcal T}'$ est\'a constru\'ida simplexo por simplexo seguindo o percurso 
da curva $\Gamma$. O primeiro simplexo a ser subdividido \'e aquele 
$\sigma_0$ que cont\'em o ponto base. Seja $y$ o (primeiro) ponto de sa\'ida 
de $\sigma_0$ pela curva. O segmento (curvilinear) $(x_0, y)$ ser\'a uma aresta de ${\mathcal T}'$ assim que 
segmentos ligando $x_0$ aos v\'ertices de $\sigma_0$, um dos quais pode ser $(x_0, y)$ se 
o ponto $y$ for um v\'ertice de $\sigma_0$ (figura \ref{subdivi 0} (ii)).

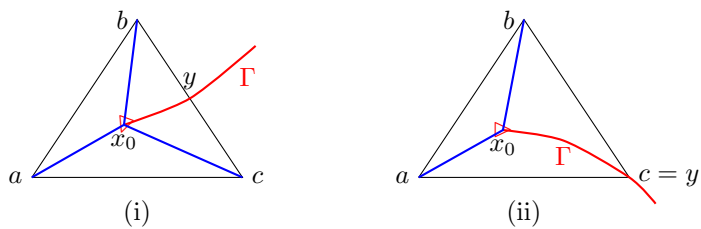
\begin{figure} [H] 
 \begin{tikzpicture}  [scale= 0.35]
  
\draw (-4,0) node[left]{$a$}   --  (0,6) node[left]{$b$} -- (4,0) node[right]{$c$} -- (-4,0);
\draw[thick,red] plot[smooth] coordinates {(-0.5,2) (2,3) (4.5,5)};     
\node at (-0.5,2) [rotate=70, red, very thick]  {$\vartriangleleft$};
\coordinate [label=above:$y$] (y) at (2,3);
\coordinate [label=below:$x_0$] (x_0) at (-0.5,2);
\draw [thick,blue](-4,0) -- (-0.5,2) -- (0,6);
\draw [thick,blue] (-0.5,2) -- (4,0);
\coordinate (u) at (-4,0);
  \draw (4.2,4.5) node[red,below] {$\Gamma$} ;
    \draw (0,-0.5) node[below] {$\rm (i)$} ;
\end{tikzpicture}
\qquad\qquad 
\begin{tikzpicture}  [scale= 0.35] 
    
\draw (-4,0) node[left]{$a$}   --  (0,6) node[left]{$b$} -- (4,0) node[right]{$c=y$} -- (-4,0);
\draw[thick,red] plot[smooth] coordinates {(-0.8,1.8)(1.5,1.4) (4,0) (5,-1)};    
\node at (-0.8,1.8) [rotate=180, red, very thick]  {$\vartriangleleft$}; 
\draw [thick,blue] (-4,0) -- (-0.8,1.8) -- (0,6);
\coordinate (u) at (-4,0);
\coordinate (v) at (4,0);
\coordinate [label=below:$x_0$] (x_0) at (-0.8,1.8);
   \draw (1.5,1.5) node[red,below] {$\Gamma$} ;
       \draw (0,-0.5) node[below] {$\rm (ii)$} ;
\end{tikzpicture}

\caption{Sub-divis\~ao do primeiro simplexo I.}\label{subdivi 0} 
\end{figure}

Agora, basta fazer a constru\c c\~ao para um simplexo $\sigma = (a,b,c)$ no qual a curva $\Gamma$  entra. 
A constru\c c\~ao a seguir sup\~oe que todos {os} simplexos 
encontrados pela curva entre o ponto base e o simplexo $\sigma$ 
{seguindo a orienta\c c\~ao dada}  j\'a est\~ao subdivididos.  
O ponto de ``entrada'' de $\Gamma$  
em $\sigma$ pode ser seja um v\'ertice, seja um ponto $d$ localizado em um lado de $\sigma$.

Se o ponto de entrada da curva $\Gamma$  em $\sigma$ \'e um v\'ertice $a$, 
a curva pode sair em um ponto $d$ localizado seja no lado oposto, 
seja em um lado que {cont\'em} 
o v\'ertice $a$, seja em um outro v\'ertice $b$. 
No primeiro caso (figura \ref{subdivi I} (i)), 
dividimos o tri\^angulo $(a,b,c)$ em dois tri\^angulos (curvilineares) $(a,b,d)$ e $(a,d,c)$. 
No segundo caso (figura \ref{subdivi I} (ii)),  seja $e\in (a,c)$ 
 o ponto {no qual} a curva $\Gamma$  saia do tri\^angulo, 
escolhemos um ponto $f$ na curva, {localizado} 
entre $a$ e $e$. Dividimos o tri\^angulo $(a,b,c)$ 
em quatro tri\^angulos (curvilineares) $(a,b,f)$, $(b,f,e)$, $(b,e,c)$ e $(a,f,e)$. 
Enfim, no \'ultimo caso (figura \ref{subdivi I} (iii)), suponha que o ponto de {sa\'ida} 
seja o v\'ertice $c$, escolhemos um ponto $f$ na curva, localizado entre $a$ e $c$. 
Dividimos o tri\^angulo $(a,b,c)$ em tr\^es tri\^angulos (curvilineares) $(a,b,f)$, $(b,f,c)$ e $(a,f,c)$.

\begin{figure} [H] 
 \begin{tikzpicture}  [scale= 0.35]
  
\draw (-4,0) node[left]{$a$}   --  (0,6) node[left]{$b$} -- (4,0) node[right]{$c$} -- (-4,0);
\draw[thick,red] plot[smooth] coordinates {(-5,-1)(-4,0)(0,2) (2,3) (4.5,5)};     
\node at (-4.85,-0.8) [rotate=240, red, very thick]  {$\vartriangleleft$};
\coordinate [label=above:$d$] (d) at (2,3);
\coordinate (u) at (-4,0);
 \foreach \point in {u,d}
    \fill [black,opacity=.5] (\point) circle (3pt);
       \draw (4.2,4.5) node[red,below] {$\Gamma$} ;
    \draw (0,-0.5) node[below] {$\rm (i)$} ;
\end{tikzpicture}
\qquad
\begin{tikzpicture}  [scale= 0.35]
  
\draw (-4,0) node[left]{$a$}   --  (0,6) node[left]{$b$} -- (4,0) node[right]{$c$} -- (-4,0);
\draw[thick,red] plot[smooth] coordinates {(-5,-1)(-4,0)(-1,1.5) (1.5,0) (2.2,-1)};     
\node at (-4.87,-0.8) [rotate=240, red, very thick]  {$\vartriangleleft$};
\draw [thick,blue](-1,1.5) -- (0,6) -- (1.5,0);
\coordinate (u) at (-4,0);
\coordinate [label=below:$e$] (e) at (1.5,0);
\coordinate [label=below:$f$] (f) at (-1,1.5);
 \foreach \point in {u,e,f}
    \fill [black,opacity=.5] (\point) circle (3pt);
   \draw (2.6,0) node[red,below] {$\Gamma$} ;
       \draw (0,-0.5) node[below] {$\rm (ii)$} ;
\end{tikzpicture}
\qquad
\begin{tikzpicture}  [scale= 0.35]
  
\draw (-4,0) node[left]{$a$}   --  (0,6) node[left]{$b$} -- (4,0) node[right]{$c$} -- (-4,0);
\draw[thick,red] plot[smooth] coordinates {(-5,-1)(-4,0)(0.8,1.8) (4,0) (5,-1)};    
\node at (-4.9,-0.8) [rotate=240, red, very thick]  {$\vartriangleleft$}; 
\draw [thick,blue] (0,6) -- (0.8,1.8);
\coordinate (u) at (-4,0);
\coordinate (v) at (4,0);
\coordinate [label=below:$f$] (f) at (0.8,1.8);
 \foreach \point in {u,v,f}
    \fill [black,opacity=.5] (\point) circle (3pt);
\draw (-1.4,1.65) node[red] {$\Gamma$} ;
       \draw (0,-0.5) node[below] {$\rm (iii)$} ;
\end{tikzpicture}

\caption{Sub-divis\~oes II.}\label{subdivi I} 
\end{figure}

Observamos que, nos tr\^es casos acima, {a escolha}  
de sub-triangula\c c\~ao n\~ao \'e {\'unica}, 
mas a soma 
$n_0 - n_1 + n_2$ permanece inalterada independentemente da escolha. 

Se o ponto de entrada da curva $\Gamma$  em $\sigma$ estiver localizado em um lado, 
denotamos por $d$  o ponto de entrada e por $(a,c)$ o lado de $\sigma$ que cont\'em $d$. 
O pr\'oximo ponto de {sa\'ida} de $\Gamma$  pode ser 
seja em um outro lado que $(a,c)$ (por exemplo $(a,b)$), seja no mesmo lado $(a,c)$, seja um 
v\'ertice. 

\begin{figure} [H] 

 \begin{tikzpicture}  [scale= 0.35]
  
\draw (-4,0) node[left]{$a$}   --  (0,6) node[left]{$b$} -- (4,0) node[right]{$c$} -- (-4,0);
\draw [thick,red](-1,-2) arc (-10:60: 6 );
\node at (-0.87,-0.8) [rotate=270, red, very thick]  {$\vartriangleleft$};
\coordinate [label=above:$e$] (e) at (-2.2, 2.8);
\coordinate (f) at (-1, 0);
\coordinate [label=above:$d$] (d) at (-0.7, 0);
\draw [thick,blue](e) -- (4,0);
 \foreach \point in {e,f}
    \fill [black,opacity=.5] (\point) circle (3pt);
\draw (-3.5,3.3) node[red] {$\Gamma$} ;
    \draw (0.8,-0.5) node[below] {$\rm (i)$} ;
\end{tikzpicture}
\quad
 \begin{tikzpicture}  [scale= 0.35]
  
  \draw (-4,0) node[left]{$a$}   --  (0,6) node[left]{$b$} -- (4,0) node[right]{$c$} -- (-4,0);
  
\draw[thick,red] plot[smooth] coordinates {(-2.6,-1)(-2,0) (0,1.5) (1.8,0) (2.6,-1)};     
\node at (-2.5,-0.8) [rotate=240, red, thick]  {$\vartriangleleft$}; 
\draw [thick,blue](-2,0) -- (0,6) -- (1.8,0);
\draw [thick,blue](0,6) -- (0,1.5);
\coordinate [label=below:$d$] (d) at (-2, 0);
\coordinate [label=below:$f$] (f) at (0,1.5);
\coordinate [label=below:$e$] (e) at (1.8, 0);
 \foreach \point in {d,e,f}
    \fill [black,opacity=.5] (\point) circle (3pt);
   \draw (3,0) node[red,below] {$\Gamma$} ;
\draw (0,-0.5) node[below] {$\rm (ii)$} ;
\end{tikzpicture}
\quad
\begin{tikzpicture}  [scale= 0.35]
  
\draw (-4,0) node[left]{$a$}   --  (0,6) node[left]{$b$} -- (4,0) node[above]{$c$} -- (-4,0);
\draw[thick,red] plot[smooth] coordinates {(-2.2,-1)(-1.5,0) (1,1.5) (4,0)(5,-1)};
\node at (-2.13,-0.8) [rotate=240, red, thick]  {$\vartriangleleft$}; 
\draw [thick,blue](-1.5,0) -- (0,6) -- (1,1.5);
\coordinate (v) at (4,0);
\coordinate [label=below:$d$] (d) at (-1.5,0);
\coordinate [label=below:$f$] (f) at (1,1.5);
 \foreach \point in {v,d,f}
    \fill [black,opacity=.5] (\point) circle (3pt);
\draw (4,0) node[red,below] {$\Gamma$} ;
       \draw (0.8,-0.5) node[below] {$\rm (iii)$} ;
\end{tikzpicture}
\quad
\begin{tikzpicture}  [scale= 0.35]
  
\draw (-4,0) node[left]{$a$}   --  (0,6) node[left]{$b$} -- (4,0) node[right]{$c$} -- (-4,0);
\draw[thick,red] plot[smooth] coordinates {(-2.3,-1)(-1.5,0) (0.8,2) (0,6)(-0.1,6.3)};
\node at (-2.13,-0.8) [rotate=240, red, thick]  {$\vartriangleleft$}; 
\coordinate (v) at (0,6);
\coordinate [label=below:$d$] (d) at (-1.5,0);
 \foreach \point in {v,d}
    \fill [black,opacity=.5] (\point) circle (3pt);
\draw (0,2) node[red] {$\Gamma$} ;
       \draw (0.8,-0.5) node[below] {$\rm (iv)$} ;
\end{tikzpicture}

\caption{Sub-divis\~oes III. }\label{subdivi II}
\end{figure}

Na primeira situa\c c\~ao (figura \ref{subdivi II} (i)),
definimos (por exemplo) uma sub-triangula\c c\~ao do tri\^angulo $(a,b,c)$ como sendo 
formada dos tri\^angulos (curvilineares) $(a,d,e)$, $(c,e,d)$ e $(c,e,b)$.

Na segunda situa\c c\~ao, por exemplo se a curva $\Gamma$  entra e sai pelos ponto $d$ e $e$ localizados no mesmo lado $(a,c)$, escolhemos um ponto $f$ na curva 
localizado entre $d$ e $e$ (figura \ref{subdivi II} (ii)). 
Definimos uma sub-triangula\c c\~ao do tri\^angulo $(a,b,c)$ como sendo 
formada de cinco tri\^angulos (curvilineares) $(a,b,d)$, $(b,d,f)$, $(b,f,e)$, $(b,e,c)$ e $(d,f,e)$. 

As duas \'ultimas situa\c c\~oes (figura \ref{subdivi II} (iii) e (iv))  s\~ao semelhantes aos casos da figura 
\ref{subdivi I} (ii) e (i)  respectivamente.

Observamos que, nos quatro casos, a escolha de sub-triangula\c c\~ao n\~ao \'e \'unica, 
mas a soma 
$n_0 - n_1 + n_2$ permanece inalterada independentemente da escolha. 

Os tri\^angulos que tem um lado comum com $\sigma$ est\~ao  divididos da mesma maneira.

O processo continua para todos os $2$-simplexos 
 para os quais a curva $\Gamma$  passa. 
Eles est\~ao em n\'umero finito, mesmo depois da subdivis\~ao. 
\end{proof} 
\goodbreak

\subsection {Caso da esfera.} 
J\'a providenciamos  uma prova do Teorema \ref{Euler's Theorem} 
usando  o m\'etodo de Cauchy. No entanto, damos aqui duas provas de mais
do fato que a soma $n_0 - n_1 + n_2$ n\~ao depende da triangula\c c\~ao
da esfera. A primeira prova usa cortes, o que d\'a uma representa\c c\~ao planar da figura \ref{2spheres},
 e a segunda prova usa  a proje\c c\~ao estereogr\'afica da esfera sobre um plano. 
 Nos dois casos, usamos tamb\'em o teorema \ref{teo1}. 

\medskip 

\noindent {\bf 1. Usando os cortes.} \label{caso_esfera}
\\
Seja ${\mathcal T}$ uma triangula\c c\~ao da esfera $\Sp^2$. Consideramos quatro curvas sobre a esfera: 
o equador $E$ (ou qualquer paralelo) e tr\^es curvas $\gamma_1$, $\gamma_2$ e $\gamma_3$,  
indo do p\'olo norte $N$ at\'e a curva $E$ ao longo de meridianos. Denotamos por 
$a_i$, $i=1,2,3$, os pontos $\gamma_i \cap E$ de chegada das curvas $\gamma_i$. 
Estas curvas podem ser, por exemplo,  parte norte dos meridianos $0^\circ$, $120^\circ$ leste 
e $120^\circ$ oeste 
(veja figura \ref{2spheres} (1)). Usando o lema \ref{lemageneral} 
 podemos construir uma subdivis\~ao ${\mathcal T}'$ 
da triangula\c c\~ao ${\mathcal T}$
 compat\'ivel com as quatro curvas, isto \'e, tal que a uni\~ao das quatro curvas seja 
um subcomplexo de ${\mathcal T}'$. O lema \ref{lemageneral} mostra que a soma $n_0 - n_1 + n_2$ 
permanece a mesma.

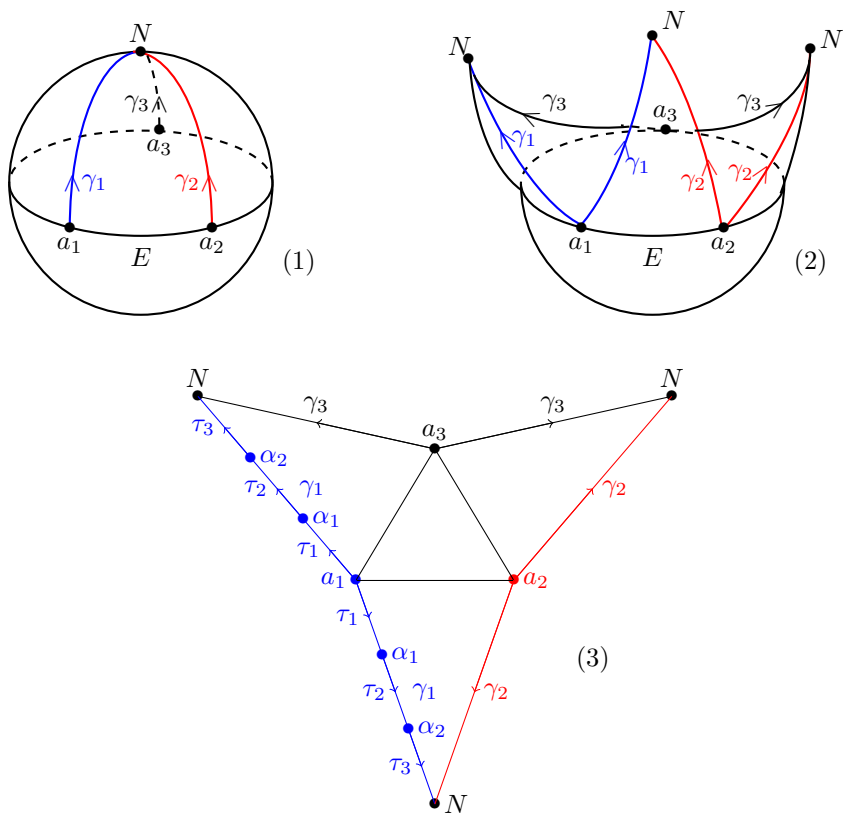
\begin{figure} [H] 
 \begin{tikzpicture}  [scale= 0.35]
 
 \draw[thick] (0,0) circle (5);
 \draw [thick](-5,0) arc (180:360: 5  and 2 );
  \draw [thick,dashed](5,0) arc (0:180: 5  and 2 );

 \draw [blue,thick](0,5) arc (90:180: 2.7  and 6.7 );
 \draw [red,thick](2.7,-1.7) arc (0:90: 3  and 6.7 );
  \draw [thick,dashed](0.7,2.2) arc (0:45: 1.8  and 4 );
  
  \filldraw (0,5) node {$\bullet$} node [above] {$N$};
  \filldraw (-2.7,-1.7) node {$\bullet$} node [below] {$a_1$};
    \filldraw (2.7,-1.7) node {$\bullet$} node [below] {$a_2$};
    \filldraw (0.7,2) node {$\bullet$} node [below] {$a_3$};
\filldraw (0,-2.1)   node [below] {$E$};

\node at (-2.6, 0)[blue]{$\wedge$};
\node at (-2.6,0)[blue,right]{$\gamma_1$};
\node at (2.6, 0)[red]{$\wedge$};
\node at (2.6,0)[red,left]{$\gamma_2$};
\node at (0.7,3){$\wedge$};
\node at (0.7,3)[left]{$\gamma_3$};

\node at (7,-3)[left]{$(1)$};

\end{tikzpicture}
\qquad \qquad
 \begin{tikzpicture}  [scale= 0.35]
 
 \draw[thick] (-5,0) arc (180:360: 5  and 5);
 
 \draw [thick](-5,0) arc (180:360: 5  and 2 );
  \draw [thick,dashed](5,0) arc (0:180: 5  and 2 );

\draw [rotate=-10][blue,thick](-2.5,-2.2)  arc (300:366: 3  and  8 ) node[sloped,midway]{$>$};
 \draw  [rotate=10] [red,thick](2.3,-2) arc (0:57: 3  and 9 ) node[sloped,near start]{$<$};
 \node at (-1.4,0.65)[blue,right]{$\gamma_1$};
  \node at (2.4,0)[red,left]{$\gamma_2$};
  
\draw[rotate=320] [red,thick](3.2,0.3) arc (0:70: 3  and  8 ) ; 
\draw[rotate=340] [thick](4.7,1.3) arc (0:40: 3 and  8 ) ;
\draw [thick](1.6,2) arc (270:360: 4.39 and  3 )node[sloped,midway]{$>$};
\draw [thick,dashed](0.7,2) -- (1.2,2);

\node at (4.2,0.4)[red] [rotate=240]  {$<$};
  \node at (4.2,0.4)[red,left]{$\gamma_2$};
\node at (3.7,2.4)[above]{$\gamma_3$};

  \draw[rotate=20] [blue,thick](-3,-0.65) arc (250:180: 3  and  8 ) ; 
\draw[thick](-5,-0.25) arc (250:187: 3  and  6 );
\draw[rotate=5]  [thick](-6.4,4.7) arc (190: 270: 5.8 and  3 ) node[sloped,midway]{$<$};
\draw [thick,dashed](0.7,2) -- (-1.2,2.2);

\node at (-5.62,1.7)[blue] [rotate=300]  {$<$};
\node at (-3.7,2.4)[above]{$\gamma_3$};
 \node at (-5.7,1.7)[blue,right]{$\gamma_1$};
 
  \filldraw (-2.7,-1.7) node {$\bullet$} node [below] {$a_1$};
    \filldraw (2.7,-1.7) node {$\bullet$} node [below] {$a_2$};
    \filldraw (0.5,2) node {$\bullet$} node [above] {$a_3$};
\filldraw (0,-2.1)   node [below] {$E$};
\filldraw (0,6.2)  node [right] {$N$};
\filldraw (6,5.5)   node [right] {$N$};
\filldraw (-6.5,5.2)   node [left] {$N$};
\filldraw (0,5.6)  node {$\bullet$};
\filldraw (6,5.1)   node {$\bullet$};
\filldraw (-7,4.7)   node {$\bullet$};

\node at (7,-3)[left]{$(2)$};

\end{tikzpicture}
\bigskip
\vglue0.2truecm
 \begin{tikzpicture}  [scale= 0.35]

\coordinate (N1) at  (9,7);
\filldraw (N1) node {$\bullet$} node [above] {$N$};
\coordinate (N2) at  (-9,7);
\filldraw (N2) node {$\bullet$} node [above] {$N$};
\coordinate (N3) at  (0,-8.5);
\filldraw (N3) node {$\bullet$} node [right] {$N$};

\coordinate (a1) at  (-3,0);
\filldraw (a1) node [blue] {$\bullet$} node [blue,left] {$a_1$};
\coordinate (a2) at  (3,0);
\filldraw (a2) node [red]{$\bullet$} node [red,right] {$a_2$};
\coordinate (a3) at  (0,5);
\filldraw (a3) node {$\bullet$} node [above] {$a_3$};

\draw (N2)--(a3) -- (N1);
\draw (a1) --(a3)  -- (a2)  -- (a1);
\draw [blue] (N3) -- (a1) --(N2);
\draw [red](N1) -- (a2) -- (N3);

\coordinate (c1) at  (-5,2.33);
\coordinate (c2) at  (-7,4.66);
\filldraw (c1) node [blue] {$\bullet$} node [blue,right] {$\alpha_1$};
\filldraw (c2) node [blue] {$\bullet$} node [blue,right] {$\alpha_2$};

\coordinate (d1) at (-4,1.165);
\filldraw [blue,->] (a1)-- (d1);
\coordinate (d2) at (-6,3.495);
\filldraw [blue,->] (c1)-- (d2);
\coordinate (d3) at (-8,5.825);
\filldraw [blue, ->] (c2)-- (d3);

\coordinate (e1) at  (-2,-2.833);
\coordinate (e2) at  (-1,-5.666);
\filldraw (e1) node [blue]{$\bullet$} node [blue,right] {$\alpha_1$};
\filldraw (e2) node [blue]{$\bullet$} node [blue,right] {$\alpha_2$};

\coordinate (f1) at (-2.5,-1.416);
\filldraw [blue,->] (a1)-- (f1);
\coordinate (f2) at (-1.5,-4.242);
\filldraw [blue,->] (e1)-- (f2);
\coordinate (f3) at (-0.5,-7.08);
\filldraw [blue,->] (e2)-- (f3);

\filldraw (d1)  node [blue,left] {$\tau_1$};
\filldraw (d2) node [blue,left] {$\tau_2$};
\filldraw (d3)  node [blue,left] {$\tau_3$};
\filldraw (f1)  node [blue,left] {$\tau_1$};
\filldraw (f2) node [blue,left] {$\tau_2$};
\filldraw (f3) node [blue,left] {$\tau_3$};

\coordinate  (g) at (-4.5,6);
\coordinate  (h) at (4.5,6);
\coordinate  (j) at (6,3.5);
\coordinate  (k) at (1.5,-4.25);

\filldraw [->] (a3)-- (g);
\filldraw [->] (a3)-- (h);
\filldraw [red,->] (a2)-- (j);
\filldraw [red,->] (a2)-- (k);

\filldraw (g)  node [above] {$\gamma_3$};
\filldraw (h) node [above] {$\gamma_3$};
\filldraw (j)  node [red,right] {$\gamma_2$};
\filldraw (k)  node [red,right] {$\gamma_2$};
\filldraw (-5.5,3.5) node [blue,right] {$\gamma_1$};
\filldraw (-1.2,-4.25) node [blue,right] {$\gamma_1$};

\node at (7,-3)[left]{$(3)$};
\end{tikzpicture}
\caption{Representa\c c\~ao planar da esfera }\label{2spheres}
\end{figure}

Agora, cortamos a esfera ao longo das curvas $\gamma_1$, $\gamma_2$ e $\gamma_3$,  
de tal modo que 
podemos projetar a figura no plano contendo o equador (veja figura \ref{2spheres} (2)). 
Obtemos uma representa\c c\~ao planar $K$ da esfera homeomorfa a um disco 
e com identifica\c c\~oes dos simplexos na borda $K_0$ correspondentes aos cortes feitos. 
A triangula\c c\~ao de $K$ corresponde simplexo por simplexo 
\`a triangula\c c\~ao ${\mathcal T}'$.

O teorema \ref{teo1} diz que a soma $n_0^K - n_1^K + n_2^K$ pela triangula\c c\~ao $K$ \'e igual \`a soma 
$n_0^{K_0} - n_1^{K_0} +1$. 
Usando as mesmas nota\c c\~oes na esfera e na representa\c c\~ao planar, 
o v\'ertice $N$ \'e comum \`as todas as curvas $\gamma_i$ e deve ser identificado. Fora
deste v\'ertice, em cada curva $\gamma_i$ temos um n\' umero de v\'ertices igual ao n\'umero de 
arestas (veja figura \ref{2spheres} (3) onde desenhamos um exemplo de sub-triangula\c c\~ao da aresta 
$\gamma_1$ com tr\^es v\'ertices e tr\^es arestas). Ent\~ao pela borda da representa\c c\~ao planar temos 
$n_0^{K_0} - n_1^{K_0} = +1$  e pela triangula\c c\~ao $\mathcal T$ da esfera temos 
$$n_0 - n_1 + n_2 = +2.$$

\medskip

\noindent {\bf 2. Usando a proje\c c\~ao estereogr\'afica.}

Seja $\mathcal T$ uma triangula\c c\~ao da esfera $\Sp^2$ e $\sigma=(a,b,c)$ o tri\^angulo (aberto)
da triangula\c c\~ao contendo o p\'olo norte.
A primeira etapa \'e fazer a proje\c c\~ao estereogr\'afica $p: \Sp^2 \setminus \sigma \to Q$ da esfera removida o tri\^angulo $\sigma$ 
onde $Q$ \'e o plano tangente da esfera ao p\'olo sul. Denotamos $U = \Sp^2 \setminus \sigma$. 
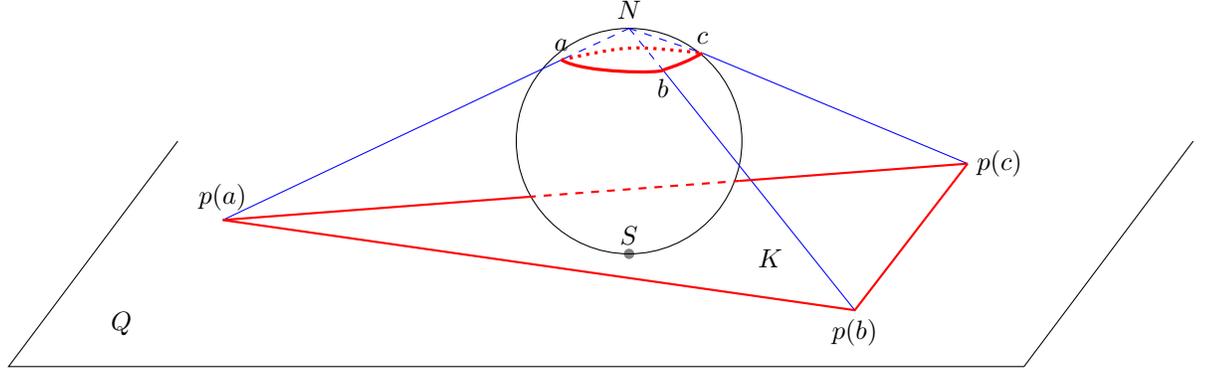
\begin{figure}[H] 
\begin{tikzpicture}  [scale= 1.5]    

\draw (0,1) circle (1cm);
\coordinate [label=above:$a$]  (a) at (-0.6,1.72);
\coordinate [label=below:$b$]  (b) at (0.3,1.63);
\coordinate [label=above:$c$]  (c) at (0.65,1.78);
\coordinate [label=above:$p(a)$]  (A) at (-3.6, 0.3);
\coordinate [label=below:$p(b)$]  (B) at (2, -0.5);
\coordinate [label=right:$p(c)$]  (C) at (3, 0.8);
\coordinate [label=above:$N$]  (N) at (0,2);
\coordinate [label=above:$S$]  (S) at (0,0);xfdisco

\coordinate [label=above:$K$]  (T) at (1.25,-0.2);
\coordinate [label=above:$Q$]  (P) at (-4.5,-0.8);
 \foreach \point in {S}
    \fill [black,opacity=.5] (\point) circle (1.3pt);

\draw (-4,1)--(-5.5,-1) -- (3.5, -1) -- (5,1);
\draw[dashed,blue] (N) -- (a);
\draw [blue] (a) --(A);
\draw[dashed,blue] (N) -- (b);
\draw [blue] (b) -- (B);
\draw[dashed,blue] (N) -- (c);
\draw [blue] (c) -- (C);

\draw [very thick, red] (a) .. controls (-0.4,1.6) and (0.3,1.6) .. (b);
\draw [very thick,red] (b) .. controls (0.5,1.7) .. (c);
\draw [very thick,dotted,red] (a) .. controls (0,1.85) .. (c);

\coordinate (D1) at (-1.2,0);
\coordinate(D) at (intersection of A--C and N--D1);
\coordinate (E1) at (1.8,-0.5);
\coordinate (E) at (intersection of A--C and N--E1);
\draw [thick,red](A)--(B)--(C);
\draw [thick,red](A)--(D);
\draw [thick,dashed,red] (D)--(E);
\draw [thick,red](E)--(C);
 \end{tikzpicture}
\caption{A proje\c c\~ao estereogr\'afica.}\label{Estereo}
\end{figure}

A proje\c c\~ao  estereogr\'afica  de $U = \Sp^2 \setminus \sigma$ 
sobre o plano $Q$ \'e um tri\^angulo $K$ 
 homeomorfa a um disco $D$, e, neste caso, sem identifica\c c\~oes na borda $K_0$.
A triangula\c c\~ao da esfera induz uma triangula\c c\~ao do tri\^angulo $K$,
cada simplexo de $K$ sendo a imagem de um simplexo de ${\mathcal T}$ pela proje\c c\~ao $p$. 
Ent\~ao a quantidade $n_0 - n_1 + n_2$ satisfaz :
$$n_0^{\mathcal T} - n_1^{\mathcal T} + n_2^{\mathcal T}  = n_0^U - n_1^U + n_2^U + 1,$$
onde ``+1'' \'e a contribu\c c\~ao  do tri\^angulo $\sigma$. 

Agora, pelo teorema \ref{teo1}, temos 
$$n_0^U - n_1^U + n_2^U = n_0^K - n_1^K + n_2^K 
= n_0^{K_0} - n_1^{K_0} + 1 = +1,$$
observando que $K_0$ \'e a borda do tri\^angulo $K$ 
com $n_0^{K_0} = n_1^{K_0} = 3$. 

Finalmente, obtemos 
$$n_0^{\mathcal T} - n_1^{\mathcal T} + n_2^{\mathcal T} = +2$$
para qualquer triangula\c c\~ao da esfera. 

\subsection{Caso do toro.}

Seja $\mathcal T$ uma triangula\c c\~ao qualquer do toro $\T = \Sp^1 \times \Sp^1$.

Escolhamos um meridiano $M=\Sp^1 \times \{0\}$ e um paralelo $P=\{0\} \times \Sp^1$. 
Eles se cruzam em um ponto $a = \{0\} \times\{0\}$. Observamos que,
sem perda de generalidade, podemos escolh\^e-los transversalmente a todas as arestas
(simplexos de dimens\~ao 1) de ${\mathcal T}$. 
Definimos uma sub-triangula\c c\~ao ${\mathcal T}'$ de 
$\mathcal T$, da  seguinte maneira (veja figura \ref{letore}): 
Cada tri\^angulo $\sigma$ (simplexo de dimens\~ao 2) de ${\mathcal T}$
que encontra $M$ ou $P$ est\'a subdividido de maneira que $\sigma \cap M$ (ou  $\sigma \cap P$) seja uma  
aresta de ${\mathcal T}'$. Conclu\'\i mos a subdivis\~ao obtendo uma sub-triangula\c c\~ao ${\mathcal T}'$ 
de ${\mathcal T}$. 
O lema \ref{lemageneral} mostra que a soma $n_0 - n_1 + n_2$ permanece a mesma
para ${\mathcal T}$ e ${\mathcal T}'$.

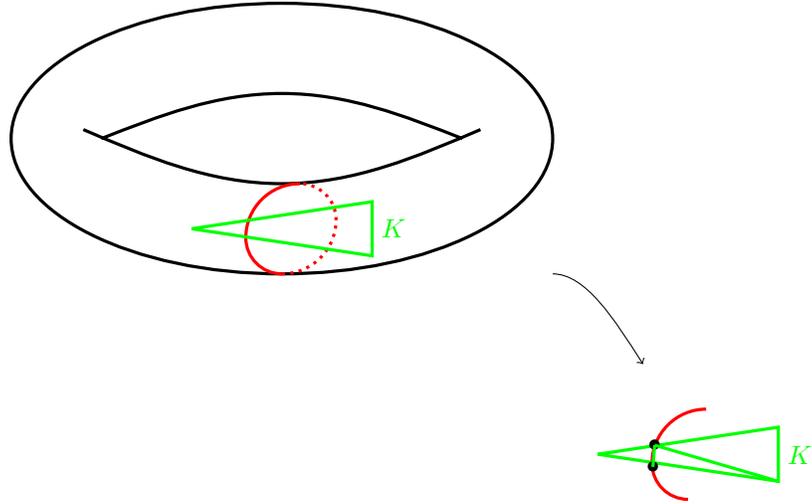
\begin{figure}[H]
\begin{tikzpicture}[scale=1.2]

\draw[very thick] (3, 1.5) ellipse (3cm and 1.5 cm);
\draw [very thick] (0.8, 1.6) sin (3, 1) cos (5.2, 1.6);
\draw [very thick] (1, 1.5) sin  (3, 2) cos (5, 1.5);); 

\draw[dotted, very thick, red] (3,0) arc (-90:0:0.6) arc (0:90: 0.4cm) ;
\draw[very thick,red] (3,0) arc (90:0:-0.4) arc (0:-90: -0.6cm) ;

\draw[green, very thick] (2, 0.5) -- (4, 0.2) -- (4, 0.8) -- (2, 0.5); 
\node at (4, 0.5) [right] {\textcolor{green}{$K$}};

\draw[very thick,red] (7.5,-2.5) arc (90:0:-0.4) arc (0:-90: -0.6cm) ;
\draw[green, very thick] (6.5, -2) -- (8.5, -2.3) -- (8.5, -1.7) -- (6.5, -2); 
\node at (7.11, -2.14) {$\bullet$}; 
\node at (7.13, -1.9) {$\bullet$};

\draw [green, very thick] (7.11, -2.14) -- (7.13, -1.9); 
\draw [green, very thick] (7.13, -1.9) -- (8.5, -2.3); 
\node at (8.5, -2) [right] {\textcolor{green}{$K'$}};


\draw [->] (6,0) cos ( 7, -1); 
\end{tikzpicture}
\caption{Sub-triangula\c c\~ao $K'$ de $K$. }
\label{letore}
\end{figure}

\begin{figure}[H]
\begin{tikzpicture}[scale=0.6]

 \draw [blue, thick]  (0, 0) -- (6,0) ;
 \draw [red, thick]  (6,0) -- (6,6) ; 
 \draw [red, thick] (0,6) -- (0,0); 
\draw [blue, thick]  (6,6) --  (0,6) ;
 
\coordinate  (0) at (0,0);
\coordinate  (10) at (0,1.5);
\coordinate  (11) at (0,2.8);
\coordinate  (12) at (0,3.5);
\coordinate  (13) at (0,6);

\coordinate  (9) at (6,0);
\coordinate  (8) at (6,1.5);
\coordinate  (19) at (6,2.8);
\coordinate  (20) at (6,3.5);
\coordinate  (21) at (6,6);

\coordinate  (1) at (2,0);
\coordinate  (4) at (3,0);
\coordinate  (6) at (4.8,0);

\coordinate  (22) at (2,6);
\coordinate  (23) at (3,6);
\coordinate  (24) at (4.8,6);

\coordinate  (2) at (1.2,2);
\coordinate  (3) at (2.4,2);
\coordinate  (5) at (3.7,2.5);
\coordinate  (7) at (5,2.2);

\coordinate  (14) at (1.5,5);
\coordinate  (15) at (2.5,4);
\coordinate  (16) at (4,3.8);
\coordinate  (17) at (4.6,3);
\coordinate  (18) at (4.9,5);

\draw [green, thick]  (0) -- (2) -- (1) -- (3) -- (2);
\draw [green, thick]  (5) -- (4) -- (3) -- (5) -- (6) -- (7) -- (5);
\draw [green, thick] (9) -- (7) -- (8);
\draw [green, thick] (10) -- (2) -- (11);
\draw [green, thick] (12) -- (14) -- (13);
\draw [green, thick] (14) -- (22) -- (15) -- (14) -- (2) -- (15) -- (3);
\draw [green, thick] (21) -- (18) -- (24) -- (16) -- (18) -- (17) -- (16) -- (23) -- (15) -- (16) -- (5) -- (15);
\draw [green, thick] (18) -- (20) -- (17) -- (19) -- (7) -- (17) -- (5);

\node at (1,0) [blue] {>};
\node at (1,0)[below]{$a$};
\node at (2.5,0) [blue] {>};
\node at (2.5,0)[below]{$b$};
\node at (4,0)[blue] {>};
\node at (4,0)[below]{$c$};
\node at (5.5,0)[blue] {>};
\node at (5.5,0)[below]{$d$};

\node at (1,6)[blue] {>};
\node at (1,6)[above]{$a$};
\node at (2.5,6)[blue] {>};
\node at (2.5,6)[above]{$b$};
\node at (4,6)[blue] {>};
\node at (4,6)[above]{$c$};
\node at (5.5,6)[blue] {>};
\node at (5.5,6)[above]{$d$};

\node at (0, 0.9)[red]{$\wedge$};
\node at (0,0.9)[left]{$e$};
\node at (0, 2.2)[red]{$\wedge$};
\node at (0,2.2)[left]{$f$};
\node at (0, 3.1)[red]{$\wedge$};
\node at (0,3.1)[left]{$g$};
\node at (0, 4.7)[red]{$\wedge$};
\node at (0,4.7)[left]{$h$};

\node at (6, 0.9)[red]{$\wedge$};
\node at (6,0.9)[right]{$e$};
\node at (6, 2.2)[red]{$\wedge$};
\node at (6,2.2)[right]{$f$};
\node at (6, 3.1)[red]{$\wedge$};
\node at (6,3.1)[right]{$g$};
\node at (6, 4.7)[red]{$\wedge$};
\node at (6,4.7)[right]{$h$};

\node at (0, 0)[left]{$A$};
\node at (6, 0)[right]{$A$};
\node at (0, 6)[left]{$A$};
\node at (6, 6)[right]{$A$};

\end{tikzpicture}
\caption{Uma representa\c c\~ao planar $K$ do toro.}  \label{Toroplanar} 
\end{figure}
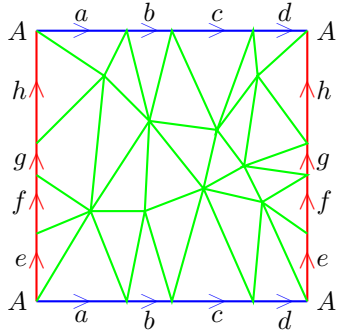

Agora, cortando  o toro ao longo $M$ e $P$, obtemos uma representa\c c\~ao planar $K$ do toro 
 homeomorfa a um disco, com identifica\c c\~oes correspondante ao corte. 
 Esta representa\c c\~ao \'e feita da mesma maneira da representa\c c\~ao planar 
 na figura \ref{o toro} mas triangulada 
como a triangula\c c\~ao $K$ correspondente a sub-triangula\c c\~ao ${\mathcal T}'$  
de ${\mathcal T}$.  
 Portanto, pelo lema \ref{lemageneral}, 
o n\'umero $n_0 - n_1 + n_2$ permanece inalterada. 
 Usando o teorema \ref{teo1}, temos 
 $$n_0^{\mathcal T} - n_1^{\mathcal T} + n_2^{\mathcal T} = 
 n_0^{{\mathcal T}'}  - n_1^{{\mathcal T}'}  + n_2^{{\mathcal T}'}  = 
 n_0^K - n_1^K + n_2^K =
 n_0^{K_0} - n_1^{K_0} + 1.$$
 Agora, sendo as identifica\c c\~oes na borda 
 $K_0$, temos: $n_1^{K_0} = n_0^{K_0} +1$. Finalmente, 
 $$n_0^{\mathcal T} - n_1^{\mathcal T} + n_2^{\mathcal T} = 0$$ 
 para qualquer triangula\c c\~ao do toro. 

A mesma prova vale para o toro de qualquer g\^enero $g$. 
Por exemplo, tomemos $g=3$. 
Sendo uma triangula\c c\~ao ${\mathcal T}$, fixamos um ponto $x$ e, ao redor de cada ``buraco'' do toro, fixamos um ``meridiano'' 
($a_1$, $a_2$, $a_3$ nas figuras \ref{toreg}) e um  ``paralelo'' ({$b_1$, $b_2$, $b_3$} nas figuras \ref{toreg}). 
Constru\'imos uma sub-triangula\c c\~ao ${\mathcal T}'$ de ${\mathcal T}$ pelo mesmo m\'etodo que no caso  
do toro $T$. Cortando o toro de g\^enero $g$ ao longo dos meridianos e dos paralelos, 
obtemos uma representa\c c\~ao planar $K$ do toro de g\^enero $3$, 
que \'e um pol\'igino triangulado pela imagem da 
triangula\c c\~ao ${\mathcal T}'$.

\begin{figure}

 \begin{tikzpicture}  [scale= 0.20]
 
 \draw[thick] (0,0) ellipse (12 and 6);
  \draw [thick](-8,0.6) arc (170:370: 3  and 1 );
  
  \draw[very thick,blue] plot[smooth] coordinates {(0,-3)(-0.3,0)(-0.9,2.3) (-2.5,3.5) (-4.5,3.8)(-6,3.6)(-7.5,3.1) (-9.5,1.5)
   (-9.5,0)(-7.7,-2)(-4,-2.8) (0,-3)};  
   \draw[very thick,blue] plot[smooth] coordinates {(0,-3)(0.3,0)(0.9,2.3) (2.5,3.5) (4.5,3.8)(6,3.6)(7.5,3.1) (9.5,1.5)
   (9.5,0)(7.7,-2)(4,-2.8) (0,-3)};  
   
   \draw[very thick,red] plot[smooth] coordinates {(0,-3)(-1.25,-1.6)(-2.5,-0.8)(-4.7,-0.6)};  
    \draw[very thick,dashed,red] plot[smooth] coordinates {(-4.7,-0.6)(-5.5,-1) (-5.8,-1.3)(-6,-1.6)(-6.4,-3)(-6,-4.4)(-5.5,-5.2)
    (-4.7,-5.5)};  
       \draw[very thick,red] plot[smooth] coordinates {(-4.7,-5.5)(-4,-5.5) (-3,-5.35)(-2,-4.8)(-1.4,-4.3)(0,-3)};  
       \draw[very thick,red] plot[smooth] coordinates {(0,-3)(1.25,-1.6)(2.5,-0.8)(4.7,-0.6)};  
    \draw[very thick,red,dashed] plot[smooth] coordinates {(4.7,-0.6)(5.5,-1) (5.8,-1.3)(6,-1.6)(6.4,-3)(6,-4.4)(5.5,-5.2)
    (4.7,-5.5)};  
       \draw[very thick,red] plot[smooth] coordinates {(4.7,-5.5)(4,-5.5) (3,-5.35)(2,-4.8)(1.4,-4.3)(0,-3)};  
   
   \coordinate (a1) at (-4.5,3.8);
   \filldraw (a1) node [blue] {$<$} node [above] {$a_1$};
     \coordinate (ra1) at (-4,-2.8);
      
        \coordinate (b1) at (-2.5,-0.8);
   \filldraw (b1) node {\textcolor{red}{$>$}} node [below] {$b_1$};
     \coordinate (rb1) at (-3,-5.35);

 \coordinate (a2) at (4.5,3.8);
   \filldraw (a2) node  [blue]  {$>$} node [above] {$a_2$};
     \coordinate (ra2) at (4,-2.8);
      
        \coordinate (b2) at (2.5,-0.8);
   \filldraw (b2) node {\textcolor{red}{$<$}} node [below] {$b_2$};
     \coordinate (rb2) at (3,-5.35);

    \draw [thick](-2.45,-0.15) arc (17:163: 2.7  and 1 );
 \draw [thick](2,0.6) arc (170:370: 3  and 1 );
    \draw [thick](7.65,0) arc (17:168: 2.7  and 1 );
      
   \end{tikzpicture}
   \qquad 
   \begin{tikzpicture}  [scale= 0.3]

  \coordinate  (a) at (-2,5) ;
  \coordinate  (b) at (2,5) ;
  \coordinate  (c) at (5,2) ;
  \coordinate  (d) at (5,-2) ;
  \coordinate  (e) at (2,-5) ;
 \coordinate  (f) at (-2,-5) ;
 \coordinate  (g) at (-5,-2) ;
 \coordinate  (h) at (-5,2) ;
 
 \draw [blue] (a)--(b) node[blue,sloped,midway] {$>$};
 \node at (0,5) [above]{$a_1$};
 
  \draw [red] (b)--(c) node[sloped,midway] {$>$};
  \node at (3.6,3.6) [right]{$b_1$};
  
 \draw [blue]  (c)--(d) node[sloped,midway] {$<$};
 \node at (5,0) [right]{$a_1$};  
  
    \draw [red] (d)--(e) node[sloped,midway] {$>$};
   \node at (3.6,-3.6) [right]{$b_1$}; 
   
 \draw [blue]  (e)--(f) node[sloped,midway] {$<$};
   \node at (0,-5) [below]{$a_2$};   
   
  \draw[red]  (f)--(g) node[sloped,midway] {$<$};
    \node at (-3.6,-3.6) [left]{$b_2$};
    
 \draw [blue]  (g)--(h) node[sloped,midway] {$<$};
      \node at (-5,0) [left]{$a_2$};     
      
        \draw[red]  (h)--(a) node[sloped,midway] {$<$};
     \node at (-3.6,3.6) [left]{$b_2$};
  
 \end{tikzpicture}

\hglue 1truecm

\begin{tikzpicture}  [scale= 0.17]

 \draw[thick] (14,0) arc (00:155 : 14 and 8.1);
\draw[rotate=120] [thick](9.5,8) arc (0:180: 14 and  8 ) ;
 \draw[rotate=240] [thick](18,10) arc (20:150: 14.5 and  9 ) ;
 
  
  \draw[blue,very thick] plot[smooth] coordinates {(0,-3)(-0.3,0)(-0.9,2.3) (-2.5,3.5) (-4.5,3.8)(-6,3.6)(-7.5,3.1) (-9.5,1.5)
   (-9.5,0)(-7.7,-2)(-4,-2.8) (0,-3)};  
   \draw[blue,very thick] plot[smooth] coordinates {(0,-3)(0.3,0)(0.9,2.3) (2.5,3.5) (4.5,3.8)(6,3.6)(7.5,3.1) (9.5,1.5)
   (9.5,0)(7.7,-2)(4,-2.8) (0,-3)};  
     \draw[blue,very thick] plot[smooth] coordinates {(0,-3)(-3,-7)(-4,-9)(-5.5,-14) (-5,-16)(-2.5,-16.8)(0,-17)
     (2.5,-16.8)(5,-16)(5.5,-14) (4,-9)(3,-7)(0,-3) };   
   
   \draw[very thick,red] plot[smooth] coordinates {(0,-3)(-1.25,-1.6)(-2.5,-0.8)(-4.7,-0.6)};  
    \draw[very thick,dashed,red] plot[smooth] coordinates {(-4.7,-0.6)(-6,-1) (-6.5,-1.2)
    (-9,-2.3)(-13,-4.4)(-14,-5.2)};  
   \draw[very thick,red] plot[smooth] coordinates {(-14,-5.2)(-9.5,-5.6)  (-3,-4.6)  (0,-3)};  
       \draw[very thick,red] plot[smooth] coordinates {(0,-3)(1.25,-1.6)(2.5,-0.8)(4.7,-0.6)};  
    \draw[very thick,dashed,red] plot[smooth] coordinates {(4.7,-0.6)(6,-1) (6.5,-1.2)
    (9,-2.3)(13,-4.4)(14,-5.2)};  
       \draw[very thick,red] plot[smooth] coordinates {(14,-5.2)(9.5,-5.6)  (3,-4.6)  (0,-3)};  
    \draw[very thick,red] plot[smooth] coordinates {(0,-3)(3.5,-6)(7,-11)(7.5,-14)(7.5,-16)(7.2,-17.2)};  
       \draw[very thick,red,dashed] plot[smooth] coordinates {(7.2,-17.2)(6.5,-17.5) (6,-17.8)(5,-17.8)(4,-17.2)
       (3,-16)(2.3,-14)};  
    \draw[very thick,red] plot[smooth] coordinates {(0,-3)(0.2,-7)(1,-11)  (2.3,-14)};  
    
   \coordinate (a1) at (-4.5,3.8);
   \filldraw (a1) node [blue] {$<$} node [above] {$a_1$};
     \coordinate (ra1) at (-4,-2.8);
      
        \coordinate (b1) at (-9.5,-5.6); 
           \filldraw (b1) node {\textcolor{red}{$<$}} node [below] {$b_1$};
     \coordinate (rb1) at (-3,-5.35);

 \coordinate (a2) at (4.5,3.8);
   \filldraw (a2) node [blue] {$>$} node [above] {$a_2$};
     \coordinate (ra2) at (4,-2.8);
      
        \coordinate (b2) at (9.5,-5.6);
   \filldraw (b2) node {\textcolor{red}{$>$}} node [below] {$b_2$};
     \coordinate (rb2) at (3,-5.35);
 
 \coordinate (a3) at (-5.5,-14);
   \filldraw (a3) node [blue] {$\vee$} node [left] {$a_3$};
     \coordinate (ra2) at (4,-2.8);
 
         \coordinate (b3) at (7.5,-13);
   \filldraw (b3) node {\textcolor{red}{$\wedge$}} node [right] {$b_3$};
     \coordinate (rb2) at (3,-5.35);

    \draw [thick](-8,0.6) arc (170:370: 3  and 1 );    
    \draw [thick](-2.45,-0.15) arc (17:163: 2.7  and 1 );
 \draw [thick](2,0.6) arc (170:370: 3  and 1 );
    \draw [thick](7.65,0) arc (17:168: 2.7  and 1 );
        \draw [thick](-3,-13) arc (170:370: 3  and 1 );    
    \draw [thick](2.55,-13.8) arc (17:163: 2.7  and 1 );
  \end{tikzpicture}
  \qquad\qquad
 \begin{tikzpicture}  [scale= 0.55]
 
  \coordinate  (a) at (-1,4) ;
  \coordinate  (b) at (1,4) ;
  \coordinate  (c) at (3,3) ;
  \coordinate  (d) at (4,1) ;
  \coordinate  (e) at (4,-1) ;
 \coordinate  (f) at (3,-3) ;
 \coordinate  (g) at (1,-4) ;
 \coordinate  (h) at (-1,-4) ;
  \coordinate  (j) at (-3,-3) ;
 \coordinate  (k) at (-4,-1) ;
 \coordinate  (l) at (-4,1) ;
 \coordinate  (m) at (-3,3) ;

 \draw [blue]  (a)--(b) node[sloped,midway] {$>$};
 \node at (0,4) [above]{$a_1$};
 
  \draw [red] (b)--(c) node[sloped,midway] {$>$};
  \node at (2,3.8) [right]{$b_1$};
  
 \draw [blue]  (c)--(d) node[sloped,midway] {$<$};
 \node at (4.2,1.7) [above]{$a_1$};  
  
    \draw [red] (d)--(e) node[sloped,midway] {$<$};
   \node at (4,0) [right]{$b_1$}; 
   
 \draw [blue]  (e)--(f) node[sloped,midway] {$<$};
   \node at (4.2,-1.7) [below]{$a_2$};   
   
  \draw[red]  (f)--(g) node[sloped,midway] {$<$};
    \node at (2,-3.8) [right]{$b_2$};
    
 \draw [blue] (g)--(h) node[sloped,midway] {$>$};
      \node at (0,-4) [below]{$a_2$};     
      
        \draw[red]  (h)--(j) node[sloped,midway] {$>$};
     \node at (-2,-3.6) [left]{$b_2$};
  
 \draw [blue]   (j)--(k) node[sloped,midway] {$<$};
  \node at(-4.2,-1.7) [below]{$a_3$};
  
   \draw[red] (k)--(l) node[sloped,midway] {$>$};
 \node at (-4,0) [left]{$b_3$};  
  
 \draw [blue]   (l)--(m) node[sloped,midway] {$<$};
   \node at (-4.2,1.7) [above]{$a_3$}; 
   
     \draw[red] (m)--(a) node[sloped,midway] {$<$};
   \node at (-2,3.8) [left]{$b_3$};   
   
  \end{tikzpicture}
 \caption{Representa\c c\~ao planar do toro de g\^enero $g$, para $g=2,3$. 
Ao fim de ter uma figura mais leve, n\~ao 
 desenhamos aqui a triangula\c c\~ao tal que cada aresta  $a_i$ e $b_i$ 
 seja dividida em ao menos tr\^es arestas (veja observa\c c\~ao  \ref{remarque}). }\label{toreg}
\end{figure}
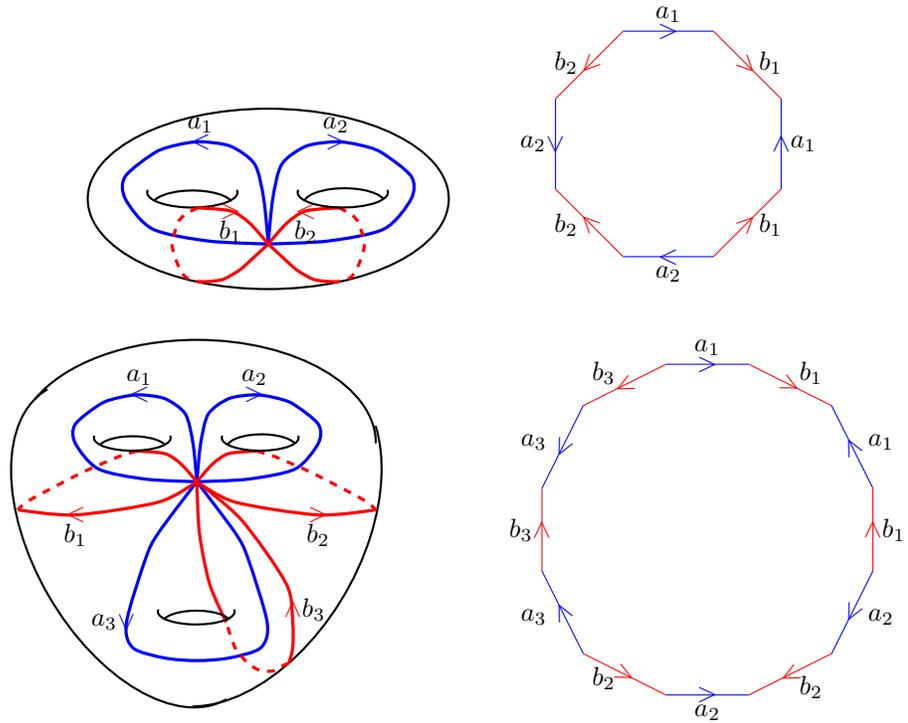

\subsection{Caso do plano projetivo.}

O plano projetivo \'e representado por uma esfera cujos pontos diametralmente opostos est\~ao 
identificados. Uma triangula\c c\~ao do plano projetivo \'e dada por uma triangula\c c\~ao da esfera 
sim\'etrica relativamente ao centro da esfera.  Consideramos a esfera em $\R^3$ (veja
figura \ref{figura17}) e seja ${\mathcal T}$ uma tal triangula\c c\~ao do plano projetivo. 

A interse\c c\~ao de ${\mathcal T}$ com o equador define uma triangula\c c\~ao $J$ do equador, sim\'etrica relativamente ao centro da esfera. Definimos uma sub-triangula\c c\~ao ${\mathcal T}'$ de ${\mathcal T}$ 
tal que simplexos de $J$ sejam simplexos de ${\mathcal T}'$ e que ${\mathcal T}'$ seja sim\'etrica
relativamente ao centro da esfera (veja figura \ref{projo}). 
Pelo lema \ref{subtriang}, a quantidade $n_0 - n_1 + n_2$ \'e a mesma para ${\mathcal T}$ e ${\mathcal T}'$.

\begin{figure}[H]
\begin{tikzpicture}[scale=0.3]

\draw[thick] (0,0) circle (5);
\draw [thick,red](-5,0) arc (180:270: 5  and 2 );
\draw [thick,blue](0,-2) arc (270:360: 5  and 2 );

\draw [thick,dashed,red](5,0) arc (0:90: 5  and 2 );
\draw [thick,dashed,blue](0,2) arc (90:180: 5  and 2 );

\draw [green](1, -4) cos (3.5, -3) sin (3, 0) cos (1, -4);
\draw[dashed, thick,green] (-1, 4) cos (-3.5, 3) sin (-3, 0) cos (-1, 4);

\draw[thick] (18,0) circle (5);

\draw [thick,red](13,0) arc (180:270: 5  and 2 );
\draw [thick,blue](18,-2) arc (270:360: 5  and 2 );

\draw [thick,dashed,red](23,0) arc (0:90: 5  and 2 );
\draw [thick,dashed,blue](18,2) arc (90:180: 5  and 2 );

\draw [green](19, -4) cos (21.5, -3) sin (21, 0) cos (19, -4);
\draw[dashed, thick,green] (17, 4) cos (14.5, 3) sin (15, 0) cos (17, 4);

\draw[thick, green]  (19.7,-1.95) sin (21.5, -3);
\draw[thick, green, dashed]  (16.3,1.95) sin (14.5, 3);

\end{tikzpicture} 
\caption{Triangula\c c\~ao ${\mathcal T}$ do plano projetivo \quad  
Sub-triangula\c c\~ao ${\mathcal T}'$ de ${\mathcal T}$ }\label{projo}
\end{figure}
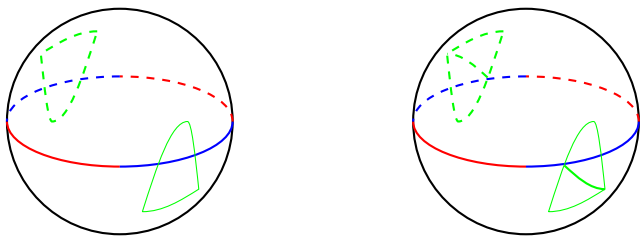

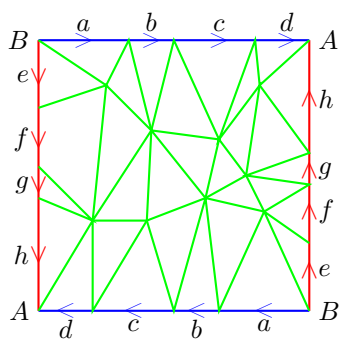
\begin{figure}[H]
\begin{tikzpicture}[scale=0.6]

 \draw [blue, thick]  (0, 0) -- (6,0) ;
 \draw [red, thick]  (6,0) -- (6,6) ; 
 \draw [red, thick] (0,6) -- (0,0); 
\draw [blue, thick]  (6,6) --  (0,6) ;
 
\coordinate  (0) at (0,0);
\coordinate  (25) at (0,4.5);
\coordinate  (26) at (0,3.2);
\coordinate  (27) at (0,2.5);
\coordinate  (13) at (0,6);

\coordinate  (9) at (6,0);
\coordinate  (8) at (6,1.5);
\coordinate  (19) at (6,2.8);
\coordinate  (20) at (6,3.5);
\coordinate  (21) at (6,6);

\coordinate  (28) at (4,0);
\coordinate  (4) at (3,0);
\coordinate  (29) at (1.2,0);

\coordinate  (22) at (2,6);
\coordinate  (23) at (3,6);
\coordinate  (24) at (4.8,6);

\coordinate  (2) at (1.2,2);
\coordinate  (3) at (2.4,2);
\coordinate  (5) at (3.7,2.5);
\coordinate  (7) at (5,2.2);

\coordinate  (14) at (1.5,5);
\coordinate  (15) at (2.5,4);
\coordinate  (16) at (4,3.8);
\coordinate  (17) at (4.6,3);
\coordinate  (18) at (4.9,5);

\draw [green, thick]  (0) -- (2) -- (29) -- (3) -- (2);
\draw [green, thick]  (5) -- (4) -- (3) -- (5) -- (28) -- (7) -- (5);
\draw [green, thick] (9) -- (7) -- (8);
\draw [green, thick] (27) -- (2) -- (26);
\draw [green, thick] (25) -- (14) -- (13);
\draw [green, thick] (14) -- (22) -- (15) -- (14) -- (2) -- (15) -- (3);
\draw [green, thick] (21) -- (18) -- (24) -- (16) -- (18) -- (17) -- (16) -- (23) -- (15) -- (16) -- (5) -- (15);
\draw [green, thick] (18) -- (20) -- (17) -- (19) -- (7) -- (17) -- (5);

\node at (5,0) [blue] {<};
\node at (5,0)[below]{$a$};
\node at (3.5,0) [blue] {<};
\node at (3.5,0)[below]{$b$};
\node at (2.1,0)[blue] {<};
\node at (2.1,0)[below]{$c$};
\node at (0.6,0)[blue] {<};
\node at (0.6,0)[below]{$d$};

\node at (1,6)[blue] {>};
\node at (1,6)[above]{$a$};
\node at (2.5,6)[blue] {>};
\node at (2.5,6)[above]{$b$};
\node at (4,6)[blue] {>};
\node at (4,6)[above]{$c$};
\node at (5.5,6)[blue] {>};
\node at (5.5,6)[above]{$d$};

\node at (0, 5.2)[red]{$\vee$};
\node at (0,5.2)[left]{$e$};
\node at (0, 3.8)[red]{$\vee$};
\node at (0,3.8)[left]{$f$};
\node at (0, 2.8)[red]{$\vee$};
\node at (0,2.8)[left]{$g$};
\node at (0, 1.25)[red]{$\vee$};
\node at (0,1.25)[left]{$h$};

\node at (6, 0.9)[red]{$\wedge$};
\node at (6,0.9)[right]{$e$};
\node at (6, 2.2)[red]{$\wedge$};
\node at (6,2.2)[right]{$f$};
\node at (6, 3.1)[red]{$\wedge$};
\node at (6,3.1)[right]{$g$};
\node at (6, 4.7)[red]{$\wedge$};
\node at (6,4.7)[right]{$h$};

\node at (0, 0)[left]{$A$};
\node at (6, 0)[right]{$B$};
\node at (0, 6)[left]{$B$};
\node at (6, 6)[right]{$A$};

\end{tikzpicture}

\caption{Uma representa\c c\~ao planar $K$ do plano projetivo.}  \label{Projplanar2} 
\end{figure}

Agora, a proje\c c\~ao ortogonal 
da semiesfera norte sobre o plano $0xy$ fornece uma triangula\c c\~ao $K$ do disco $D$ 
de raio $1$, centrado na origem, cuja triangula\c c\~ao da borda \'e sim\'etrica
relativamente  \`a origem. Com a identifica\c c\~ao dos simplexos  na  borda, a 
quantidade $n_0 - n_1$ vale $0$ na borda. Ent\~ao  pelo teorema \ref{teo1}, temos
$$n_0^{\mathcal T} - n_1^{\mathcal T} + n_2^{\mathcal T} = +1$$ 
 para qualquer triangula\c c\~ao do plano projetivo. 

\subsection{Caso da garrafa de Klein.}

O caso da garrafa de Klein \'e an\'alogo ao caso do toro. Sendo uma triangula\c c\~ao 
${\mathcal T}$ da garrafa de 
Klein, escolhemos um meridiano $M$ e um paralelo $P$ e definimos uma sub-triangula\c c\~ao 
${\mathcal T}'$ de ${\mathcal T}$ compat\'ivel com $M$ e $P$. 
O corte ao longo de $M$ e $P$ permite obter uma representa\c c\~ao planar $K$ 
da garrafa de Klein como um ret\^angulo triangulado com identifica\c c\~oes 
na borda. Na borda, temos 
$n_1  = n_0  +1$. Ent\~ao, pelo teorema \ref{teo1}, temos
$$n_0^{\mathcal T} - n_1^{\mathcal T} + n_2^{\mathcal T} = 0$$ 
 para qualquer triangula\c c\~ao da garrafa de Klein. 

\begin{figure}[H]
 \includegraphics[height=2.5cm]{Klein.jpg} \qquad  \qquad 
  \includegraphics[height=2.5cm]{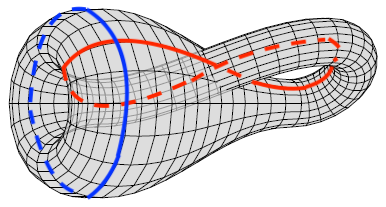} 
\caption{A garrafa de Klein e os cortes.}\label{garrafaKlein}
\end{figure}
 
\begin{figure}[H]
\begin{tikzpicture}[scale=0.6]

 \draw [red, thick]  (0, 0) -- (6,0) ;
 \draw [blue, thick]  (6,0) -- (6,6) ; 
 \draw [blue, thick] (0,6) -- (0,0); 
\draw [red, thick]  (6,6) --  (0,6) ;
 
\coordinate  (0) at (0,0);
\coordinate  (25) at (0,4.5);
\coordinate  (26) at (0,3.2);
\coordinate  (27) at (0,2.5);
\coordinate  (13) at (0,6);

\coordinate  (9) at (6,0);
\coordinate  (8) at (6,1.5);
\coordinate  (19) at (6,2.8);
\coordinate  (20) at (6,3.5);
\coordinate  (21) at (6,6);

\coordinate  (1) at (2,0);
\coordinate  (4) at (3,0);
\coordinate  (6) at (4.8,0);

\coordinate  (22) at (2,6);
\coordinate  (23) at (3,6);
\coordinate  (24) at (4.8,6);

\coordinate  (2) at (1.2,2);
\coordinate  (3) at (2.4,2);
\coordinate  (5) at (3.7,2.5);
\coordinate  (7) at (5,2.2);

\coordinate  (14) at (1.5,5);
\coordinate  (15) at (2.5,4);
\coordinate  (16) at (4,3.8);
\coordinate  (17) at (4.6,3);
\coordinate  (18) at (4.9,5);

\draw [green, thick]  (0) -- (2) -- (1) -- (3) -- (2);
\draw [green, thick]  (5) -- (4) -- (3) -- (5) -- (6) -- (7) -- (5);
\draw [green, thick] (9) -- (7) -- (8);
\draw [green, thick] (27) -- (2) -- (26);
\draw [green, thick] (25) -- (14) -- (13);
\draw [green, thick] (14) -- (22) -- (15) -- (14) -- (2) -- (15) -- (3);
\draw [green, thick] (21) -- (18) -- (24) -- (16) -- (18) -- (17) -- (16) -- (23) -- (15) -- (16) -- (5) -- (15);
\draw [green, thick] (18) -- (20) -- (17) -- (19) -- (7) -- (17) -- (5);

\node at (1,0) [red] {>};
\node at (1,0) [below]{$a$};
\node at (2.5,0) [red] {>};
\node at (2.5,0)[below]{$b$};
\node at (4,0)[red] {>};
\node at (4,0)[below]{$c$};
\node at (5.5,0)[red] {>};
\node at (5.5,0)[below]{$d$};

\node at (1,6)[red] {>};
\node at (1,6)[above]{$a$};
\node at (2.5,6)[red] {>};
\node at (2.5,6)[above]{$b$};
\node at (4,6)[red] {>};
\node at (4,6)[above]{$c$};
\node at (5.5,6)[red] {>};
\node at (5.5,6)[above]{$d$};

\node at (0, 5.2)[blue]{$\vee$};
\node at (0,5.2)[left]{$e$};
\node at (0, 3.8)[blue]{$\vee$};
\node at (0,3.8)[left]{$f$};
\node at (0, 2.8)[blue]{$\vee$};
\node at (0,2.8)[left]{$g$};
\node at (0, 1.25)[blue]{$\vee$};
\node at (0,1.25)[left]{$h$};

\node at (6, 0.9)[blue]{$\wedge$};
\node at (6,0.9)[right]{$e$};
\node at (6, 2.2)[blue]{$\wedge$};
\node at (6,2.2)[right]{$f$};
\node at (6, 3.1)[blue]{$\wedge$};
\node at (6,3.1)[right]{$g$};
\node at (6, 4.7)[blue]{$\wedge$};
\node at (6,4.7)[right]{$h$};

\node at (0, 0)[left]{$A$};
\node at (6, 0)[right]{$A$};
\node at (0, 6)[left]{$A$};
\node at (6, 6)[right]{$A$};

\end{tikzpicture}

\caption{Uma  representa\c c\~ao planar $K$ da garrafa de  Klein.} 
\end{figure}
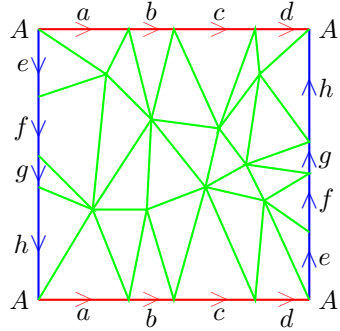

\subsection{Caso do toro pin\c cado.} \label{pinc\'e}

Nem {toda superf\'\i cie} 
com singularidades admitem uma representa\c c\~ao planar. 
O toro pin\c cado \'e um exemplo de superf\'\i cie com singularidades que admite 
tal representa\c c\~ao planar.  
\begin{figure}[H]
\includegraphics[height=2.5cm]{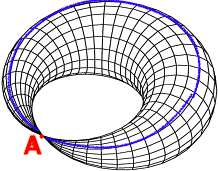} 
\caption{O toro pin\c cado}\label{toropincado}
\end{figure}

O toro pin\c cado \'e uma superf\'\i cie de parametriza\c c\~ao cartesiana em $\R^3$:
\begin{equation*}
\begin{cases}
x = \left( a + b \;cos(v) \, cos\left(\frac{1}{2} u\right) \right) cos(u) \\
y = \left( a + b\; cos(v) \, cos\left(\frac{1}{2} u\right) \right) sin(u) \\
z= b\; sin(v)\,  cos\left(\frac{1}{2} u\right)  
 \end{cases}
\end{equation*}
onde $a$ e $b$ s\~ao os grande e pequeno raios respectivamente. 

Seja ${\mathcal T}$ uma triangula\c c\~ao qualquer do toro pin\c cado. 
Escolhemos um ``paralelo''  $P$ passando pelo ponto singular $A$
do toro pin\c cado. 
Definimos uma sub-triangula\c c\~ao ${\mathcal T}'$ de ${\mathcal T}$ compat\'ivel com $P$. 
O corte ao longo de $P$ permite obter uma representa\c c\~ao planar $K$ 
do toro pin\c cado (figura \ref{pincado})  com identifica\c c\~oes 
na borda $K_0$. Observamos que o ponto $A$ esta duplicado. Na borda, temos 
$n_0^{K_0} - n_1^{K_0} = 0$. Ent\~ao, pelo teorema \ref{teo1}, temos
 $$n_0^{\mathcal T} - n_1^{\mathcal T} + n_2^{\mathcal T} = +1$$
 para qualquer triangula\c c\~ao do toro pin\c cado. 

\begin{figure}[H]
\begin{tikzpicture}[scale=0.8]

 \draw [blue, thick]  (1,0) -- (-2,3) -- (1,6) ;
 \draw [blue, thick]  (1,6) -- (4,6) ;
\draw [blue, thick]  (1,0) --  (4,0) ;
 \draw [blue, thick]  (4,0) -- (7,3) -- (4,6) ;
 
 \node at (-1.5,3.5) [blue,rotate=40]  {$>$};
  \node at (-0.5,4.5) [blue,rotate=40]  {$>$};
   \node at (0.5,5.5) [blue,rotate=40]  {$>$};
    \node at (1.5,6) [blue]  {$>$};
     \node at (2.7,6) [blue]  {$>$};
      \node at (3.75,6) [blue]  {$>$};
       \node at (4.8,5.2) [blue,rotate=-40]  {$>$};
        \node at (5.8,4.2) [blue,rotate=-40]  {$>$};
         \node at (6.5,3.5) [blue,rotate=-40]  {$>$};
 \node at (-1.5,2.5) [blue,rotate=-40]  {$>$};
  \node at (-0.5,1.5) [blue,rotate=-40]  {$>$};
   \node at (0.5,0.5) [blue,rotate=-40]  {$>$};
    \node at (1.5,0) [blue]  {$>$};
     \node at (2.7,0) [blue]  {$>$};
      \node at (3.75,0) [blue]  {$>$};
       \node at (4.8,0.8) [blue,rotate=40]  {$>$};
        \node at (5.8,1.8) [blue,rotate=40]  {$>$};
         \node at (6.5,2.5) [blue,rotate=40]  {$>$};
                  
 \node at (-2.1,3.1) [red,above]  {$\bf A$};
 \node at (7.1,3.1) [red,above]  {$\bf A$};
  \node at (-2,3) [red]  {$\bullet$};
 \node at (7,3) [red]  {$\bullet$};

 \coordinate  [label=above:$a$](9) at (-1,4);
 \coordinate [label=above:$b$] (13) at (0,5);
\coordinate  [label=above:$c$](26) at (1,6); 
\coordinate  [label=above:$d$](22) at (2,6);
\coordinate  [label=above:$e$](23) at (3.5,6);
\coordinate [label=above:$f$] (24) at (4,6);
\coordinate [label=above:$g$] (18) at (5.5,4.5);
\coordinate  [label=above:$h$](20) at (6,4);

\coordinate  [label=below:$a$](27) at (-1,2);
\coordinate  [label=below:$b$] (0) at (0,1);
\coordinate [label=below:$c$] (30) at (1,0);
\coordinate  [label=below:$d$](1) at (2,0);
\coordinate [label=below:$e$] (4) at (3.5,0);
\coordinate  [label=below:$f$](6) at (4,0);
\coordinate [label=below:$g$] (8) at (5.5,1.5);
\coordinate  [label=below:$h$](19) at (6,2);

\coordinate  (25) at (0,3);
\coordinate   (21) at (5.5,3);

\coordinate  (2) at (1.2,2);
\coordinate  (3) at (2.4,2);
\coordinate  (5) at (3.7,2.5);
\coordinate  (7) at (5,2.2);

\coordinate (14) at (1.5,5);
\coordinate (15) at (2.5,4);
\coordinate (16) at (4,3.8);
\coordinate  (17) at (4.6,3);

\draw [green, thick]  (-2,3) -- (25) ;
\draw [green, thick]  (27) -- (25) -- (9);
\draw [green, thick]  (27) -- (25) -- (9);
\draw [green, thick]  (13) -- (25);
\draw [green, thick]  (21) -- (7,3);
\draw [green, thick] (20) -- (21) -- (17);
\draw [green, thick] (19) -- (21) -- (7);
\draw [green, thick] (6) -- (7) -- (5);
\draw [green, thick]  (2) -- (30);
\draw [green, thick]  (14) -- (26);
\draw [green, thick]  (0) -- (2) -- (1) -- (3) -- (2);
\draw [green, thick]  (5) -- (4) -- (3) -- (5) -- (6); 
\draw [green, thick] (7) -- (8);
\draw [green, thick] (27) -- (2) -- (25);
\draw [green, thick] (25) -- (14) -- (13);
\draw [green, thick] (14) -- (22) -- (15) -- (14) -- (2) -- (15) -- (3);
\draw [green, thick] (24) -- (16) -- (18) -- (17) -- (16) -- (23) -- (15) -- (16) -- (5) -- (15);
\draw [green, thick] (20) -- (17);
\draw [green, thick] (19) -- (7) -- (17) -- (5);

\end{tikzpicture}
\caption{Uma  representa\c c\~ao planar $K$ do toro pin\c cado. } \label{pincado}
\end{figure}
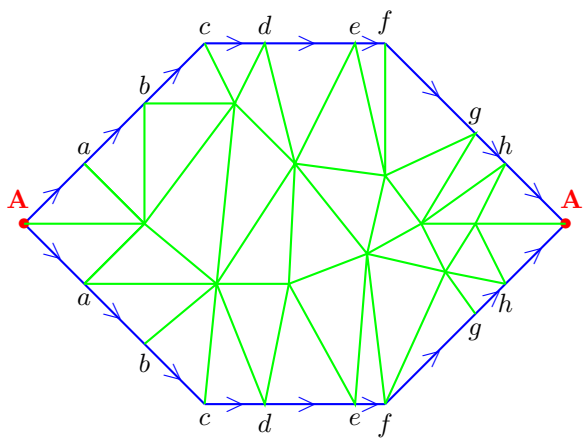



\begin{thebibliography}{20}



\bibitem [Bra]{Bra} H.R. Brahana, 
{\it Systems of circuits on two-dimensional manifolds}, 
Ann. Math. 23 (1921), 144-168. 


\bibitem[BT]{BT}  J.-P. Brasselet e Thuy, N.T.B., {\it Teorema de Poincar\'e-Hopf}, C.Q.D.-
Revista {Electr\^onica} Paulista de Matem\'atica. Bauru, v. 16, pp. 134--162, dez. 2019.

\bibitem [BW] {BW} G.E. Bredon and  J. W.  Wood, {\it Non-orientable surfaces in  orientable 3-manifolds, } 
Inventiones Mathematicae, Vol. 7, no 2, 1969, pp. 83---110. 

\bibitem [BM]{BM} H. Bruggesser and P. Mani, {\it Shellable decompositions of cells and spheres,} 
Math. Scand. 29 (1971), 197-205.

\bibitem [Ca1] {Ca1} A.L. Cauchy, {\it Recherches sur les poly\`edres }, {$\rm I^{er}$} M\'emoire lu \`a a la premi\`ere classe de l'Institut, en F\'evrier 1811, par A.L. Cauchy, Ing\'enieur des Ponts et Chauss\'ees. Journal de l'Eocle Polytechnique, Volulme 9, (1913) 68 -- 86. 

\bibitem [Ca2] {Ca2} A.L. Cauchy, {\it Sur les polygones et  les poly\`edres, } Second M\'emoire  lu \`a a la premi\`ere classe de l'Institut, le 20 Janvier 1812, par A.L. Cauchy, Ing\'enieur des Ponts et Chauss\'ees. Journal de l'Eocle Polytechnique, Volulme 9, (1913) 87 -- 98.

\bibitem[Cel]{Cel} Cellule de G\'eom\'etrie, {\it Relation d'Euler et les poly\`edres sans ``trou'',}  Equipe de la Cellule de G\'eom\'etrie de l'UMONS et de la Haute Ecole de la Communaut\'e Fran\c caise en Hainaut. \hfill\break
\url{http://www.cellulegeometrie.eu/documents/pub/pub_12.pdf}

\bibitem [CR]{CR} R. Courant and H. Robbins, {\it What Is Mathematics?: An Elementary Approach to Ideas and Methods}, London: Oxford University Press. 1941. 
2nd edition, with additional material by Ian Stewart. New York: Oxford University Press. 1996. 

\bibitem [dJ1] {dJ1} E. de Jonqui\`eres, {\it Note sur un M\'emoire de Descartes longtemps in\'edit et sur les titres de son auteur \`a la priorit\'e d'une d\'ecouverte dans la th\'eorie des poly\`edres.} Compte-Rendus des S\'eances de l'Acad\'emie des Sciences 110, 1890, pp. 261--266.

\bibitem [dJ2] {dJ2} E. de Jonqui\`eres, {\it \'Ecrit 
posthume de Descartes sur les poly\`edres},  Compte-Rendus des S\'eances de l'Acad\'emie des Sciences 110, 1890, pp. 315--317.

\bibitem [dJ3] {dJ3} E. de Jonqui\`eres, {\it 
\'Ecrit posthume de Descartes. De solidorum elementis.  Texte latin (original et revu) suivi d'une traduction fran\c caise avec notes}, M\'emoire pr\'esent\'e \`a l'Acad\'emie des sciences dans sa s\'eance du 31 mars 1890, s\'erie 45, pp. 325--379.

\bibitem [De] {De} R. Descartes, {\it De solidorum elementis}, \OE uvres de Descartes, 
vol 10, Paris 1908, pp. 256--276.

\bibitem[DH]{DH} M. Dehn und P. Heegard, {\it Analysis Situs}, Enzyklop\"adie der Mathematischen Wissenschaften, III AB 3, 153--220 (Leipzig, Teubner, 1907). 

\bibitem[vD]{vD} W. von Dyck, {\it Beitr\"age zur Analysis Situs I}, Mathematische Annalen, 32, pp. 457--512 (1888). 

\bibitem [Epp]{Epp} D. Eppstein, {\it Geometry Junkyard Twenty Proofs of Euler Formula}, \hfill\break
\url{https://www.ics.uci.edu/~eppstein/junkyard/euler/}

\bibitem [Eu1]{Eu1} L. Euler, {\it Leonard Euler und Christian Goldbach, Briefwechsel, 1729-1764,} Heransgegeben und eingeleitet von A.P. ${\rm Ju{\check s}kevi{\check c}}$ und E. Winter, Berlin Akademie-Verlag 1965.

\bibitem [Eu2]{Eu2} L. Euler, {\it Elementa doctrinae solidorum,} 1750. Novi Comment. Acad. Sc. Petrop. t.4 
Saint P\'etersbourg, 1758, pp. 72--93. 

\bibitem [Eu3]{Eu3} L. Euler, {\it  Demonstratio nonnularum 
insignium proprietatum, quibus solida hedris planis inclusa sunt praedita,} 1751. 
Novi Comment. Acad. Sc. Petrop. t.4  Saint P\'etersbourg, 1758, pp. 94--108. 

\bibitem [Eve]{Eve} H. Eves, {\it An introduction to the History of Mathematics,} 1953. 
Saunders Series (6th edition 1990).

\bibitem [Fo] {Fo} L.-A. Foucher de Careil, {\it \OE uvres in\'edites de Descartes pr\'ec\'ed\'ees d'une introduction
sur la m\'ethode}, 2 volumes, Paris, 1859, 1860.

\bibitem [FR]{FR} C. Francese and D. Richardson, {\it The flaw in Euler's proof of his polyhedral formula}, 
The American monthly, vol 114, no 4, april 2007, pp. 286--296. 

\bibitem [GX] {GX} J. H. Gallier and D. Xu, {\it A Guide to the Classification Theorem for Compact Surfaces}, 
 Geometry and Computing Book 9, Springer, 2013.
 
 \bibitem [Ha] {Ha} A. Hatcher, {\it Algebraic Topology}, Cambridge University Press, 2002.
 
 \bibitem [HC] {HC} D. Hilbert und S. Cohn-Vossen, {\it  Anschauliche Geometrie,} Springer, (1932), English version {\it Geometry and the Imagination} AMS Chelsea Publishing, (1952).
 
 \bibitem[Jo]{Jo} C. Jordan, {\it Sur la d\'eformations des surfaces}, Journal de Math\'ematiques Pures et Appliqu\'ees, 2\`eme s\'erie, tome XI, pp. 105--109, 1866.
 
   \bibitem[Kirk]{Kirk} A. Kirk, {\it Euler's polyhedron formula,} Plus Magazine... 
   living mathematics. 1997--2009, Millennium Mathematics Project, University of Cambridge.\hfill\break
\url{https://plus.maths.org/content/os/issue43/features/kirk/index}

 \bibitem[Lak]{Lak} I. Lakatos, {\it Proofs and Refutations,} Cambridge University Press (1976).
 
 \bibitem[Leb]{Leb} H. Lebesgue, {\it Remarques sur les deux premi\`eres d\'emonstrations du 
 th\'eor\`eme d'Euler relatif aux poly\`edres}, Bulletin de la S. M. F., Tome  52 (1924), pp 315-336.
 
 \bibitem[Leg]{Leg}  A. M. Legendre, {\it \'El\'ements de g\'eom\'etrie}, Paris, 1794.
 
\bibitem[Lhu]{Lhu} S.A.J. Lhuilier, {\it M\'emoire sur la poly\'edrom\'etrie, contenant une 
d\'emonstration du th\'eor\`eme d'Euler sur les poly\`edres, et un examen de diverses exceptions auxquelles ce th\'eor\`eme est assujetti,} Gergonne Ann. Math. 3, 1812, p.169. 

\bibitem [Lie] {Lie} W. Lietzmann, {\it Visual Topology}, (translated from the German by 
M. Bruckheimer), American Elsevier, NY 1969, Chatto and Windus, London 1965.

\bibitem[Li]{Li} E. Lima, {\it A caracter\'istica de Euler-Poincar\'e,} 
Matem\'atica Universit\'aria,  Sociedade Mathematica Brasileira, No 1, Junho 1985, pp. 47--62.

\bibitem[Li2]{Li2} E. Lima, {\it O Teorema de Euler sobre Poliedros,} 
Matem\'atica Universit\'aria,  Sociedade Mathematica Brasileira, No 2, Dezembro 1985, pp. 57--74.

\bibitem[Mal1] {Mal1} J. Malkevitch, {\it  Euler's Polyhedral Formula}, Dec. 2004, AMS Feature Column. Monthly essays on mathematical topics. 

\bibitem[Mal2] {Mal2} J. Malkevitch, {\it  Euler's Polyhedral Formula II}, Jan. 2005, AMS Feature Column. Monthly essays on mathematical topics. 

\bibitem[Mal3] {Mal3} J. Malkevitch, {\it Gifts from Euler's Polyhedral Formula}, Graph Theory Notes, York College. \hfill\break
\url{ https://www.york.cuny.edu/~malk/Mymaterials/gifts2.pdf }


\bibitem[Mas]{Mas} W. S. Massey, {\it A Basic Course in Algebraic Topology,}
 New York: Springer-Verlag, 1997. 

\bibitem[Mo]{Mo} A.F. M\"obius, {\it Theorie der elementaren Verwandschaft}, Abh. der K\"on. 
S\"achsische Ges. der Wiss. 15, pp. 18--57 (1863).

\bibitem [Po2] {Po3} H. Poincar\'e, {\it Sur l'Analysis situs,} 
CRAS  vol. 115 (1892), pp 633-636.

\bibitem [Po3] {Po1} H. Poincar\'e, {\it Sur la g\'en\'eralisation d'un th\'eor\`eme
d'Euler relatif aux poly\`edres}, CRAS  vol. 117 (1893), pp. 144-145.

\bibitem [Rad]{Rad} T. Rad\'o, {\it \"Uber den Begriff der Riemannschen Fl\"ache}, Acta Litt. Sci. Szeged. 2 (1925), 101 -- 121. 

\bibitem [Ri]{Ri} D. Richeson, {\it Euler's Gem: The Polyhedron Formula and the Birth of Topology},  
Princeton University Press, 2008.

\bibitem [Sam]{Sam} H. Samelson, {\it In defense of Euler}, Enseign. Math. (2) 42 (3-4), 1996, 377-382

\bibitem[ST]{ST} H. Seifert und W. Threlfall, {\it Lehrbuch der Topologie}. VII,  Leipzig u. Berlin 1934, B. G. Teubner. English version  {\it A Textbook of Topology}, Pure and Applied Mathematics,  
A Series of Monographs and Textbooks, Academic Press, 1980. 

\bibitem[Sin]{Sin} D.A. Singer, {\it Geometry: Plane and Fancy,} Undergraduate Texts in Mathematics, 1998, Springer. 

\bibitem [Zie]{Zie}  G. M. Ziegler, {\it Lectures on polytopes,} Graduate Texts in Mathematics, Vol. 152. 
Springer-Verlag, New York, 1995.

\end{thebibliography}
\end{document}